\newtheorem{theorem}{Theorem}[section]
\newtheorem{lemma}{Lemma}[section]
\newtheorem{corollary}{Corollary}[section]
\theoremstyle{definition}
\newtheorem{definition}{Definition}
\newtheorem{remark}{Remark}[section]
\newtheorem{example}{Example}
\numberwithin{equation}{section}
\def\R{\mathbb R}
\def\A{\mathcal A}
\def\M{\mathcal M}
\def\X{\mathbf {X}}
\def\0{\boldsymbol 0}
\def\data{\X,\allowbreak C_D,\allowbreak C_P,\allowbreak \A_0,\allowbreak \A_1,\allowbreak p}
\newcommand{\CC}{Carnot-Carath\'eodory }
\newcommand{\supp}{\operatorname{supp}}
\newtoks\by
\newtoks\paper
\newtoks\book
\newtoks\jour
\newtoks\yr
\newtoks\pages
\newtoks\vol
\newtoks\publ
\def\name[#1, #2]{#1 #2}
\def\ota{{\hbox{\bf ???}}}
\def\cLear{\by=\ota\paper=\ota\book=\ota\jour=\ota\yr=\ota
\pages=\ota\vol=\ota\publ=\ota}
\def\endpaper{\the\by, \textit{\the\paper},
{\the\jour} \textbf{\the\vol} (\the\yr), \the\pages.\cLear}
\def\endbook{\the\by, \textit{\the\book},
\the\publ, \the\yr.\cLear}
\def\endpap{\the\by, \textit{\the\paper}, \the\jour.\cLear}
\def\endproc{\the\by, \textit{\the\paper}, \the\book, \the\publ,
\the\yr, \the\pages.\cLear}
\begin{document}
%\title[Harnack estimates for degenerate parabolic equations on manifolds]{Non-linear degenerate parabolic equations\\ on manifolds: Harnack estimates\\ derived under minimal
%assumptions}

%\title[Harnack estimates for degenerate quasi-linear parabolic equations]{Harnack estimates for degenerate quasi-linear parabolic equations \\ in Carnot-Caratheodory spaces}
\title[Harnack estimates]{Harnack estimates for degenerate parabolic equations modeled on the subelliptic $p-$Laplacian}
%\author[Avelin]{Benny Avelin}
\address{Benny Avelin\\Department of Mathematics, Uppsala University\\
S-751 06 Uppsala, Sweden}
\email{benny.avelin@math.uu.se}
%}

%\author[Capogna]{Luca Capogna}
\address{Luca Capogna\\Department of Mathematical Sciences, Worcester Polytechnic Institute\\Worcester, MA 01609
}
\email{lcapogna@wpi.edu}
\address{Giovanna Citti\\ Dipartimento di Matematica, Universita di Bologna\\
Bologna, Italy}
\email{citti@unibo.it}
%}

%\author[Nystr\"{o}m]{Kaj Nystr\"{o}m}
\address{Kaj Nystr\"{o}m\\ Department of Mathematics, Uppsala University\\
S-751 06 Uppsala, Sweden}
\email{kaj.nystrom@math.uu.se}

\author{B. Avelin, L. Capogna, G. Citti and K. Nystr{\"o}m}

\maketitle
\begin{abstract}
	\noindent We establish a Harnack inequality for a class of quasi-linear PDE modeled on the prototype 
\begin{equation*}
	 \partial_tu= -\sum_{i=1}^{m}X_i^\ast ( |\X u|^{p-2} X_i u)\end{equation*}
where $p\ge 2$, $ \ \X = (X_1,\ldots, X_m)$ is a system of Lipschitz vector fields defined on a smooth manifold $\M$ endowed with a Borel measure $\mu$, and $X_i^*$ denotes the adjoint of $X_i$ with respect to $\mu$. Our estimates are derived assuming that (i) the control distance $d$ generated by $\X$ induces the same topology on $\M$; (ii) a doubling condition for the $\mu$-measure of $d-$metric balls and (iii) the validity of a Poincar\'e inequality involving $\X$ and $\mu$. Our results extend the recent work in \cite{DiBenedettoGianazzaVespri1}, \cite{K}, to a more general setting
including the model cases of (1) metrics generated by H\"ormander vector fields and Lebesgue measure; (2) Riemannian manifolds with non-negative Ricci curvature and Riemannian volume forms; and (3) metrics generated by non-smooth Baouendi-Grushin type vector fields and Lebesgue measure. In all cases the Harnack inequality continues to hold when the Lebesgue measure is substituted by any smooth volume form or by measures with densities corresponding to Muckenhoupt type weights.

% and hence we contribute to the growing literature devoted to Harnack estimates derived under minimal conditions emanating from the important works of Saloff-Coste \cite{SC} and Grigor'yan \cite{grig}.
%
\medskip

\noindent
2000 {\em Mathematics Subject Classification.}
\noindent

\medskip

\noindent
{\it Keywords and phrases: doubling measure, Poincar\'e inequality,  quasi-linear partial differential equation, Harnack inequality, $p$-parabolic, subelliptic.}
\end{abstract}

 \setcounter{equation}{0} \setcounter{theorem}{0}

\section{Introduction and statement of main results}
\noindent
In their seminal works, Saloff-Coste \cite{SC} and Grigor'yan \cite{grig} established the equivalence between
Harnack inequalities for weak solutions to a class of subelliptic linear partial differential equations, with smooth coefficients, and two key metric-measure properties of the ambient space. The first property is the doubling inequality for the measure of balls, balls defined using a control metric naturally associated to the operator, and the second property is the validity of a Poincar\'e
inequality involving a notion of gradient naturally associated to the operator.
This point of view, independently developed in the work of Biroli and Mosco \cite{BM} and Sturm \cite{Sturm}, has been further studied by several authors, and has led to Harnack inequalities for more general classes of nonlinear parabolic PDE, see for instance \cite{KK}, \cite{KMMP}, \cite{MM}, \cite{MS} and \cite{CCR}. The ideas in \cite{SC} and \cite{grig}, are based on Moser's approach \cite{Moser2}. Although Moser's approach have been successfully used to prove Harnack's inequality for stationary solutions of equations of $p-$Laplace type, the extension of Moser's approach to the degenerate parabolic setting is not straightforward. Even in the Euclidean setting, the parabolic Harnack inequality for degenerate PDEs of $p-$Laplace type, with bounded and measurable coefficients, was only recently established by DiBenedetto, Gianazza and Vespri in \cite{DiBenedettoGianazzaVespri1} and by Kuusi in \cite{K}.

In this paper we add to this line of investigation by extending the recent works, \cite{DiBenedettoGianazzaVespri1} and \cite{K}, by establishing an intrinsic Harnack inequality for a class of quasi-linear differential equations tailored to the parabolic $p$-Laplacian in a general \CC setting. A {\it prototype} for the type of situations we consider in this paper is given by weak solutions to the degenerate parabolic quasi-linear PDE
 \begin{equation}\label{prototype}
 \partial_t  u (x,t) =- \sum_{i,j=1}^m \frac{1}{w(x)} X_i^* \bigg(w(x) |\X u(x,t) |^{p-2} a^{i,j}(x,t) X_j u(x,t) \bigg).
 \end{equation}
 Here $p\ge 2$, $\X=(X_1,...,X_m)$, with $X_i=\sum_{j=1}^n c_{ij}(x) \partial_{x_j}$,  is a system of smooth vector fields in $\R^n$, satisfying H\"ormander's finite rank hypothesis \cite{H}, $X^\ast_i=-X_i+\sum_{j=1}^n \partial_{x_j} c_{ij}(x)$ is the formal adjoint of $X_i$ (with respect to Lebsgue measure $d\mathcal L$), and $w(x)d\mathcal L$ is an admissible Borel measure (see Definition \ref{admissible} below).  The $m\times m$ matrix of (Lebesgue) measurable functions $a^{ij}(x,t)$ satisfies the usual coercivity hypothesis: {\it there exists $\lambda,\Lambda>0$ such that  $\Lambda |\xi|^2 \ge a^{ij}(x,t) \xi_i \xi_j \ge \lambda |\xi|^2$ for all $\xi\in \R^m$ and for a.e. $(x,t)$}. 
% In the Euclidean setting, and for  linear PDE ($p=2$), the Harnack inequality for this type of 
% %double-weight 
%degenerate  parabolic equations  was initially investigated by Chiarenza and Serapioni \cite{ChiSer}. The unweighted case goes back to Moser's celebrated result \cite{Moser2}. % weight in the Muckenhoupt $A_p$ class defined in terms of the Carnot-Carathodory control metric generated by $\X$ (see \cite{FSS1} and \cite{FSS2}).

Although our results are new even in the setting of the example \eqref{prototype} in the  metric mesure spaces $(\R^n,d,w(x) d\mathcal L)$ with metrics $d$ associated to smooth H\"ormander vector fields, they actually encompass a broader setting. The motivation for pursuing  this larger degree of generality is two-fold:
\begin{enumerate}
	\item  We wish to identify to what extent, for a given measure metric space, the doubling and Poincar\'e inequalities are sufficient to guarantee parabolic Harnack type inequalities for solutions of problems involving operators of $p-$Laplace type (for instance for elements of De Giorgi classes or for solutions of gradient flows of the $p-$energy).
	\item Provide results that hold for non-smooth systems of vector fields generating
	control metrics which arise from applications, such as the Baouendi-Grushin system \cite{Baouendi}, \cite{Grushin}, or the vector fields appearing in the study of the Levi equation \cite{Citti}.
\end{enumerate}
% Although our results are new even in the case of smooth H\"ormander vector fields without weight, they actually encompass a broader setting. The motivation for this larger degree of generality is two-fold: (1) we wish to identify to what extent, for a given measure metric space, the doubling and Poincar\'e inequalities are sufficient to guarantee parabolic Harnack type inequalities for operators of $p-$Laplace type; (2) provide results that hold for non-smooth systems of vector fields generating
% control metrics which arise from applications, such as the Baouendi-Grushin system \cite{Baouendi}, \cite{Grushin}, or the vector fields appearing in the study of the Levi equation \cite{Citti}.

We refer the reader to the exciting emerging literature on parabolic quasi-minimizers and parabolic De Giorgi classes in metric measure spaces, see \cite{KK}, \cite{MS}, \cite{KMMP}, \cite{MM}, for an alternative (and broader) point of view on the study of evolutionary problems in metric measure spaces equipped with a doubling measure, supporting a Poincar\'e inequality.

 \subsection{The ambient space geometry}
 We consider a smooth real manifold $\M$ endowed with a control distance $d(\cdot, \cdot): \M \times \M \to \R^{+}$ defined as the \CC control distance generated by a system of bounded, Lipschitz (when expressed in local coordinates) vector fields $\X = (X_1,\ldots, X_m)$ on $\M$, see \cite{NSW}, \cite{Bel} and \cite{GN1}. Following \cite{Bel} and \cite{GN2} our first standing hypothesis is that
 \begin{equation}\label{topology}
	\text{ the inclusion }	i: (\R^n, | \cdot |) \to (\R^n,d) \text{ is continuous.}
	\end{equation}
	This hypothesis guarantees that the topology generated on $\M$ by the metric $d$ coincides with the standard topology obtained as pull-back from the local charts of the standard topology in $\R^n$.
 We also request that $\X$ consists of	$\mu$-measurable vector fields on $\M$ where $\mu$ is a locally finite Borel measure on $\M$ which is absolutely continuous with respect the Lebesgue measure when represented in local charts.
%WHY absolutely continuous??
	%
	% Since our focus is not on the topology let's just get rid of this generality and stick to the same topology of the manifold.
	%
		We let, for $x \in \M$ and $r > 0$, $B(x,r) = \{y \in \M: d(x,y) < r \}$ denote the corresponding open metric balls and we let $|B(x,r)|$ denote the $\mu$ measure of $B(x,r)$. In general, given a function $u$ and a ball $B=B(x,r)$ we will let $u_B$ denote the $\mu$-average of $u$ on the ball $B=B(x,r)$. In view of \eqref{topology} the closed metric ball $\bar B$ is a compact set.
%We will associated to the metric space $(\M,d)$ the topological space $(\M,\mathcal{J})$ where $\mathcal{J}$ is the set of open balls $\{B(x,r)\}$ on $\M$. In particular, we define the topology on $(\M,d)$ as the topology generated by the open balls $\{B(x,r)\}$ on $\M$.

% This is now redundant
%
%The topology we will work with will be that of the metric space $(\M,d)$, and the topology of sets in $\M \times \R$ will be the product of the topology of $(\M,d)$ and the standard topology on $\R$.
%Similarly, the topology on $\M \times \R$ is defined to be the naturally defined product topology.
We denote by $\sup$ and $\inf$ the \emph{essential supremum} and the \emph{essential infimum} defined with respect to $\mu$. Given a function $u$ on $\M$ we let $\supp u$ denote the support of $u$. If $p\ge 2$, and $u\in L^p_{loc}(\M,\mu)$ then the support is defined in terms of the support of a distribution.
%Given $m \in \N\cup\{0\}$ we let $C^m(\M)$
%be the set of $m$-times continuously differentiable functions on $M$ and, given an open set $\Omega \subset \M$, we let
%\begin{equation} \label{C0m}
%	C_0^m (\Omega) = \{u \in C^m(\M): \supp u \Subset \Omega \}.
%\end{equation}
%Note that if $d$-open sets are open w.r.t. the topology in the manifold $\M$ then \eqref{C0m} coincides with the usual definition. Furthermore, we let
%\begin{equation*}
%	C_{0} (\Omega) = \{u:\Omega \to \R: u \text{ is continuous and } \supp u \Subset \Omega \},
%\end{equation*}
%denote the space of functions, continuous, and compactly supported in $\Omega$.
Given $\Omega \subset \M$, open, and $1 \leq p \leq \infty$, we let $W_{\X}^{1,p}(\Omega) = \{ u \in L^p(\Omega,\mu): X_i u \in L^p (\Omega,\mu), i=1,...,m\}$ denote the {\it horizontal} Sobolev space, and we let $W_{\X,0}^{1,p}\subset W_{\X}^{1,p}$ be the closure\footnote{For a detailed study on the validity of ``$H=W$'' in general metric measure spaces, and in particular on the relation between the definitions used in this paper and the more commonly used definition based on the closure of the class of smooth functions with compact support see \cite{Friedrichs}, \cite{Kilpelainen}, \cite{GN2}, \cite{FSS1}, \cite{FSS2} and \cite{Serra}.} of the space of $W_{\X}^{1,p}$ functions with compact (distributional) support in the norm $\| u \|_{1,p}^p = \| u \|_p + \| \X u \|_p$ with respect to $\mu$. In the following we will omit $\mu$ in the notation for Lebesgue and Sobolev spaces.
Note that for $B(x,r)\subset \M$, the space
$$\{ \phi\cdot w \ |\ \phi\in C_{0}(B(x,r))\cap W_{\X}^{1,\infty}(B(x,r)), \text{ and }w\in W_{\X}^{1,p}(B(x,r))\},$$
where $C_{0}(B(x,r))$ is the set of continuous functions with support contained in $B(x,r)$, is a subset of $W_{\X,0}^{1,p}(B(x,r))$. Given
$t_1<t_2$, and $1 \leq p \leq \infty$, we let $\Omega_{t_1,t_2} \equiv \Omega \times (t_1,t_2)$ and we let $L^p(t_1,t_2;W_{\X}^{1,p}(\Omega))$, $t_1 < t_2$, denote the parabolic Sobolev space of real-valued functions defined on $\Omega_{t_1,t_2}$ such that for almost every $t$, $t_1 < t < t_2$, the function $x \to u(x,t)$ belongs to $W_{\X}^{1,p}(\Omega)$ and
\begin{equation*}
	\|u \|_{L^{p}(t_1,t_2;W_{\X}^{1,p}(\Omega))} =
	\left ( \int_{t_1}^{t_2} \int_{\Omega} (|u(x,t)|^p + |\X u(x,t)|^p) d\mu dt \right )^{1/p} < \infty.
\end{equation*}
The spaces $L^p(t_1,t_2;W_{\X,0}^{1,p}(\Omega))$ is defined analogously. We let $W^{1,p}(t_1,t_2;L^p(\Omega))$ consist of real-valued functions $\eta \in L^p(t_1,t_2;L^p(\Omega))$ such that the weak derivative $\partial_t\eta(x,t)$ exists and belongs to $L^p(t_1,t_2;L^p(\Omega))$. Consider the set of functions $\phi$, $\phi \in W^{1,p}(t_1,t_2;L^p(\Omega))$, such that the functions
\begin{equation*}
	t\to \int_{\Omega} |\phi(x,t)|^p d \mu(x) \mbox{ and } t\to \int_{\Omega} |\partial_t \phi(x,t)|^p d \mu(x),
\end{equation*}
 have compact support in $(t_1,t_2)$. We let $W_0^{1,p}(t_1,t_2; L^p(\Omega))$ denote the closure of this space under the norm in $W^{1,p}(t_1,t_2; L^p(\Omega))$.

Our second set of hypothesis is that we assume that $(\M,\mu,d)$ defines a so called {\it $p$-admissible structure} in the sense of \cite[Theorem 13.1]{HK}.

\begin{definition}\label{admissible} Assume hypothesis \eqref{topology} holds. Given $1 \leq p < \infty$, the triple $(\M,\mu,d)$ is said to define a $p$-admissible structure if for every compact subset $K$ of $\M$ there exist constants $C_D = C_D(\X, K), C_P = C_P(\X, K) > 0$, and $R = R(\X, K) > 0$, such that the following hold.
\begin{enumerate}
\item {\it Doubling property:}\begin{equation} \tag{D} \label{eq_D}
	 |B(x,2r)|\leq C_D |B(x,r)|\mbox{ whenever $x \in K$ and $0 < r < R$}.
\end{equation}
%\item {\it Differentiability a.e. of the control metric along $\X$}: $d(\cdot, x)$ is differentiable a.e. in $K$ and
%\begin{equation} \tag{L} \label{eq_L}
%	\text{ $\|\X d(\cdot,x) \|_{L^\infty (K)} \leq C_L$\mbox{ whenever $x \in K$}.}
%\end{equation}
\item {\it Weak $(1,p)$-Poincar\'e inequality:}
\begin{equation}\tag{P} \label{eq_P}
	\fint_{B(x,r)} |u - u_B| d\mu \leq C_P \, r \left ( \fint_{B(x,2r)} |\X u|^p d\mu \right )^{1/p},
\end{equation}
whenever $x \in K$, $0 < r < R$,
%$ u\in C^1(\M) $.
$u\in W_{\X}^{1,p}(B(x,2r)).$

%\item {\it Approximation by smooth functions:} given $x \in K, 0 < r < R$, and functions $\psi \in W_{\X}^{1,p}(B(x,r))$ and $\phi \in C_{0}(B(x,r))\cap W_{\X}^{1,\infty}(B(x,r))$,
%there exists a sequence of function $\{\theta_j\}$, $\theta_j\in C_0^\infty (B(x,r))$ for all $j$, such that $\theta_j\to \psi \phi$ in $W_{\X}^{1,p}(B(x,r))$. I.e.,
%\begin{equation}\tag{S} \label{eq_S}
%	\psi \cdot \phi \in \overline{C_0^\infty (B(x,r)) \cap W_{\X}^{1,p}(B(x,r))}.
%\end{equation}
\end{enumerate}
\end{definition}
\subsection{Quasilinear degenerate parabolic PDE} From now on $(\M,\mu,d)$ will denote a $p$-admissible structure, for some $p\in [2,\infty)$, in the sense of Definition \ref{admissible}. Given a domain (i.e., an open, connected set) $\Omega\subset\M$, and $T>0$ we set $\Omega_T=\Omega\times (0,T)$. We will say that $\A$ is an {\it admissible symbol} (in $\Omega_T$) if the following holds:
\begin{enumerate}[(i)]
	\item  %$\A : \Omega_T \times \R \times \R^m \to \R^m$ is a Carath\'eodory function, i.e.,
	 $(x,t) \to \A(x,t,u,F)$ is measurable for every $(u,F) \in \R \times \R^m$,
	 \item $(u,F) \to \A(x,t,u,F)$ is continuous for almost every $(x,t) \in \Omega_T$,
	 \item the bounds
	\begin{equation}\label{admissiblesym}
		 \A(x,t,u,F) \cdot F \geq \A_0 |F|^p, \ |\A(x,t,u,F)| \leq \A_1 |F|^{p-1},
	\end{equation}
	hold for every $(u,F) \in \R \times \R^m$ and almost every $(x,t) \in \Omega_T$.
\end{enumerate}
 $\A_0$ and $\A_1$ are called the structural constants of $\A$. If $\A$ and $\tilde \A$ are both admissible symbols, with the same structural constants $\A_0$ and $\A_1$, then we say that the symbols are structurally similar.

Let $E$ be a domain in $\M \times \R$. We say that the function $u:E\to\R$ is a weak solution to
\begin{equation} \label{eq_theeq}
\partial_t u(x,t)=	L_{A,p} u \equiv -\sum_{i=1}^{m}X_i^\ast \A_i(x,t,u,\X u),
\end{equation}
 in $E$, where $X^\ast_i$ is the formal adjoint w.r.t. $d \mu$, if whenever $\Omega_{t_1,t_2} \Subset E$ for some domain $\Omega\subset\M$, $u \in L^p(t_1,t_2;W_{\X}^{1,p}(\Omega))$ and
	\begin{equation} \label{eq_sol}
		- \int_{t_1}^{t_2} \int_{\Omega} u \frac{\partial \eta}{\partial t} d\mu dt + \int_{t_1}^{t_2} \int_{\Omega} \A(x,t,u,\X u) \cdot \X \eta\ d\mu dt = 0,
	\end{equation}
	for every test function $$\eta \in W_0^{1,2}(t_1,t_2; L^2(\Omega)) \cap L^p (t_1,t_2; W_{\X,0}^{1,p}(\Omega)). $$ A function $u$ is a weak super-solution (sub-solution) to \eqref{eq_theeq} in $E$ if
whenever $\Omega_{t_1,t_2} \Subset E$ for some domain $\Omega\subset\M$, we have $u \in L^p(t_1,t_2;W^{1,p}(\Omega))$, and the left hand side of \eqref{eq_sol} is non-negative (non-positive) for all non-negative test functions $W_0^{1,2}(t_1,t_2; L^2(\Omega)) \cap L^p (t_1,t_2; W_{\X,0}^{1,p}(\Omega))$.

\subsection{Statement of main result}
The main result of the paper is the
 following Harnack inequality for weak solutions to \eqref{eq_theeq}.

%\begin{theorem}\label{main-th} Let $(\M,\mu,d)$ be a $p$-admissible structure in the sense of Definition \ref{admissible}, let $p$, $2\leq p<\infty$, be fixed, and assume that $\A$ satisfies the structure conditions \eqref{admissiblesym}. Let $K$ be a compact subset of $\M$ and let $R=R(\X,K)>0$ be as in Definition \ref{admissible}. Consider $x_0\in K$, $0<8{\hat r}<R$, $0<t_0<T_0$, assume $B(x_0,8{\hat r})\Subset K$, and let $u$ be a non-negative continuous weak solution to \eqref{eq_theeq} in an open set which contains $\overline{B(x_0,8{\hat r})}\times [0,T_0]$. Let $\lambda=u(x_0, t_0)$ and assume that $\lambda>0$. Then there exists constants $C_1,C_2,C_3 \geq 1$, depending only on $C_P,C_D,C_L, \A_0,\A_1, p$, such that if
%\begin{equation*}
%	(t_0 - C_1\lambda^{2-p}{\hat r}^p,
%t_0 + C_1 \lambda^{2-p}{\hat r}^p) \subset (0,T_0),
%\end{equation*}
%then
%\begin{equation*}
%	u(x_0, t_0) \le C_2\inf_{Q}u,
%\end{equation*}
%where
%\begin{equation*}
%	Q=B(x_0, {\hat r}) \times (t_0 +\frac 1 2{C_3} {\lambda^{2-p}{\hat r}^p},
%t_0 + C_3 \lambda^{2-p}{\hat r}^p).
%\end{equation*}
%Furthermore, the constants $C_1,C_2,C_3$ can be chosen independently of $p$ as $p\to 2$.
%\end{theorem}
%

\begin{theorem}\label{main-th} Let $(\M,\mu,d)$ be a $p$-admissible structure for some fixed $p\in [2,\infty)$. For a bounded open subset $\Omega\subset \M$, let $u$ be a non-negative, weak solution to \eqref{eq_theeq} in an open set containing the cylinder $\Omega\times [0,T_0]$ and assume that the structure conditions \eqref{admissiblesym} are satisfied.

There exist constants $C_1,C_2,C_3 \geq 1$, depending only on $\data$, such that for almost all $(x_0,t_0)\in \Omega\times [0,T_0]$, the following holds: If $u(x_0,t_0)>0$, and if $0<r\le R(\X, \bar \Omega)$ (from Definition \ref{admissible}) is sufficiently small so that
\begin{equation*}
	B(x_0,8r)\subset \Omega \quad \text{ and }\quad (t_0 - C_1 u(x_0,t_0)^{2-p}{r}^p,\
t_0 + C_1 u(x_0,t_0)^{2-p}{r}^p) \subset (0,T_0),
\end{equation*}
then
\begin{equation*}
	u(x_0, t_0) \le C_2\inf_{Q}u,
\end{equation*}
where
\begin{equation*}
	Q=B(x_0, {r}) \times \bigg(t_0 +\frac 1 2{C_3} { u(x_0,t_0)^{2-p}{r}^p},
t_0 + C_3 u(x_0,t_0)^{2-p}{r}^p\bigg).
\end{equation*}
Furthermore, the constants $C_1,C_2,C_3$ can be chosen independently of $p$ as $p\to 2$.
\end{theorem}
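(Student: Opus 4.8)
The proof will follow the strategy introduced by DiBenedetto, Gianazza and Vespri \cite{DiBenedettoGianazzaVespri1} (see also Kuusi \cite{K}), with the Euclidean dimension everywhere replaced by the doubling exponent of $\mu$ and with every use of dilations and translations replaced by Vitali-type coverings and finite chains of balls whose cardinality is controlled purely by \eqref{eq_D} and \eqref{eq_P}. The first step is to establish the basic energy (Caccioppoli and logarithmic) estimates for weak sub- and super-solutions of \eqref{eq_theeq}. Testing the weak formulation \eqref{eq_sol} — after a Steklov averaging in $t$ to legitimize the time derivative — with $\eta=\pm(u-k)_\pm\zeta^p$, where $\zeta$ is a cut-off compactly supported in a space-time cylinder and one uses the Leibniz rule for $X_i$ on products of a Lipschitz function and a $W^{1,p}_{\X}$ function, the structure conditions \eqref{admissiblesym} yield
\begin{equation*}
\sup_{t}\int (u-k)_\pm^2\,\zeta^p\,d\mu+\iint|\X(u-k)_\pm|^p\,\zeta^p\,d\mu\,dt\le C\iint\big((u-k)_\pm^p|\X\zeta|^p+(u-k)_\pm^2|\partial_t\zeta^p|\big)\,d\mu\,dt,
\end{equation*}
with $C=C(\data)$; a companion logarithmic estimate is obtained by testing with a test function built from $\Psi=\log^+\!\big(H/(H-(u-k)_-+c)\big)$. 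Throughout one records the dependence of the constants on $p$, so as to verify at the end that they stay bounded as $p\downarrow 2$.

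Next I would combine these energy estimates with the Sobolev inequality that follows from \eqref{eq_D}+\eqref{eq_P} (Haj\l asz--Koskela; see \cite[Theorem 13.1]{HK}) and run De Giorgi iterations over sequences of shrinking, intrinsically scaled cylinders. This produces: (a) local boundedness of weak subsolutions in terms of $L^p$-averages over intrinsic cylinders; (b) the two De Giorgi measure lemmas — if a supersolution satisfies $u\ge M$ on a sufficiently large portion of an intrinsic cylinder then $u\ge M/2$ on a smaller one, together with the ``shrinking of the measure of the level sets'' coming from the Poincar\'e/isoperimetric inequality. The technical heart is then the \emph{expansion of positivity}: starting from $|\{u(\cdot,t_1)\ge M\}\cap B(y,\rho)|\ge\alpha|B(y,\rho)|$ for a nonnegative supersolution, the logarithmic estimate gives that for all later times in an intrinsically scaled interval the measure of $\{u(\cdot,t)\ge\sigma M\}\cap B(y,\rho)$ stays above $\tfrac\alpha2|B(y,\rho)|$, and then (a) upgrades this to the pointwise bound $u\ge\eta M$ on a full cylinder $B(y,2\rho)\times(t_1+c_1M^{2-p}\rho^p,\,t_1+c_2M^{2-p}\rho^p)$, with $\eta,c_1,c_2$ depending only on $\data$ and $\alpha$. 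Passing from a ball to a larger concentric one here requires a finite chain of balls, available from \eqref{eq_D} and \eqref{eq_P}, with the doubling constant controlling the accumulated loss.

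With these tools in place, the conclusion is reached by intrinsic rescaling and a Harnack chain. Fix a Lebesgue point $(x_0,t_0)$ of $u$ with $M:=u(x_0,t_0)>0$; dilating time by $M^{2-p}$ reduces to $M=1$. A continuity-in-measure argument in the spirit of \cite{DiBenedettoGianazzaVespri1} then locates, inside $B(x_0,2r)\times(t_0-C_1M^{2-p}r^p,t_0)$, a time level $\bar t$, a center $\bar x$ near $x_0$ and a radius $\bar\rho\simeq r$ on which $\{u(\cdot,\bar t)\ge\gamma\}$ occupies a fixed fraction of $B(\bar x,\bar\rho)$; applying the expansion of positivity propagates this forward in time and outward in space up to $B(x_0,r)$, yielding $u(x_0,t_0)\le C_2\inf_Q u$ on exactly the cylinder $Q$ of the statement. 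The constants $C_1,C_2,C_3$ are assembled from those of the previous steps; since the Caccioppoli, logarithmic and Sobolev constants are $p$-uniform on compact sets and the De Giorgi iteration and chain-length constants vary continuously in $p$, all of $C_1,C_2,C_3$ can be taken independent of $p$ near $2$, which gives the last assertion.

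I expect the main obstacle to be the expansion of positivity in Steps~2--3 in the present generality: the absence of dilations and of a group structure forces one to replace the standard ``shrink and translate'' of balls by Vitali coverings and finite Harnack chains whose length is controlled solely by \eqref{eq_D} and \eqref{eq_P}, and to make the intrinsic time-scales match up along the chain; moreover, since the homogeneous dimension is only an exponent extracted from the doubling condition, every Sobolev-type exponent must be handled through inequalities and no step may rely on a sharp dimensional exponent. A secondary subtlety is the time-regularization needed to use $(u-k)_\pm\zeta^p$ as a test function, since $\partial_t u$ exists only in a weak/distributional sense.
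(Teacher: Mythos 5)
Your overall architecture (Caccioppoli-type energy estimates, the Sobolev inequality coming from \eqref{eq_D}+\eqref{eq_P}, De Giorgi iteration, expansion of positivity, rescaling only in time) is sound, and your expansion-of-positivity claim is essentially the paper's Lemma \ref{lem_preexpofpos}/Lemma \ref{lem_expofpos}. The genuine gap is in your final step. You assert that a ``continuity-in-measure argument in the spirit of \cite{DiBenedettoGianazzaVespri1}'' locates, inside $B(x_0,2r)\times(t_0-C_1M^{2-p}r^p,t_0)$, a time $\bar t$ and a ball $B(\bar x,\bar\rho)$ with $\bar\rho\simeq r$ on which $\{u(\cdot,\bar t)\ge \gamma\}$ fills a fixed fraction, with $\gamma\simeq M=u(x_0,t_0)$ and all constants structural. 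This is exactly what cannot be extracted directly from the value at a Lebesgue point: $u$ may concentrate near $(x_0,t_0)$ on an arbitrarily thin spike, and a Lebesgue-point argument only produces such a ball at some radius depending on $u$ itself, not on $\data$. What DiBenedetto--Gianazza--Vespri actually prove is a dichotomy over a family of shrinking intrinsic cylinders with comparison levels $M(1-\tau)^{-\beta}$: either the solution is very large somewhere, in which case one obtains measure-largeness only at a much higher level and on a ball whose radius degenerates like a power of the smallness parameter, or it is bounded and the $L^\infty$-estimate yields the fixed fraction; the proof then hinges on choosing $\beta$ (in this paper, the exponent $\sigma$ in the hot alternative, Lemma \ref{hot}, chosen in terms of $\theta$) so that the gain in the level exactly beats the power-like loss $(r/r_0)^{\theta}$ in Lemma \ref{lem_expofpos}. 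This quantitative balancing is the heart of the theorem and is absent from your sketch; as written, your final step assumes a weak form of the conclusion. A related loose end is the waiting time: since the radius and level at which the expansion is started are not controlled, the time at which positivity reaches $B(x_0,r)$ is a priori not the universal $C_3\,u(x_0,t_0)^{2-p}r^p$ of the statement and requires an extra argument.

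For comparison, the paper does not run the critical-cylinder scheme at all: it proves two weak Harnack inequalities (Theorem \ref{theorem1.2a} for supersolutions, via the hot and cold alternatives, the clustering Lemma \ref{lem_local_clust}, the reverse H\"older estimate and the expansion of positivity; Theorem \ref{theorem1.2b} for subsolutions, via Moser-type iteration) and then combines them, following Kuusi, after rescaling only the time variable by $u(x_0,t_0)^{2-p}$. Your DGV-style route is not unreasonable in this setting, but to complete it you must supply in full the alternative/``critical mass'' argument, including the precise exponent choice that compensates the loss in the expansion of positivity, or else switch to the weak-Harnack route the paper takes.
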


\begin{remark}
	The dependency of the constants in Theorem \ref{main-th} on the vector field $\X$ comes from the fact that the gradient bound on the cut-off functions established by Garofalo and Nhieu in \cite[Theorem 1.5]{GN2} (see Lemma \ref{lem_2.1}) depends on the Lipschitz constant of the vector fields.
\end{remark}

\begin{corollary} \label{main-cor} Let $(\M,\mu,d)$ be a $p$-admissible structure for some $p\ge 2$. Every weak solution of \eqref{eq_theeq} can be modified in a set of measure zero so that it is locally H\"older continuous with respect to the control distance.
\end{corollary}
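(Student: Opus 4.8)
\emph{Plan.} The plan is to derive local H\"older continuity from the intrinsic Harnack inequality of Theorem \ref{main-th} by the classical oscillation-decay argument run on a nested family of intrinsically scaled parabolic cylinders; this is the parabolic analogue of the usual deduction of H\"older continuity from Harnack's inequality, and it invokes the metric-measure structure only through Theorem \ref{main-th} and the doubling property \eqref{eq_D}. First I would reduce to non-negative solutions: the local boundedness of a weak solution of \eqref{eq_theeq} follows from the same energy estimates that underlie the proof of Theorem \ref{main-th}, so on any subcylinder compactly contained in $E$ we may assume $|u|\le M$, and then $w:=u+2M$ is a non-negative weak solution of an equation with symbol $\tilde\A(x,t,w,F)=\A(x,t,w-2M,F)$, which is admissible with the same structural constants $\A_0,\A_1$; hence it suffices to prove the statement for $w$, i.e.\ we may assume $u\ge0$ and bounded. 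Fix a point $(x_0,t_0)$ in the full-measure set where Theorem \ref{main-th} applies to $u$ and to all additive perturbations $u-c$, choose $r_0>0$ small enough that $B(x_0,8r_0)\Subset\Omega$ and a generously sized intrinsic time-cylinder lies in $E$, and put $\mu_0^{\pm}=\sup/\inf u$ over a reference cylinder, $\omega_0=\mu_0^{+}-\mu_0^{-}$. If $\omega_0=0$ there is nothing to prove, so assume $\omega_0>0$.

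The inductive step produces radii $r_n=\sigma^{n}r_0$, intrinsic time scales $a_n\simeq\omega_n^{2-p}$, cylinders $Q_n=B(x_0,r_n)\times(t_0-a_nr_n^{p},t_0+a_nr_n^{p})$, and oscillations $\omega_n$ with $\operatorname{osc}_{Q_n}u\le\omega_n$ and $\omega_{n+1}\le\lambda\omega_n$ for a fixed $\lambda=\lambda(\data)\in(0,1)$. On $Q_n$ both $u-\mu_n^{-}$ and $\mu_n^{+}-u$ are non-negative weak solutions of equations structurally similar to \eqref{eq_theeq} (for the second the symbol is $(x,t,v,F)\mapsto-\A(x,t,\mu_n^{+}-v,-F)$, again admissible with constants $\A_0,\A_1$), so Theorem \ref{main-th} applies to each with the \emph{same} constants. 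At the vertex $(x_0,t_0)$ at least one of these functions is $\ge\omega_n/2$; say $v:=u-\mu_n^{-}$ has $v(x_0,t_0)\ge\omega_n/2>0$. Since $p\ge2$ and $v(x_0,t_0)\in[\omega_n/2,\omega_n]$ we have $v(x_0,t_0)^{2-p}\in[\omega_n^{2-p},(\omega_n/2)^{2-p}]$, so after choosing the radius in Theorem \ref{main-th} to be a fixed small fraction of $r_n$ and $a_n$ an appropriate multiple of $\omega_n^{2-p}$, the forward cylinder $Q$ furnished by the theorem lies inside $Q_n$. Harnack then gives $\inf_{Q}u\ge\mu_n^{-}+\omega_n/(2C_2)$, hence $\operatorname{osc}_{Q}u\le(1-\tfrac1{2C_2})\omega_n$; setting $\lambda=1-\tfrac1{2C_2}$, $\omega_{n+1}=\lambda\omega_n$, and taking $r_{n+1}$ small enough (recentering the vertex, which stays at $x_0$ but shifts in time by $\simeq a_nr_n^p$) so that $Q_{n+1}\subset Q$, closes the induction and yields $\operatorname{osc}_{Q_n}u\le\lambda^{n}\omega_0$.

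Finally I would convert this geometric decay into a H\"older modulus of continuity at $(x_0,t_0)$. The one genuinely delicate point — and the step I expect to be the main obstacle — is organizing the intrinsically scaled cylinders so that they both nest ($Q_{n+1}\subset Q\subset Q_n$) and shrink \emph{to the point} $(x_0,t_0)$, given that the Harnack cylinder $Q$ is forward-in-time and of size governed by the varying value of the solution: because $2-p\le0$ and $\omega_n\to0$, the time-extents $a_nr_n^{p}\simeq\omega_n^{2-p}r_n^{p}$ scale like $(\lambda^{2-p}\sigma^{p})^{n}$, so one must first fix $\sigma=\sigma(\data)\in(0,1)$ small enough that $\lambda^{2-p}\sigma^{p}<1$; then both spatial radii and time-extents of the $Q_n$ tend geometrically to $0$, and interpolating between consecutive scales yields, for $(y,s)$ near $(x_0,t_0)$, a bound $|u(y,s)-u(x_0,t_0)|\lesssim(d(x_0,y)+|t_0-s|^{1/p})^{\alpha}$ with $\alpha=\alpha(\data)>0$, i.e.\ $u$ is locally H\"older with respect to $d$ in space, uniformly in $t$, and H\"older of exponent $\alpha/p$ in $t$ — the natural meaning of H\"older continuity with respect to the control distance in the parabolic setting. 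Since a.e.\ point of $\Omega\times(0,T_0)$ is a Lebesgue point of $u$ and lies in the good set, this estimate exhibits $u$ as coinciding $\mu\otimes dt$-a.e.\ with a function that is uniformly continuous on a dense subset of every compact subcylinder, hence extends to a (locally H\"older) continuous representative; redefining $u$ on the exceptional null set to equal it proves the corollary. I would also remark that, since this argument only uses Theorem \ref{main-th}, the transformation $u\mapsto c-u$, and elementary geometry of intrinsic cylinders, it may alternatively be replaced by quoting the De Giorgi--type oscillation estimates already established en route to Theorem \ref{main-th}.
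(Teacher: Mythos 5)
The paper never writes out a proof of this corollary: it is left as the standard ``intrinsic Harnack implies H\"older'' argument of DiBenedetto--Gianazza--Vespri, and your overall strategy is exactly that route. Your reduction to non-negative bounded solutions, the observation that $u-\mu_n^-$ and $\mu_n^+-u$ solve equations with symbols structurally similar to $\A$ (same $\A_0,\A_1$), the oscillation-reduction arithmetic $\operatorname{osc}_{Q}u\le(1-\tfrac{1}{2C_2})\omega_n$, and the requirement $\lambda^{2-p}\sigma^{p}<1$ are all correct and are the right ingredients.

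There is, however, a genuine gap at exactly the point you flag as delicate, and shrinking $\sigma$ does not repair it. Theorem \ref{main-th} gives information only \emph{strictly forward} in time: the cylinder $Q$ it furnishes starts at time $t_0+\tfrac12 C_3 v(x_0,t_0)^{2-p}\rho^{p}>t_0$, so $(x_0,t_0)\notin Q\supset Q_{n+1}$. Since you anchor each Harnack application at the current vertex and then recenter, the vertices drift forward by $\simeq \omega_n^{2-p}r_n^{p}$ at every step; the total drift $\sum_n \omega_n^{2-p}r_n^{p}\simeq \omega_0^{2-p}r_0^{p}/(1-\lambda^{2-p}\sigma^{p})$ is a positive quantity comparable to the height of $Q_0$, so the nested cylinders shrink to some uncontrolled point $(x_0,t_0+T^\ast)$ with $T^\ast>0$, not to $(x_0,t_0)$. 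Consequently the geometric decay $\operatorname{osc}_{Q_n}u\le\lambda^{n}\omega_0$ yields no modulus of continuity at $(x_0,t_0)$, and the family of drifted limit points is not known to be dense, let alone of full measure, so no continuous representative is produced. The standard fix (as in the DGV monograph) is to keep \emph{backward} cylinders $B(x_0,r_n)\times(t_0-\omega_n^{2-p}r_n^{p},t_0]$ with the \emph{fixed} vertex $(x_0,t_0)$, choose a point $(x_0,t_0-\Delta_n)$ in the past at which one of the two truncations is at least $\omega_n/2$ (choosing $\Delta_n$ among admissible Lebesgue instants also disposes of the ``for almost all $(x_0,t_0)$'' restriction in Theorem \ref{main-th}, which your ``full-measure set for all $u-c$'' phrasing glosses over), and apply the forward Harnack inequality \emph{there}, tuning the radius --- legitimate since the theorem holds for every smaller radius --- so that the waiting-time window, whose location is only known up to the factor $2^{p-2}$ ambiguity in the pointwise value, actually covers $t_0$. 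This gives the lower bound on a neighborhood of $(x_0,t_0)$ and an oscillation reduction on a smaller cylinder with the same vertex; with that modification the rest of your argument (choice of $\sigma$, interpolation to a H\"older modulus in $d$ and $t^{1/p}$, extension of the precise representative from Lebesgue points) goes through.
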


%\begin{remark}
%	The restriction that Theorem \ref{main-th} holds almost everywhere is a consequence of having no information about the continuity of the solution. The %restriction can be removed as a consequence of Corollary \ref{main-cor}.
%\end{remark}

\begin{remark}
	Note that in our set-up we have assumed that $\mu$ is absolutely continuous with respect to the Lebesgue measure when represented in local charts. In our arguments this hypothesis is necessary for the construction of suitable test functions. However, Theorem \ref{main-th}  remains true if this hypothesis is replaced by the assumption that the metric is differentiable in the direction of the vector fields, almost everywhere with respect to $\mu$, and that the differential is $\mu-$essentially bounded. In the latter case, although the formal adjoints $\X^\ast_i$, and hence \eqref{eq_theeq}, are not well defined, still, the notion of a weak solutions in \eqref{eq_sol} is well-defined and the theory applies.
\end{remark}

To put Theorem \ref{main-th} into perspective, and frame it within the context of the current literature, we note that Theorem \ref{main-th} contains, in terms of the structure conditions \eqref{admissiblesym}, the following examples and results as special cases.
\begin{example}%\label{ex1}
	In the case $\M=\mathbb R^n$, $d\mu$ equals the $n$-dimensional Lebesgue measure, $\X = (X_1,\ldots, X_m)=(\partial_{x_1},...,\partial_{x_n})$,
$p=2$, the result was established in the classical papers by Moser \cite{Moser2}, and by Aronson and Serrin \cite{AronsonSerrin}. The weighted version (with Muckenhoupt weights) 
  was  investigated by Chiarenza and Serapioni \cite{ChiSer}.
In the case $2\le p<\infty$, the corresponding Harnack inequality was proved by DiBenedetto, Gianazza and Vespri in \cite{DiBenedettoGianazzaVespri1}, see also \cite{DGVbook}, and by Kuusi using a different approach in \cite{K}.
\end{example}

\begin{example}%\label{ex3}
	In the case $(\M,\mu,d)$ is a $2$-admissible structure in the sense of Definition \ref{admissible} and $\A$ satisfies the structure conditions \eqref{admissiblesym} with $p=2$, the Harnack inequality was recently established by Rea and two of us in \cite{CCR}. In the broader context of parabolic De Giorgi classes (again $p = 2$) the Harnack inequality was proved  by Kinnunen, Marola, Miranda and Paronetto \cite{KMMP},  in a more general metric measure space setting.
\end{example}

%
%In particular, Example \ref{ex1}-Example \ref{ex3} show that our Theorem \ref{main-th} contain all previously results devoted to Harnack inequalities for
%quasi-linear parabolic partial differential equations satisfying the structure conditions \eqref{admissiblesym}. However, our result
In addition, Theorem \ref{main-th} also covers many new situations some of which we next exemplify.

\begin{example}\label{tre}  If $\M$ is a smooth manifold, $d\mu$ a smooth volume form, and $\X$ is a system of smooth vector fields satisfying H\"ormander's finite rank condition $rank(Lie\{\X\})(x) =n$ at every point $x\in \M$ (see \cite{H}), then
the Poincar\'e inequality is due to Jerison \cite{Jerison} and the doubling condition was established by Nagel, Stein and Wainger in \cite{NSW}.
The PDE \eqref{eq_theeq} is sub-elliptic and our results provide a (degenerate) parabolic analogue of the Harnack inequality established by Danielli, Garofalo and one of us in \cite{CDG}. Theorem \ref{main-th} also covers the case in which $d\mu$ can be expressed in local coordinates through a multiple of a smooth volume form times a Muckenhoupt $A_p$ weight with respect to the \CC metric generated by $\X$. In this weighted setting the Poincar\'e inequality is due to Lu \cite{Lu}. The stationary Harnack inequality for linear divergence form subelliptic equations  was first proved by Franchi, Lu and Wheeden \cite{FLW}. See also the interesting papers  \cite{FSS1},   \cite{FSS2}, and references therein.
\end{example}
\begin{example}
Our setting is also sufficiently broad to include non-smooth vector fields such as the Baouendi-Grushin frames, e.g., consider, for $\gamma\ge 1$ and $(x,y)\in \R^2$, the vector fields $X_1=\partial_x$ and $X_2=|x|^{\gamma}\partial_y$. Unless $\gamma$ is a positive even integer these vector fields fail to satisfy H\"ormander's finite rank hypothesis. However, the doubling inequality as well as the Poincar\'e inequality hold and have been used in the work of Franchi and Lanconelli \cite{FL} to establish Harnack inequalities for linear equations.
\end{example}

\begin{example} Consider a smooth manifold $\M$ endowed with a complete Riemannian metric $g$. Let $\mu$ denote the Riemann volume measure, and by $\X$ denote a $g-$orthonormal frame. If the Ricci curvature is bounded from below ($Ricci\ge -Kg$) then our result yields Harnack inequalities for non-negative weak solutions to \eqref{eq_theeq} in every compact subset of $(\M,g)$. In fact, in this setting the Poincar\'e inequality follows from Buser's inequality while the doubling condition is a consequence of the Bishop-Gromov comparison principle. If $K=0$, i.e. the Ricci tensor is non-negative, then these assumptions holds globally and so does the Harnack inequality. For more details, see \cite{chavel}, \cite{MSC} and \cite{HK}.\end{example}

\subsection{The proof of Theorem \ref{main-th} and further results} The main technical steps in the proof of Theorem \ref{main-th} are the following
weak Harnack inequalities.

\begin{theorem}\label{theorem1.2a}
Let $(\M,\mu,d)$ be a $p$-admissible structure for some given $p\ge 2$.  For a bounded open subset $\Omega\subset \M$, and $0<t_0<T_0$, consider a non-negative, weak super-solution $u$ of \eqref{eq_theeq} in an
%bounded
open set containing the cylinder $\overline{B(x_0,8{r})}\times [t_0,t_0+T_0]$,
with $B(x_0,8{r})\subset \Omega$ and $0<r\le R(\X, \bar \Omega)$ (from Definition \ref{admissible}).

If the structure conditions \eqref{admissiblesym} are satisfied then
there exist constants $C_1,C_2 \geq 1$, depending only on $\data$, such that if, for a.e. $t_1\in (t_0, t_0+T_0)$, we set
\begin{equation*}
	T=\min\Bigg\{ T_0+t_0-t_1, \ C_1 {r}^p \Bigg( \fint_{B(x_0,{r})} u(x,t_1)d\mu\Bigg)^{2-p}\Bigg\},
\end{equation*}
and
\begin{equation*}
	Q=B(x_0,4{r}) \times ( t_1+{T}/{2}, t_1+T),
\end{equation*}
then
\begin{equation*}
\fint_{B(x_0,{r})} u(x,t_1)d\mu \le \Bigg(\frac{C_1 {r}^p}{T_0+t_0-t_1} \Bigg)^{\frac{1}{p-2}} +C_2 \inf_Q u.
\end{equation*}
Furthermore, the constants $C_1,C_2$ can be chosen independently of $p$ as $p\to 2$.
\end{theorem}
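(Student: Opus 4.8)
The plan is to follow the intrinsic-scaling strategy of DiBenedetto--Gianazza--Vespri \cite{DiBenedettoGianazzaVespri1} and Kuusi \cite{K}, replacing Euclidean cubes throughout by $d$-metric balls and using as the only geometric input the doubling property \eqref{eq_D} and the Poincar\'e inequality \eqref{eq_P}. Every cylinder will carry the \emph{intrinsic} time scale attached to a level $k>0$, i.e.\ it will have the form $Q=B(y,\rho)\times(\tau,\tau+\theta k^{2-p}\rho^p)$, and since $u\in L^p(W^{1,p}_\X)$ has no a priori regularity in $t$ every test-function computation is first carried out on the Steklov averages of $u$ and then passed to the limit. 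The first step is to derive the basic \textbf{energy estimates}: inserting $\zeta^p(u-k)_-$ into \eqref{eq_sol}, where $\zeta(x,t)=\eta(x)\chi(t)$ and $\eta$ is a Lipschitz cutoff on $d$-balls with $|\X\eta|\le C/\rho$ furnished by \cite[Theorem 1.5]{GN2} (Lemma \ref{lem_2.1}), and using the structure conditions \eqref{admissiblesym}, yields a Caccioppoli inequality
\[
\sup_{t}\int \zeta^p(u-k)_-^2\,d\mu+\iint \zeta^p|\X(u-k)_-|^p\,d\mu\,dt\le C\iint \big((u-k)_-^p|\X\eta|^p+(u-k)_-^2\,|\partial_t\zeta^p|\big)\,d\mu\,dt,
\]
together with a companion \emph{logarithmic estimate}, obtained by using $\zeta^p$ times a logarithmic function of $(u-k)_-$ as test function, which bounds $\int\eta^p(\log^+(H/(H-(u-k)_-)))^2(\cdot,t)\,d\mu$ by its value at an earlier time plus a term controlled by $\iint|\X\eta|^p\,d\mu\,dt$. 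The only deviation from the Euclidean argument is the use of the metric cutoffs; stability as $p\to2$ is automatic since all exponents stay in a compact range and $\A_0,\A_1$ are $p$-independent.

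The second step is the \textbf{De Giorgi measure-to-pointwise lemma}: there is $\nu_0=\nu_0(\data)\in(0,1)$ such that if a non-negative super-solution satisfies $|\{u<\xi k\}\cap Q_\rho|\le\nu_0|Q_\rho|$ on an intrinsic cylinder $Q_\rho=B(y,\rho)\times(\tau-(\xi k)^{2-p}\rho^p,\tau)$, then $u\ge\tfrac12\xi k$ on $Q_{\rho/2}$. This is the classical De Giorgi iteration over the levels $k_j=\tfrac12\xi k(1+2^{-j})$ and shrinking cylinders; the analytic engine is a Sobolev--Poincar\'e inequality on $d$-balls, $(\fint_B|v|^{\kappa p}\,d\mu)^{1/(\kappa p)}\le C\rho(\fint_{2B}|\X v|^p\,d\mu)^{1/p}$ for some $\kappa>1$ and $v$ vanishing on a fixed fraction of $B$, which follows from \eqref{eq_D} and \eqref{eq_P} by the standard argument (see \cite{HK}), combined with the Caccioppoli inequality and a fast geometric iteration lemma.

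Third, I would \textbf{propagate positivity forward in time and outward in space}. Starting from $\fint_{B(x_0,r)}u(x,t_1)\,d\mu\ge k$, the logarithmic estimate shows that for a suitable $\sigma=\sigma(\data)$ the set $\{u(\cdot,t)<\sigma k\}\cap B(x_0,2r)$ has measure at most $\nu_0|B(x_0,2r)|$ for every $t$ in an interval of length comparable to $k^{2-p}r^p$ issuing from $t_1$; the preceding lemma then turns this into a pointwise bound $u\ge c\sigma k$ on $B(x_0,r)$ over a subinterval. Using this as a seed, an \emph{expansion of positivity} argument --- iterating the energy and De Giorgi lemmas along a chain of $d$-balls whose multiplicities and radius ratios are controlled by \eqref{eq_D} --- upgrades the bound to $u\ge\eta k$ on $B(x_0,4r)\times(t_1+T/2,t_1+T)$ with $\eta=\eta(\data)$ and $T$ comparable to $k^{2-p}r^p$, which is exactly the assertion with $k=\fint_{B(x_0,r)}u(\cdot,t_1)\,d\mu$. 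The correction term handles the remaining case: if the minimum defining $T$ is attained by $T_0+t_0-t_1$ rather than by $C_1 r^p(\fint_{B(x_0,r)}u(\cdot,t_1)\,d\mu)^{2-p}$, then (for $p>2$) $\fint_{B(x_0,r)}u(\cdot,t_1)\,d\mu<(C_1 r^p/(T_0+t_0-t_1))^{1/(p-2)}$ and the inequality is trivial; otherwise $T$ is the intrinsic time and the argument above applies.

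The step I expect to be the main obstacle is the \textbf{expansion of positivity} in the absence of a cube geometry. In \cite{DiBenedettoGianazzaVespri1} it rests on translations, dilations and explicit coverings of cubes; here each of these must be replaced by a chaining argument among $d$-balls whose multiplicity and radius comparisons are governed solely by \eqref{eq_D}, and the intrinsic time scalings must be kept mutually consistent along the chain while all constants remain bounded as $p\to2$. A subsidiary but pervasive point is the systematic construction of cutoff functions adapted to the intrinsic cylinders with the sharp gradient bound, for which Lemma \ref{lem_2.1} (that is, \cite[Theorem 1.5]{GN2}) is indispensable.
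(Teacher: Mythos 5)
There is a genuine gap at the center of your argument, in the step where you pass from the hypothesis $\fint_{B(x_0,r)}u(\cdot,t_1)\,d\mu\ge k$ to a positivity seed for the expansion argument. You assert that ``the logarithmic estimate shows that for a suitable $\sigma=\sigma(\data)$ the set $\{u(\cdot,t)<\sigma k\}\cap B(x_0,2r)$ has measure at most $\nu_0|B(x_0,2r)|$'' on an intrinsic time interval. This cannot work as stated: the hypothesis is a bound on an \emph{integral average}, not on the measure of a super-level set, and a non-negative super-solution admits no a priori upper bound, so $\fint_{B(x_0,r)}u(\cdot,t_1)\,d\mu\ge k$ is perfectly compatible with $u(\cdot,t_1)$ being huge on a set of arbitrarily small measure and zero elsewhere. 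Hence no constants $\sigma,\nu_0$ depending only on the data can convert the average bound into a measure bound at time $t_1$, and the logarithmic lemma (which propagates a \emph{given} measure-theoretic lower bound forward in time) has nothing to start from. Even granting a measure bound, the log lemma only preserves a fixed fraction of the good set; it does not push the bad set below the De Giorgi critical threshold $\nu_0$ without a further level-iteration argument. Extracting a usable seed from the average bound, with constants independent of the level, is precisely the hard point of the theorem and the reason the Euclidean proofs of DiBenedetto--Gianazza--Vespri and Kuusi are as involved as they are; your proposal treats it as routine.

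The paper resolves exactly this issue with the Hot/Cold dichotomy, which is absent from your outline. After rescaling so that $\fint_{B(x_0,r)}v(\cdot,0)\,d\mu=M_c$ (and disposing of the trivial case in which the correction term dominates, which you do handle correctly), either there is a Lebesgue instant with $|\{v>8k^{1+\sigma}\}|>8k^{-\sigma}|B(x_0,2\hat r)|$ for some $k$ (Hot, Lemma \ref{hot}), or this fails for all $k$ and almost all times (Cold, Lemma \ref{lem_5.1}). In the Hot case the positivity set has fraction only $\sim k^{-\sigma}$, degenerating as $k\to\infty$, so the clustering Lemma \ref{lem_local_clust} is used to locate a small ball of radius $\sim\epsilon\hat r$, with $\epsilon$ a power of $k^{-\sigma}$, on which the fraction is $1/2$; the expansion of positivity (Lemma \ref{lem_expofpos}) is then iterated from that small radius, and the exponent $\sigma$ is chosen against the expansion exponent $\theta$ so that $k(2\epsilon)^\theta\simeq 1$ independently of $k$. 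In the Cold case the controlled distribution yields a reverse H\"older bound $\fint\fint u^q\le C$ (Lemmas \ref{lem_5.3}, \ref{lem_5.5}), hence a bound on $\fint\fint|\X u|^{p-1}$ (Lemma \ref{lem_5.7}), hence near-conservation of the spatial average in time by testing the equation directly (Lemma \ref{lem_5.8}); only then does one find a time slice on which $\{u\ge M_c/4\}$ occupies a definite fraction of the ball, which starts the expansion of positivity. Your chaining/expansion discussion and the final trivial-case reduction are consistent with the paper, but without an argument replacing the Hot/Cold alternatives (or Kuusi's covering analysis) the proof does not go through.
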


%
%
%\begin{theorem}\label{theorem1.2a} Let $(\M,\mu,d)$ be a $p$-admissible structure in the sense of Definition \ref{admissible}, let $p$, $2\leq p<\infty$, be fixed, and assume that $\A$ satisfies the structure conditions \eqref{admissiblesym}. Let
%$K$ be a compact subset of $\M$ and let $R=R(\X,K)>0$ be as in Definition \ref{admissible}. Consider $x_0\in K$, $0<8{\hat r}<R$, $0<t_0<T_0$, assume $B(x_0,8{\hat r})\Subset K$, and let $u$ be a non-negative weak super-solution to \eqref{eq_theeq} in an open set which contains
%$\overline{B(x_0,8{\hat r})}\times [t_0,t_0+T_0]$. Then there exist constants $C_1,C_2 \geq 1$, depending only on $C_P,C_D,C_L, \A_0,\A_1, p$, such that
%if for a.e. $t_1\in (t_0, t_0+T_0)$ we set
%\begin{equation*}
%	T=\min\Bigg\{ T_0+t_0-t_1, C_1 {\hat r}^p \Bigg( \fint_{B(x_0,{\hat r})} u(x,t_1)d\mu\Bigg)^{2-p}\Bigg\},
%\end{equation*}
%and
%\begin{equation*}
%	Q=B(x_0,4{\hat r}) \times ( t_1+{T}/{2}, t_1+T),
%\end{equation*}
%then
%\begin{equation*}
%\fint_{B(x_0,{\hat r})} u(x,t_1)d\mu \le \Bigg(\frac{C_1 {\hat r}^p}{T_0+t_0-t_1} \Bigg)^{\frac{1}{p-2}} +C_2 \inf_Q u.
%\end{equation*}
%Furthermore, the constants $C_1,C_2$ can be chosen independently of $p$ as $p\to 2$.
%\end{theorem}
%
\begin{theorem}\label{theorem1.2b}
Let $(\M,\mu,d)$ be a $p$-admissible structure for some given $p\ge 2$. For a bounded open subset $\Omega\subset \M$, and $0<t_0<T_0$ consider a non-negative, weak sub-solution $u$ of \eqref{eq_theeq} in an open set containing the cylinder $\overline{B(x_0,8{r})}\times [t_0-T_0,t_0]$,
with $B(x_0,8{r})\subset \Omega$ and $0<r\le R(\X, \bar \Omega)$ (from Definition \ref{admissible}).

If the structure conditions \eqref{admissiblesym} are satisfied then
there exists a constant $C \geq 1$, depending only on $\data$, such that
	\begin{equation*}
		\sup_{Q} u \leq C \left ( \frac{{r}^p}{T_0} \right )^{\frac{1}{p-2}} + C \frac{T_0}{{r}^p} \left ( \sup_{t_0-T_0 < t < t_0} \fint_{B(x_0,{r})} u d\mu \right )^{p-1},
	\end{equation*}
	where $Q = B(x_0,{r}/2) \times (t_0 - T_0/2,t_0)$. Furthermore, the constant $C$ can be chosen independently of $p$ as $p\to 2$.
\end{theorem}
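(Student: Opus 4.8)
The plan is to deduce Theorem~\ref{theorem1.2b} from a scale--invariant local $L^1$--$L^\infty$ bound on intrinsically scaled cylinders, obtained by a De Giorgi iteration in the spirit of \cite{DiBenedettoGianazzaVespri1} and \cite{K}. After the standard reduction to a locally bounded $u$ --- replace $u$ by $\min(u,L)$, which is again a non-negative weak sub-solution of \eqref{eq_theeq} with the same structure constants and still satisfies $\fint_{B(x_0,r)}\min(u,L)\,d\mu\le M$, prove the estimate with constants independent of $L$, and let $L\to\infty$ --- the two analytic tools available in a $p$-admissible structure are the following. \emph{(Energy estimate.)} Testing \eqref{eq_sol} with $\eta=(u-k)_+\zeta^p$, where $\zeta(x,t)=\phi(x)\psi(t)$ with $\phi$ a Garofalo--Nhieu cut--off adapted to concentric metric balls (Lemma~\ref{lem_2.1}, so that $|\X\phi|$ is controlled by the reciprocal of the gap between the balls) and $\psi$ a Lipschitz time cut--off, and using \eqref{admissiblesym} together with a Steklov averaging to handle $\partial_t u$, yields a Caccioppoli inequality bounding $\sup_t\int(u-k)_+^2\zeta^p\,d\mu+\iint|\X((u-k)_+\zeta)|^p\,d\mu\,dt$ by $\iint(u-k)_+^p|\X\zeta|^p\,d\mu\,dt+\iint(u-k)_+^2|\partial_t(\zeta^p)|\,d\mu\,dt$, with constant depending only on $\A_0,\A_1,p$ and stable as $p\to2$. \emph{(Parabolic Sobolev.)} From \eqref{eq_D} and \eqref{eq_P} one has the Sobolev--Poincar\'e inequality of \cite[Theorem~13.1]{HK} on metric balls ($L^{p\kappa}$ in space, $\kappa=\kappa(C_D)>1$); interpolating it against the $L^\infty_tL^2_x$ information from the energy estimate and integrating in $t$ produces a gain--of--integrability inequality $\iint v^{p(1+\nu)}\,d\mu\,dt\lesssim\big(\sup_t\int v^2\,d\mu\big)^{\,\beta}\iint|\X v|^p\,d\mu\,dt$ for $v\in L^p(W^{1,p}_{\X,0})$, with $\nu=\nu(C_D)>0$ and $\beta$ explicit and bounded away from degeneracy as $p\to2$, up to normalising factors involving $|B(x_0,\rho)|$ and $\rho$.

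Next I would run the De Giorgi iteration. Fix $s\le t_0$, a radius $\rho$, and a parameter $\theta>0$, and work on the intrinsically scaled cylinders $Q_n=B(x_0,\rho_n)\times(s-\theta\rho_n^p,s)$ with $\rho_n=\tfrac\rho2(1+2^{-n})$ and levels $k_n=k(1-2^{-n})\uparrow k$. Using the Caccioppoli inequality between $Q_{n+1}$ and $Q_n$ (cut--off gradients $\lesssim2^n/\rho$, time derivative $\lesssim2^n/(\theta\rho^p)$), the elementary inequality $(u-k_{n+1})_+^2\le(k_{n+1}-k_n)^{2-p}(u-k_n)_+^p$ to turn the $L^2$ time--term into an $L^p$ term at the cost of a factor $k^{2-p}$, and imposing $k\ge\theta^{-1/(p-2)}$ so that the two terms on the right of the Caccioppoli inequality are comparable, and then combining with the parabolic Sobolev embedding and Chebyshev's inequality, yields a recursion $Y_{n+1}\le C\,b^{\,n}\,Y_n^{1+\nu}$ for a suitably normalised $Y_n$ built from $\iint_{Q_n}(u-k_n)_+^p\,d\mu\,dt$, with $C,b$ depending only on $\data$ and stable as $p\to2$. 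The fast geometric convergence lemma then gives: there is $\nu_0=\nu_0(\data)>0$ so that if $k\ge\theta^{-1/(p-2)}$ and $\iint_{B(x_0,\rho)\times(s-\theta\rho^p,s)}u^p\,d\mu\,dt$ lies below $\nu_0$ times the appropriate power of $k$ (and of $|B(x_0,\rho)|,\rho$), then $u\le k$ a.e.\ on $B(x_0,\rho/2)\times(s-\theta(\rho/2)^p,s)$; equivalently, one gets an $L^p_{\mathrm{loc}}$--$L^\infty_{\mathrm{loc}}$ bound. To pass from $L^p$ to $L^1$ one interpolates: writing $u^{q_0}=u^{q_0-1}\cdot u$ and bounding $u^{q_0-1}$ by $(\sup u)^{q_0-1}$ on a slightly larger cylinder, then re--absorbing the resulting power of $\sup u$ by iterating over a nested family of cylinders and using Young's inequality (exactly the mechanism underlying the $L^r_{\mathrm{loc}}$--$L^\infty_{\mathrm{loc}}$ estimates of \cite{DiBenedettoGianazzaVespri1}), one arrives at the building block
\begin{equation*}
\sup_{B(x_0,\rho/2)\times(s-\theta(\rho/2)^p,\,s)}u\ \le\ C\,\theta^{-\frac1{p-2}}+C\,\theta\bigg(\frac1{|B(x_0,\rho)|\,\theta\rho^p}\iint_{B(x_0,\rho)\times(s-\theta\rho^p,\,s)}u\,d\mu\,dt\bigg)^{p-1},
\end{equation*}
valid for every $\theta>0$ and every sufficiently small $\rho$, with $C=C(\data)$ stable as $p\to2$.

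Finally, Theorem~\ref{theorem1.2b} follows by applying the building block with $s=t_0$, $\rho=r$ and $\theta=T_0/r^p$ (replacing the resulting interval $(t_0-T_0/2^p,t_0)$ by $(t_0-T_0/2,t_0)$ at the cost of a harmless constant, e.g.\ by using $\theta=2^{p-1}T_0/r^p$ or by covering the target cylinder with finitely many time--translates, all of which stay inside $\overline{B(x_0,8r)}\times[t_0-T_0,t_0]$): the first term becomes $C(r^p/T_0)^{1/(p-2)}$, while $\frac1{|B(x_0,r)|\,T_0}\iint_{B(x_0,r)\times(t_0-T_0,t_0)}u\,d\mu\,dt\le\sup_{t_0-T_0<t<t_0}\fint_{B(x_0,r)}u\,d\mu$, so the second term is at most $C\tfrac{T_0}{r^p}\big(\sup_{t_0-T_0<t<t_0}\fint_{B(x_0,r)}u\,d\mu\big)^{p-1}$, as claimed; the sub--solution hypothesis enters only through the one--sided sign of \eqref{eq_sol}, which is what makes the Caccioppoli inequality for $(u-k)_+$ available. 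I expect the main obstacle to be the bookkeeping forced by the intrinsic (nonlinear) time scaling: the time--length of each De Giorgi cylinder must be chosen proportional to the correct power of the level, and one must check that the competing requirements --- geometric fit inside $\overline{B(x_0,8r)}\times[t_0-T_0,t_0]$, the balancing of the two Caccioppoli terms, the smallness threshold in the geometric convergence lemma, and the absorptions in the $L^p\to L^1$ interpolation --- are all met by the choices that produce precisely the two terms $(r^p/T_0)^{1/(p-2)}$ and $\tfrac{T_0}{r^p}M^{p-1}$. Keeping every constant (notably $\nu$, $\nu_0$, and the constant in the interpolation/absorption lemma) stable as $p\to2$, and tracking the non--Euclidean $\mu$--measure normalisations throughout, are the secondary technical difficulties.
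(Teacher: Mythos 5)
The decisive step of your argument is the ``building block''
\begin{equation*}
\sup_{B(x_0,\rho/2)\times(s-\theta(\rho/2)^p,\,s)}u\ \le\ C\,\theta^{-\frac1{p-2}}+C\,\theta\Big(\tfrac1{|B(x_0,\rho)|\,\theta\rho^p}\iint_{B(x_0,\rho)\times(s-\theta\rho^p,s)}u\,d\mu\,dt\Big)^{p-1},
\end{equation*}
i.e.\ a sup bound by the \emph{space--time} $L^1$ average raised to the super-linear power $p-1$ with prefactor $\theta$. This inequality is false for $p>2$, and the DGV interpolation/absorption mechanism you invoke does not produce it. Already in the Euclidean model case $\M=\R$, $\X=\partial_x$, $\mu=$ Lebesgue, $p=3$, the function $u(x,t)=A(4At-x)_+^2$ is a non-negative weak solution of $u_t=\partial_x(|u_x|u_x)$ on all of $\R\times\R$ (the flux $|u_x|u_x=-4A^2(4At-x)_+^2$ vanishes continuously across the front $x=4At$). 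Take $s=0$, $\rho=1$, $\theta=1$, $A$ large: then $\sup_{B(0,1/2)\times(-1/8,0)}u\ge u(-1/2,0)=A/4$, while the front lies outside $B(0,1)$ for all $t<-1/(4A)$, so $\iint_{B(0,1)\times(-1,0)}u\,dx\,dt=1/48$ and the right-hand side of your building block is bounded independently of $A$. The mechanism is finite speed of propagation: a front can sweep into the ball only near the terminal time, so the space--time $L^1$ average cannot control the terminal sup with power $p-1$. What the absorption argument you describe actually yields is the DGV form $\sup\le C\theta^{-1/(p-2)}+C\,\theta^{-N/\lambda_1}\big(\fint\!\fint u\,d\mu\,dt\big)^{p/\lambda_1}$ with $\lambda_1=N(p-2)+p$, whose sublinear exponent is consistent with this example.

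The gap is repairable in two ways, and then your De Giorgi route would work. Either carry the quantity $\sup_{t}\fint_{B(x_0,\rho)}u\,d\mu$ (not the space--time average) through the $L^p\!\to\!L^1$ reduction --- this is Kuusi's form and is exactly what the paper does, via a Moser-type iteration on powers $u^{p-1+\xi}$ (Lemma \ref{lem_cacc} with $\xi\ge1$ and Lemma \ref{lem_parabolic:sobolev}) homogenized by the auxiliary lower bound $u\ge(r^p/T_0)^{1/(p-2)}$, i.e.\ Lemma \ref{lem_Mos1} followed by the interpolation Lemma \ref{lem_weak:Harnack:sub}; or keep the correct DGV space--time form, bound $\fint\!\fint u\le\sup_t\fint_{B}u=:M$, and finish with Young's inequality, which works because the exponent pair matches identically:
\begin{equation*}
\theta^{-\frac{N}{\lambda_1}}M^{\frac{p}{\lambda_1}}=\big(\theta^{-\frac1{p-2}}\big)^{\lambda}\big(\theta M^{p-1}\big)^{1-\lambda}\le \theta^{-\frac1{p-2}}+\theta M^{p-1},\qquad \lambda=1-\tfrac{p}{(p-1)\lambda_1}\in[0,1).
\end{equation*}
(Also, your alternative choice $\theta=2^{p-1}T_0/r^p$ makes the big cylinder leave $[t_0-T_0,t_0]$; use the covering by time-translates with $\theta\simeq T_0/r^p$ instead.) As written, however, the stated building block on which your final step relies is not a true inequality, so the proposal does not yet prove Theorem \ref{theorem1.2b}.
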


%
%\begin{theorem}\label{theorem1.2b} Let $(\M,\mu,d)$ be a $p$-admissible structure in the sense of Definition \ref{admissible}, let $p$, $2\leq p<\infty$, be fixed, and assume that $\A$ satisfies the structure conditions \eqref{admissiblesym}. Let
%$K$ be a compact subset of $\M$ and let $R=R(\X,K)>0$ be as in Definition \ref{admissible}. Consider $x_0\in K$, $0<8{\hat r}<R$, $0<t_0<T_0$, assume $B(x_0,8{\hat r})\Subset K$, and let $u$ be a non-negative weak sub-solution to \eqref{eq_theeq} in an open set which contains $\overline{B(x_0,8{\hat r})}\times [t_0-T_0,t_0]$. Then there exists a constant $C \geq 1$, depending only on $C_P,C_D,C_L, \A_0,\A_1, p$, such that
%	\begin{equation*}
%		\sup_{Q} u \leq C \left ( \frac{{\hat r}^p}{T_0} \right )^{\frac{1}{p-2}} + C \frac{T_0}{{\hat r}^p} \left ( \sup_{t_0-T_0 < t < t_0} \fint_{B(x_0,{\hat r})} u d\mu \right )^{p-1},
%	\end{equation*}
%	where $Q = B(x_0,{\hat r}/2) \times (t_0 - T_0/2,t_0)$. Furthermore, the constant $C$ can be chosen independently of $p$ as $p\to 2$.
%\end{theorem}
%

Our proofs of Theorem \ref{theorem1.2a} and Theorem \ref{theorem1.2b} are loosely based on the strategy developed in \cite{K}, but also rely on the extension of certain arguments introduced in \cite{DiBenedetto} and \cite{DGVbook}.
Among our contributions, we single out exactly which assumptions are needed on the underlying geometry for the the results to hold. In particular, we
 modify the existing Euclidean arguments so they can be used in our broader setting, where rescalings in the space variables are not allowed and where there is no underlying group structure. Since, for $p>2$, time and space scaling are related, this rigidity introduces a further layer of technical difficulties.

% I AM NOT SURE THE NEXT SENTENCE WOULD HELP
%

%Moreover, in the course of our proofs we refine a previous metric space version (\cite[Lemma 2.5]{KMMP}) of the so called {\it clustering lemma } (Lemma \ref{lem_local_clust}) and use it to simplify the hot alternative argument as outlined in \cite{DGVbook}.\\

The key steps in this proof are as follows.\\

\noindent
{\bf Expansion of positivity.} The important result here is Lemma \ref{lem_expofpos}. Indeed, to formulate an enlightening consequence of this lemma, let $Q \equiv B(x_0,4r_0) \times (t_0, t_0 + T_0)$, $B(x_0,4r_0) \Subset \Omega$, $0 < 4r_0 < R$, and let $u$ be a non-negative weak super-solution to \eqref{eq_theeq} in an open set containing $\overline{Q}$. Suppose that $t_0$ is a Lebesgue instant (see Definition \ref{lebesgue-instant}) for $u$ and
\begin{equation} \label{eq1}
	\big |\big \{x \in B(x_0,r): u(x,t_0) > M \big \}\big | \geq \delta |B(x_0,r)|.
\end{equation}
	for some $0 < r < r_0$, $M > 0$ and $0 < \delta < 1$. Then the conclusion is that there exists a positive constant $C$, independent of
$u$, $r$, $x_0$, $M$, $t_0$, $T_0$, but depending on $\delta$ and other structural parameters, so that
\begin{equation} \label{eq2}
	\inf_{B(x_0,r)} u(x,t_0+C M^{2-p} r^p ) \geq M.
\end{equation}
In particular, by expansion of positivity we mean that if $u(x,t_0)$ is large, on a substantial part of the ball $B(x_0,r)$, then we can use this
to derive a pointwise bound from below at the future instance defined by $t_0+CM^{2-p} r^p$. The proof of the estimate first uses a Caccioppoli inequality, together with the annular decay property stated in Lemma \ref{lem_2.1}, to conclude, (see Lemma \ref{lem_positivity} for the general statement), that
\begin{equation} \label{eq1uu}
	|\{x \in B(x_0,r): u(x,t) > \delta M/8 \}| \geq \frac{\delta}8|B(x_0,r)|,
\end{equation}
for all Lebesgue instants $t$ for $u$ satisfying $t_0 < t < M^{2-p} \delta^{p/\hat \delta+1} r^p / C$ where $\hat\delta$ occurs in the statement of the
annular decay property. Using \eqref{eq1uu}, and a special change of variables $t\to \Lambda(t)=\tau$, which exactly cancels the decay of the super-solution, but preserves the property of the function being a non-negative weak super-solution, one is able to conclude, this is Lemma \ref{lem_combo}, that the new super-solution $v$ satisfies
\begin{equation}\label{gh1}
		|\{ x \in B(x_0,r) : v(x,\tau) > 1 \}| \geq \nu |B(x_0,3r)|,
	\end{equation}
	for almost every $\tau^\ast \equiv \Lambda(t_0) < t < \Lambda (\hat T)$. Using \eqref{gh1} one can then, again via Caccioppoli inequalities, prove that for a Lebesgue instant in the future, the set where the super-solution is small can be made arbitrarily small in measure. This is then used to start a De Giorgi type iteration to conclude that the set where the function is small is zero in measure and hence, subsequently, obtaining \eqref{eq2}.\\

\noindent
{\bf Hot and Cold alternatives.} Based on the result concerning the expansion of positivity the proof of Theorem \ref{theorem1.2a} reduces, after some additional preliminary steps, to the consideration of two alternatives, Hot and Cold. Roughly speaking, in the first alternative, Hot, there is a time slice such that the solution is large in the sense that there exists a Lebesgue instant $t_0^\ast$ for $u$ satisfying $0 < t_0^\ast < C {r}^p$, such that
	\begin{equation} \label{eq4}
		|\{x \in B(x_0,r): u(x,t_0^\ast) > 8k^{1+\sigma} \}| > 8k^{-\sigma} |B(x_0,r)|,
	\end{equation}
	holds for some $k > 8^{1/\sigma}$, see Lemma \ref{hot}. In the second alternative, Cold, we have that
\begin{equation} \label{eq4te}
		|\{x \in B(x_0,r): u(x,t) > 8k^{1+\sigma} \}| \leq 8k^{-\sigma} |B(x_0,r)|,
	\end{equation}
	holds for every $k > 8^{1/\sigma}$ and for almost all $t$, $0 < t < C {r}^p$, see Lemma \ref{lem_5.1}. In either situation the goal is to be able to start the expansion of positivity, in order to establish
the existence of an instant $t_0$ at which the super-solution satisfies \eqref{eq1}. We next briefly discusses the underlying arguments used in the alternatives, Hot and Cold.
	\begin{enumerate}
		\item {\bf Hot:} We use a clustering lemma, see Lemma \ref{lem_local_clust}, to first to obtain, using \eqref{eq4}, a small ball in the time-slice $t_0^\ast$, in which $u$ satisfies \eqref{eq1} with $M=4k$ and $\delta = 1/2$. Our proof here is different to the proof developed in \cite{K} which is based on a covering type lemma together with a delicate analysis. The usage of Lemma \ref{lem_local_clust}, as an alternative to a covering type lemma, was first mentioned in \cite{DGVbook}.
		\item {\bf Cold:} As it turns out, \eqref{eq4te} implies that the Sobolev norm of a super-solution is small and that its average in space does not change to much in time. Thus the function has large average for each time-slice in some parabolic cylinder. Together with the small Sobolev norm one is then able to obtain that there is a time-slice inside this parabolic cylinder which satisfies \eqref{eq1}.
	\end{enumerate}

\section{Basic estimates}

\noindent
Throughout this section we will assume that $(\M,\mu,d)$ is a $p$-admissible structure for some $p\ge 1$, in the sense of Definition \ref{admissible}. We will also assume that $\Omega$ is a bounded open set in $\M$ and set $K=\bar\Omega$. The constants $C_D, C_P,$ and $R$ in Definition \ref{admissible} will all depend on $K$. Unless otherwise stated we let $C\geq 1$ denote a constant depending only on $C_D,C_P,p$, not necessarily the same at each occurrence.

\begin{lemma} \label{lem_2.1}
If $x \in K$ and $0 < s < r < R$, then the following holds.
	\begin{enumerate}
		\item There exists a constant $N = N(C_D) > 0$, called homogeneous dimension of $K$ with respect to $(\X, d, \mu)$, such that $|B(x,r)| \leq C_D \tau^{-N} |B(x,\tau r)|$, for all $0 < \tau \leq 1$.
		\item There exists a continuous function $\phi \in C_{0}(B(x,r))\cap W_{\X}^{1,\infty}(B(x,r))$ and a constant $C = C(\X,K) > 0$, such that $\phi = 1$ in $B(x,s)$ and $|\X \phi|\leq C/(r-s)$, $0 \leq \phi \leq 1$.
		\item Metric balls have the so called $\hat \delta-$annular decay property, i.e., there exists $\hat \delta = \hat \delta(C_D) \in (0,1]$, such that
		\begin{equation*}
			|B(x,r)\setminus B(x,(1-\epsilon)r)| \leq C \epsilon^{\hat \delta} |B(x,r)|,
		\end{equation*}
whenever $0 < \epsilon < 1$.
	\end{enumerate}
\end{lemma}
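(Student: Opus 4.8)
The plan is to prove the three items separately, each being a standard consequence of the doubling property \eqref{eq_D}, the topological hypothesis \eqref{topology}, and — for item (2) — the construction of Lipschitz cut-off functions due to Garofalo and Nhieu. For item (1), I would iterate the doubling inequality \eqref{eq_D}: writing $\tau\in(0,1]$ and choosing the integer $k\ge 0$ with $2^{-(k+1)}<\tau\le 2^{-k}$, one gets $|B(x,r)|\le C_D^{k}|B(x,2^{-k}r)|\le C_D^{k}|B(x,\tau r)\cdot (2^{-k}/\tau)|$... more carefully, since $\tau r\le 2^{-k}r$ one has $B(x,\tau r)\subset B(x,2^{-k}r)$, so I instead bound $|B(x,2^{-k}r)|\le |B(x,r)|$ trivially and run the doubling the other way: $|B(x,r)|=|B(x,2^{k}\cdot 2^{-k}r)|\le C_D^{k}|B(x,2^{-k}r)|$, and then since $2^{-k}r< 2\tau r$, one more doubling step gives $|B(x,2^{-k}r)|\le |B(x,2\tau r)|\le C_D|B(x,\tau r)|$. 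Combining, $|B(x,r)|\le C_D^{k+1}|B(x,\tau r)|$. Finally $k\le \log_2(1/\tau)$, so $C_D^{k}=2^{k\log_2 C_D}\le \tau^{-\log_2 C_D}$, and setting $N:=\log_2 C_D$ yields $|B(x,r)|\le C_D\,\tau^{-N}|B(x,\tau r)|$. One should check the radii stay below $R$: this is fine because all radii involved are at most $2r<2R$, and by a harmless adjustment of $R$ (or by noting the doubling holds up to $R$ and $r<R$) the argument goes through; alternatively restrict to $r<R/2$ and absorb the factor into the constant.

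For item (2), I would simply invoke the cut-off construction of Garofalo and Nhieu, \cite[Theorem 1.5]{GN2} (cited in the introduction as the source of the gradient bound on cut-off functions): given the metric balls $B(x,s)\Subset B(x,r)$ — recall by \eqref{topology} the closed ball $\overline{B(x,s)}$ is compact and contained in the open ball $B(x,r)$ — there exists $\phi\in C_0(B(x,r))\cap W^{1,\infty}_{\X}(B(x,r))$ with $0\le\phi\le1$, $\phi\equiv1$ on $B(x,s)$, and $|\X\phi|\le C/(r-s)$, the constant $C$ depending on $\X$ and $K$ through the Lipschitz constants of the vector fields. This is exactly the statement quoted, so the "proof" is a citation together with the observation that the compactness furnished by \eqref{topology} makes the construction applicable on the relevant balls.

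For item (3), the $\hat\delta$-annular decay property, I would use the now-standard fact that a doubling metric-measure space has this property; the cleanest route is to quote the result of Buckley, or of Colding–Minicozzi / Tessera, that every doubling space enjoys annular decay with $\hat\delta=\hat\delta(C_D)$ depending only on the doubling constant. If a self-contained argument is wanted, one covers the annulus $B(x,r)\setminus B(x,(1-\epsilon)r)$ by a controlled number ($\lesssim \epsilon^{-N}$) of balls of radius $\sim\epsilon r$ centered in the annulus, each of measure $\lesssim \epsilon^{\hat\delta'}|B(x,r)|$ by item (1) applied at scale $r$ versus $\epsilon r$, and then optimizes; but producing the genuinely \emph{small} exponent $\hat\delta$ (rather than just $\le C$) requires the more delicate chaining argument of Buckley, so here I would cite rather than reprove. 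I expect item (3) to be the main obstacle in the sense that it is the only one whose proof is not a couple of lines — and accordingly the natural choice is to cite the literature, as the remainder of the paper only uses the \emph{existence} of such a $\hat\delta$.
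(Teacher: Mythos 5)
Your proposal follows essentially the same route as the paper's proof: item (1) by iterating the doubling inequality \eqref{eq_D} (the paper dismisses this as ``a standard iteration argument''), item (2) by quoting \cite[Theorem 1.5]{GN2}, and item (3) by quoting Buckley. One correction for item (3): it is \emph{not} true that every doubling metric measure space has the annular decay property (a disconnected doubling space, e.g.\ two points with counting measure, is a counterexample), and your covering sketch, as you yourself note, cannot produce a positive exponent; Buckley's result \cite[Corollary 2.2]{Bu} requires a length-space (or chain) condition, which is satisfied here precisely because $d$ is a Carnot--Carath\'eodory control metric --- this is the point the paper records with the phrase ``since we have a CC space,'' and it must be stated for the citation to apply. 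With that justification supplied your argument is complete; also, your hedging about radii exceeding $R$ in item (1) is unnecessary, since every doubling step in your iteration is applied at a radius strictly smaller than $r<R$.
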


\begin{proof}
	Statement (1) follows from \eqref{eq_D} by a standard iteration argument. Statement (2) is proved in \cite[Theorem 1.5]{GN2}.
%	For the reader's convenience we briefly recall that from the result in \cite{GN2}[Theorem 1.3] one has that
%	$d(\cdot, x)$ is differentiable a.e. (with respect to Lebesgue measure and hence with respect to $d\mu$) in $K$ and for some $C_L=C_L(\X,K)>0$,
%%\begin{equation} \tag{L} \label{eq_L}
%\begin{equation} \label{eq_L}
%	\text{ $\|\X d(\cdot,x) \|_{L^\infty (K)} \leq C_L$\mbox{ whenever $x \in K$}.}
%\end{equation}
%From the latter, proceeding in a standard fashion we let $\psi:\mathbb R\to\mathbb R$, $\psi\in C_0^{\infty} ([-r,r])$, be such that $\psi = 1$ in $[-s,s]$, and $|\psi'|\leq C/(r-s)$. Fix $x \in K$ and consider $\phi:\mathbb R^n\to\mathbb R$ defined as $\phi(y) = \psi(d(x,y))$. Then $\phi \in C_{0}(B(x,r))$ and
%	\begin{equation*}
%		(\X \phi)(\cdot) = \X d(x,\cdot) \cdot \psi'(d(x,\cdot)).
%	\end{equation*}
%	The latter immediately gives, by \eqref{eq_L}, that $|\X \phi| \leq C/(r-s)$. Hence, $$\phi \in C_{0}(B(x,r))\cap W_{\X}^{1,\infty}(B(x,r)).$$
Statement 	(3) follows from \cite[Corollary 2.2]{Bu}, since we have a \CC space. Furthermore, $\hat \delta$ depends only on $C_D$.
\end{proof}

\begin{remark} %Note that the constant $N$ in Lemma \ref{lem_2.1}, the homogeneous dimension, depends on $K$ and at least some of the parameters in Definition \ref{admissible}.
	From now on, and in the subsequent lemmas, $N$ will play the role of the underlying dimension.
\end{remark}

\subsection{Parabolic Sobolev estimates}
We begin by stating a  result which is plays a fundamental role in  the development of  analysis on metric spaces. Note, in view of  \cite[Corollary 9.5]{HK}, that the metric balls  $B(x_0,r)$ are John domains. Hence, in view of properties \eqref{eq_D} and \eqref{eq_P}, and \cite[Theorem 9.7]{HK} one obtains the following  Sobolev-Poincar\'e inequality,  
\begin{lemma} \label{lem_sobolev}
	Let $B(x_0,r) \subset \Omega$, $0 < r < R$, $1 \leq p<\infty$. There exists a constant $C = C(C_D,C_P,p) \geq 1$ such that for every  $u \in W_{\X}^{1,p} (B(x_0,r))$,
	\begin{equation*}
		\left ( \fint_{B(x_0,r)} |u-u_B|^{\kappa p} d\mu \right )^{1/\kappa} \leq C r^p \fint_{B(x_0,r)} |\X u|^p d\mu,
	\end{equation*}
	where $u_B$ denotes the $\mu$ average of $u$ over $B(x_0,r)$,  and where $1 \leq \kappa \leq {N}/{(N-p)}$, if $1 \leq p < N$, and $1 \leq \kappa < \infty$, if $p \geq N$. Moreover, 
	\begin{equation*}
		\left ( \fint_{B(x_0,r)} |u|^{\kappa p} d\mu \right )^{1/\kappa} \leq C r^p \fint_{B(x_0,r)} |\X u|^p d\mu,
	\end{equation*}
	whenever $u \in W_{\X,0}^{1,p} (B(x_0,r))$.
\end{lemma}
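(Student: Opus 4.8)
The plan is to read both inequalities off the Sobolev--Poincar\'e theory of Haj\l{}asz and Koskela, as anticipated in the remark preceding the statement; nothing beyond that theory is needed, and the work lies in checking that its hypotheses apply with constants of the asserted form. \emph{Step 1} is to observe that every metric ball $B=B(x_0,r)$, $x_0\in\Omega$, $0<r<R$, is a John domain with an absolute John constant. Since $(\M,d)$ is a \CC space, in particular a length space, and $\mu$ is doubling, this is \cite[Corollary 9.5]{HK}; concretely, given $y\in B$ one joins $y$ to the centre $x_0$ by a horizontal curve of length $<r$, such a curve automatically stays in $B$ (the distance from $x_0$ to a point $z$ on it is at most the length of the initial sub-arc, hence $<r$), and along it $\operatorname{dist}(z,\partial B)\ge r-d(x_0,z)$ dominates the length of the sub-arc from $y$ to $z$.

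\emph{Step 2} is to apply \cite[Theorem 9.7]{HK} to $B$, with the homogeneous dimension $N$ of Lemma~\ref{lem_2.1}(1) playing the role of the dimensional exponent. The mechanism behind that theorem is a two-stage self-improvement: on the John (hence Boman chain) domain $B$, the \emph{weak} $(1,p)$-Poincar\'e inequality \eqref{eq_P} --- whose right-hand side involves the dilated ball $B(x_0,2r)$ --- chains up over a Whitney-type decomposition to a \emph{genuine} $(1,p)$-Poincar\'e inequality on $B$ with no dilation; then a truncation/Maz'ya argument, using the lower volume bound of Lemma~\ref{lem_2.1}(1), raises the left-hand exponent from $p$ to $\kappa p$, where $\kappa p = Np/(N-p)$ if $1\le p<N$ and $\kappa$ is any prescribed finite number if $p\ge N$. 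One obtains
\begin{equation*}
	\left(\fint_{B}|u-u_B|^{\kappa p}\,d\mu\right)^{1/(\kappa p)} \le C\,r\left(\fint_{B}|\X u|^p\,d\mu\right)^{1/p},
\end{equation*}
and raising to the power $p$ gives the first inequality for $\kappa=N/(N-p)$ (resp.\ any fixed finite $\kappa$ when $p\ge N$); the intermediate range $1\le\kappa\le N/(N-p)$ then follows at once from H\"older's inequality applied to the normalized measure $\fint_B\,d\mu$.

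\emph{Step 3} is the $W_{\X,0}^{1,p}$ case, i.e.\ the Sobolev inequality for functions ``vanishing on the boundary.'' I would extend $u\in W_{\X,0}^{1,p}(B)$ by zero to $B(x_0,2r)$; by the definition of $W_{\X,0}^{1,p}(B)$ as the closure of $W_\X^{1,p}$ functions with compact distributional support in $B$, this extension lies in $W_\X^{1,p}(B(x_0,2r))$ with horizontal gradient $\chi_B\,\X u$. Applying Step 2 on $B(x_0,2r)$ and using \eqref{eq_D} to trade averages over $B(x_0,2r)$ for averages over $B$ bounds $\big(\fint_B|\tilde u-\tilde u_{B(x_0,2r)}|^{\kappa p}\,d\mu\big)^{1/\kappa}$ by $C r^p\fint_B|\X u|^p\,d\mu$; one then disposes of the mean $\tilde u_{B(x_0,2r)}$ by noting that $\tilde u$ vanishes on $B(x_0,2r)\setminus B$, a set of $\mu$-measure comparable to $|B(x_0,2r)|$ by reverse doubling on the connected ball $B(x_0,2r)$, so that $|\tilde u_{B(x_0,2r)}|^{\kappa p}$ is controlled by the average over that set of $|\tilde u-\tilde u_{B(x_0,2r)}|^{\kappa p}$, hence by the same right-hand side; the triangle inequality finishes it. This step is again standard; see \cite[Chapter 9]{HK}.

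The only genuine obstacle is bookkeeping rather than analysis: one must verify that the John constant, the Boman chain constants, the dilation factor absorbed in the weak-to-strong Poincar\'e passage, and the cost of the exponent upgrade all depend quantitatively only on $C_D$, $C_P$ and $p$, so that the final constant in the lemma is indeed of the form $C(C_D,C_P,p)$. All of these quantitative dependencies are precisely what \cite[Chapter 9]{HK} provides, so no estimate beyond those cited results is required.
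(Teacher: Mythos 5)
Your proposal is correct and follows essentially the same route as the paper, which proves the lemma simply by noting (as in the remark preceding the statement) that metric balls are John domains by \cite[Corollary 9.5]{HK} and then invoking \cite[Theorem 9.7]{HK} together with \eqref{eq_D} and \eqref{eq_P}. The extra detail you supply in Step 3 for the $W_{\X,0}^{1,p}$ case (zero extension plus doubling/reverse-doubling to remove the mean) is a standard elaboration of what the paper leaves implicit in the same citation.
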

%\begin{proof}
%%By using \eqref{eq_S} for any ball compactly containing $B(x_0,r)$, it suffices to establish the stated Sobolev-Poincar\'e inequality for $u \in C^\infty (B(x_0,r))$.
%Note, using \cite[Corollary 9.5]{HK}, that $B(x_0,r)$ is a John domain. Hence, for $u \in W_{\X}^{1,p} (B(x_0,r))$ the inequality is a consequence of \eqref{eq_D} and \eqref{eq_P}, see \cite[Theorem 9.7]{HK}.
%\end{proof}

We will also need the following  corollaries and reformulations of the Sobolev estimates. 

\begin{lemma} \label{lempoinmeas}
	Let $B(x_0,r) \subset \Omega$, $0 < r < R$, $1 \leq p<\infty$. Consider $u \in W_{\X}^{1,p}(B(x_0,r))$, let $A=\{x\in B(x_0,r): u=0\}$, and assume that $|A| > 0$. There exists a constant $C=C(C_D,C_P,p) \geq 1$ such that
	\begin{equation*}
		\left ( \fint_{B(x_0,r)} |u|^{\kappa p} d\mu \right )^{\frac{1}{\kappa p}} \leq C r \left ( \frac{|B(x_0,r)|}{|A|} \right )^{\frac{1}{\kappa p}} \left ( \fint_{B(x_0,r)} |\X u|^p d\mu
 \right )^{1/p},
	\end{equation*}
	whenever $1 \leq \kappa \leq {N}/{(N-p)}$, if $1 \leq p < N$, and $1 \leq \kappa < \infty$, if $p \geq N$.
\end{lemma}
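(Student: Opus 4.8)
The plan is to deduce Lemma \ref{lempoinmeas} directly from the Sobolev--Poincar\'e inequality of Lemma \ref{lem_sobolev} via a Poincar\'e--Sobolev inequality for functions vanishing on a set of positive measure. First I would observe that since $u$ vanishes on $A$ with $|A|>0$, the average $u_B$ is controlled by the oscillation of $u$ in the following sense: writing $u = (u - u_B) + u_B$, the fact that $u\equiv 0$ on $A$ gives $|u_B|\cdot|A| \le \int_A |u - u_B|\,d\mu \le \int_{B(x_0,r)} |u-u_B|\,d\mu$, hence
\begin{equation*}
	|u_B| \le \frac{1}{|A|}\int_{B(x_0,r)}|u - u_B|\,d\mu = \frac{|B(x_0,r)|}{|A|}\fint_{B(x_0,r)}|u-u_B|\,d\mu.
\end{equation*}

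Next I would estimate the left-hand side by the triangle inequality in $L^{\kappa p}$:
\begin{equation*}
	\left(\fint_{B(x_0,r)}|u|^{\kappa p}\,d\mu\right)^{\frac{1}{\kappa p}} \le \left(\fint_{B(x_0,r)}|u-u_B|^{\kappa p}\,d\mu\right)^{\frac{1}{\kappa p}} + |u_B|.
\end{equation*}
The first term on the right is bounded using Lemma \ref{lem_sobolev}, which gives $\big(\fint |u-u_B|^{\kappa p}\big)^{1/(\kappa p)} \le C^{1/p} r \big(\fint |\X u|^p\big)^{1/p}$. For the second term, I would first apply H\"older's inequality to pass from the $L^1$-average of $|u-u_B|$ to its $L^{\kappa p}$-average, namely $\fint|u-u_B|\,d\mu \le \big(\fint |u-u_B|^{\kappa p}\,d\mu\big)^{1/(\kappa p)}$ (valid since $\kappa p \ge 1$ and we use a normalized average), then again invoke Lemma \ref{lem_sobolev}. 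Combining, both terms are controlled by $C r \big(\fint|\X u|^p\big)^{1/p}$ times the factor $|B(x_0,r)|/|A|$ (or its $1/(\kappa p)$-power); since $|B(x_0,r)|/|A| \ge 1$, one may replace $|B(x_0,r)|/|A|$ by $(|B(x_0,r)|/|A|)^{1/(\kappa p)}$ only if $1/(\kappa p)\le 1$, which holds, so the weaker exponent $1/(\kappa p)$ stated in the lemma indeed suffices after bounding the honest factor $|B(x_0,r)|/|A|$ below by its $1/(\kappa p)$-power --- wait, this requires care: since $|B(x_0,r)|/|A| \ge 1$ we have $|B(x_0,r)|/|A| \ge (|B(x_0,r)|/|A|)^{1/(\kappa p)}$, so passing to the stated exponent is the \emph{stronger} direction and is not automatic. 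The resolution is that the $|u_B|$-term genuinely carries the full factor $|B(x_0,r)|/|A|$, but one can sharpen the bound on $|u_B|$: instead of H\"older to $L^1$, bound $|u_B|^{\kappa p}|A| \le \int_A|u-u_B|^{\kappa p}\,d\mu \le \int_B |u-u_B|^{\kappa p}\,d\mu$, giving $|u_B| \le (|B(x_0,r)|/|A|)^{1/(\kappa p)}\big(\fint_B|u-u_B|^{\kappa p}\,d\mu\big)^{1/(\kappa p)}$, which is exactly the form needed. Then Lemma \ref{lem_sobolev} finishes it.

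So the streamlined steps are: (1) the elementary bound $|u_B|^{\kappa p}|A| \le \int_{B(x_0,r)}|u-u_B|^{\kappa p}\,d\mu$ using $u|_A = 0$; (2) triangle inequality $\|u\|_{L^{\kappa p}(\text{avg})} \le \|u-u_B\|_{L^{\kappa p}(\text{avg})} + |u_B|$; (3) substitute the bound from (1) for $|u_B|$, using that $|B(x_0,r)|/|A| \ge 1$ so the $u-u_B$ term also picks up the harmless factor $(|B(x_0,r)|/|A|)^{1/(\kappa p)}$; (4) apply Lemma \ref{lem_sobolev} to $\big(\fint|u-u_B|^{\kappa p}\,d\mu\big)^{1/(\kappa p)} \le (Cr^p\fint|\X u|^p)^{1/(\kappa p)}$ --- note the exponent on the right of Lemma \ref{lem_sobolev} is $1/\kappa$, so taking the further $1/p$ power gives $1/(\kappa p)$ and one recovers $Cr\big(\fint|\X u|^p\big)^{1/p}$ after adjusting constants. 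I do not expect any serious obstacle here; the only point requiring mild attention is bookkeeping the exponents and confirming that $|B(x_0,r)|/|A|\ge 1$ lets us uniformly attach the factor $(|B(x_0,r)|/|A|)^{1/(\kappa p)}$ to every term, and that all constants depend only on $C_D$, $C_P$, $p$ (inherited from Lemma \ref{lem_sobolev}).
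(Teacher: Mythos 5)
Your streamlined argument is correct and is essentially the paper's own proof: the paper likewise uses $u|_A=0$ to get $|u_B|\,|A|^{1/(\kappa p)}\le\bigl(\int_A|u-u_B|^{\kappa p}\,d\mu\bigr)^{1/(\kappa p)}$, then the triangle inequality (with $|B(x_0,r)|/|A|\ge 1$ absorbing both terms into the factor $2(|B(x_0,r)|/|A|)^{1/(\kappa p)}$), and finally Lemma \ref{lem_sobolev}. The only difference is your initial $L^1$ detour, which you correctly identified as insufficient and repaired before finishing.
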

\begin{proof} We let $u_B$ be the $\mu$-average of $u$ over the ball $B=B(x_0,r)$. Then by the definition of the set $A$ and $\kappa$ as in the statement of the lemma we first note that
	\begin{equation} \label{lem2.3eq1}
		|u_B| |A|^{\frac{1}{\kappa p}} \leq \left ( \int_{A} |u-u_B|^{\kappa p} d\mu \right )^{\frac{1}{\kappa p}}.
	\end{equation}
Using \eqref{lem2.3eq1} and the triangle inequality we see that
	\begin{eqnarray*}
		\left ( \fint_{B(x_0,r)} |u|^{\kappa p} d\mu \right )^{\frac{1}{\kappa p}} \leq 2 \left ( \frac{|B(x_0,r)|}{|A|} \right )^{\frac{1}{\kappa p}}\left ( \fint_{B(x_0,r)} |u-u_B|^{\kappa p} d\mu \right )^{\frac{1}{\kappa p}}.
	\end{eqnarray*}
The lemma now follows from H{\"o}lder's inequality and Lemma \ref{lem_sobolev}.
\end{proof}
%
%\begin{lemma} \label{lem_sobolev}
%	Let $B(x_0,r) \subset \Omega$, $0 < r < R$, $1 \leq p<\infty$. There exists a constant $C = C(C_D,C_P,p) \geq 1$ such that
%	\begin{equation*}
%		\left ( \fint_{B(x_0,r)} |u|^{\kappa p} d\mu \right )^{1/\kappa} \leq C r^p \fint_{B(x_0,r)} |\X u|^p d\mu,
%	\end{equation*}
%	whenever $u \in W_{\X,0}^{1,p} (B(x_0,r))$, and where $1 \leq \kappa \leq {N}/{(N-p)}$, if $1 \leq p < N$, and $1 \leq \kappa < \infty$, if $p \geq N$.
%\end{lemma}
%\begin{proof}
%	Follows in a standard way from Lemma \ref{lempoinmeas} together with \eqref{eq_D}.
%\end{proof}

\begin{lemma} \label{lem_parabolic:sobolev} Let $B(x_0,r) \subset \Omega$, $0 < r < R$, $1 \leq p<\infty$. Let $1 \leq \kappa < \infty$ and define
 $\kappa^\ast={N}/{(N-p)}$, if $1 \leq p < N$, and $\kappa^\ast=2$ if $p\geq N$. There exists a constant $C=C(C_D,C_P,p) \geq 1$ such that
	\begin{eqnarray*}
		\int_{t_1}^{t_2} \fint_{B(x_0,r)} |u|^{\kappa p} d\mu dt&\leq& C r^p \int_{t_1}^{t_2} \fint_{B(x_0,r)} |\X u|^{p} d\mu dt\notag\\
 &&\times\left ( \sup_{t_1 < t < t_2} \fint_{B(x_0,r)} |u|^{p \frac{(\kappa-1) \kappa^\ast}{\kappa^\ast-1}} d\mu \right )^{\frac{\kappa^\ast-1}{\kappa^\ast}},
	\end{eqnarray*}
for every $u \in L^p(t_1,t_2;W_{\X,0}^{1,p}(B(x_0,r)))$.
\end{lemma}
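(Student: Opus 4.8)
The plan is to establish the inequality pointwise in $t$, by combining H\"older's inequality with the zero--trace Sobolev--Poincar\'e inequality of Lemma~\ref{lem_sobolev}, and then to integrate in the time variable, pulling the supremum out of the time integral.

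First I would fix $t\in(t_1,t_2)$, abbreviate $B=B(x_0,r)$, and use the algebraic splitting $|u|^{\kappa p}=|u|^{p}\,|u|^{(\kappa-1)p}$. Applying H\"older's inequality on $B$ with the conjugate exponents $\kappa^\ast$ and $\kappa^\ast/(\kappa^\ast-1)$ gives
$$\fint_B |u|^{\kappa p}\,d\mu \le \Big(\fint_B |u|^{\kappa^\ast p}\,d\mu\Big)^{1/\kappa^\ast}\Big(\fint_B |u|^{(\kappa-1)p\frac{\kappa^\ast}{\kappa^\ast-1}}\,d\mu\Big)^{\frac{\kappa^\ast-1}{\kappa^\ast}}.$$
For the first factor I invoke the second (zero boundary data) inequality in Lemma~\ref{lem_sobolev} with the exponent $\kappa^\ast$ in place of $\kappa$ — legitimate since $u(\cdot,t)\in W_{\X,0}^{1,p}(B)$ for a.e. $t$ by the definition of $L^p(t_1,t_2;W_{\X,0}^{1,p}(B))$. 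This requires checking that $\kappa^\ast$ is in the admissible range of that lemma: when $1\le p<N$ one has $\kappa^\ast=N/(N-p)$, which is precisely the borderline admissible value, and when $p\ge N$ one has $\kappa^\ast=2$, which is admissible since $\kappa$ is then allowed to be any finite number. In either case Lemma~\ref{lem_sobolev} yields
$$\Big(\fint_B |u|^{\kappa^\ast p}\,d\mu\Big)^{1/\kappa^\ast}\le C r^{p}\fint_B |\X u|^{p}\,d\mu,\qquad C=C(C_D,C_P,p).$$

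Combining the two displays gives, for a.e. $t\in(t_1,t_2)$,
$$\fint_B |u|^{\kappa p}\,d\mu \le C r^{p}\Big(\fint_B |\X u|^{p}\,d\mu\Big)\Big(\fint_B |u|^{p\frac{(\kappa-1)\kappa^\ast}{\kappa^\ast-1}}\,d\mu\Big)^{\frac{\kappa^\ast-1}{\kappa^\ast}}.$$
Integrating over $t\in(t_1,t_2)$ and bounding the last factor by its supremum over $(t_1,t_2)$ — using that $s\mapsto s^{(\kappa^\ast-1)/\kappa^\ast}$ is nondecreasing on $[0,\infty)$, so the supremum passes through the power — produces exactly the claimed estimate. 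The borderline case $\kappa=1$ is degenerate: the exponent $p(\kappa-1)\kappa^\ast/(\kappa^\ast-1)$ vanishes, the supremum factor equals $1$, and the statement collapses to Lemma~\ref{lem_sobolev} itself.

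I do not expect a genuine obstacle here; the only points requiring care are the exponent bookkeeping in the H\"older step and the verification that $\kappa^\ast p$ falls within the admissible Sobolev range in both regimes $p<N$ and $p\ge N$. The choice $\kappa^\ast=2$ for $p\ge N$ is made precisely so that this verification goes through while keeping the interpolation exponent $\kappa^\ast/(\kappa^\ast-1)$ finite and explicit.
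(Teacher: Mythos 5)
Your proposal is correct and follows essentially the same route as the paper: split $|u|^{\kappa p}=|u|^p|u|^{(\kappa-1)p}$, apply H\"older with exponents $\kappa^\ast$ and $\kappa^\ast/(\kappa^\ast-1)$ on the ball, bound the first factor via the zero-trace case of Lemma~\ref{lem_sobolev} with exponent $\kappa^\ast$, and integrate in time while pulling the second factor out as a supremum. Your explicit check that $\kappa^\ast$ lies in the admissible Sobolev range in both regimes $p<N$ and $p\ge N$ is exactly the point the paper leaves implicit, and the argument goes through.
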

\begin{proof}
	Using H\"older's inequality and Lemma \ref{lem_sobolev} we have
		\begin{align*}
			\int_{t_1}^{t_2} &\fint_{B(x_0,r)} |u|^{\kappa p} d\mu dt \leq \int_{t_1}^{t_2} \fint_{B(x_0,r)} |u|^{p} |u|^{(\kappa-1)p} d\mu dt\notag \\
			&\leq \int_{t_1}^{t_2}\left ( \fint_{B(x_0,r)} |u|^{\kappa^\ast p} d\mu dt \right )^{\frac{1}{\kappa^\ast}} \left ( \sup_{t_1 < t < t_2} \fint_{B(x_0,r)} |u|^{p \frac{(\kappa-1) \kappa^\ast}{\kappa^\ast-1}} d\mu \right )^{\frac{\kappa^\ast-1}{\kappa^\ast}} \\
			&\leq C r^p\left ( \int_{t_1}^{t_2} \fint_{B(x_0,r)} |\X u|^{p} d\mu dt\right ) \left ( \sup_{t_1 < t < t_2} \fint_{B(x_0,r)} |u|^{p \frac{(\kappa-1) \kappa^\ast}{\kappa^\ast-1}} d\mu \right )^{\frac{\kappa^\ast-1}{\kappa^\ast}}.
		\end{align*}
\end{proof}
\begin{remark} Note that if $1 \leq p < N$, then $(2\kappa^\ast - 1)/{\kappa^\ast} ={(N+p)}/{N}$, and if $p \geq N$, then $(2\kappa^\ast - 1)/{\kappa^\ast}=3/2$.
\end{remark}
\begin{lemma} \label{cor:3.1}
	Let $x_0,r,p,\kappa^\ast$ be as in Lemma \ref{lem_parabolic:sobolev}. Let $u \in L^p(t_1,t_2;W_{\X,0}^{1,p}(B(x_0,r)))$ and let
$\{|u| > 0\} \equiv \{(x,t) \in B(x_0,r) \times (t_1,t_2): |u(x,t)| > 0\}$. There exists a constant $C = C(C_D,C_P,p)\geq 1$ such that
	\begin{eqnarray*}
		\int_{t_1}^{t_2} \fint_{B(x_0,r)} |u|^{p} d\mu dt
		&\leq& C \left ( \frac{|\{|u| > 0\}|}{|B(x_0,r)|} \right )^{\frac{\kappa^\ast - 1}{2\kappa^\ast-1}} r^{p \frac{\kappa^\ast}{2\kappa^\ast-1}}\\
&& \times \left ( \int_{t_1}^{t_2} \fint_{B(x_0,r)} |\X u|^{p} d\mu dt + \sup_{t_1 < t < t_2} \fint_{B(x_0,r)} |u|^{p} d\mu\right ).
	\end{eqnarray*}
\end{lemma}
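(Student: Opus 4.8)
The plan is to interpolate between the two inequalities from Lemma \ref{lem_parabolic:sobolev} and Lemma \ref{lempoinmeas}, balancing the exponents so that the resulting integrand is again $|u|^p$. Concretely, I would start from the generic estimate
\begin{equation*}
	\int_{t_1}^{t_2} \fint_{B(x_0,r)} |u|^{\kappa p}\, d\mu\, dt \leq C r^p \Bigl( \int_{t_1}^{t_2} \fint_{B(x_0,r)} |\X u|^p\, d\mu\, dt \Bigr) \Bigl( \sup_{t_1<t<t_2} \fint_{B(x_0,r)} |u|^{p\frac{(\kappa-1)\kappa^\ast}{\kappa^\ast-1}}\, d\mu \Bigr)^{\frac{\kappa^\ast-1}{\kappa^\ast}},
\end{equation*}
and choose $\kappa$ so that $\frac{(\kappa-1)\kappa^\ast}{\kappa^\ast-1}=1$, i.e.\ $\kappa = \kappa^\ast/(2\kappa^\ast - 1)$ wait — solving $(\kappa-1)\kappa^\ast = \kappa^\ast - 1$ gives $\kappa = 2 - 1/\kappa^\ast = (2\kappa^\ast-1)/\kappa^\ast$. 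With this $\kappa>1$, the sup-term becomes $\bigl(\sup_t \fint |u|^p\bigr)^{(\kappa^\ast-1)/\kappa^\ast}$, and the left side is $\int\fint |u|^{\kappa p}$. Then I would relate $\int\fint|u|^p$ back to $\int\fint|u|^{\kappa p}$ via Hölder in the $x$-variable on the set $\{|u|>0\}$ at each fixed time, namely $\fint_{B}|u|^p\,d\mu \le \bigl(\fint_B |u|^{\kappa p}\,d\mu\bigr)^{1/\kappa}\bigl(|\{|u(\cdot,t)|>0\}|/|B|\bigr)^{1-1/\kappa}$, and integrate in time (using Hölder again in $t$ with the measure $|\{|u|>0\}|$ written as a double integral). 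Tracking the exponent $1-1/\kappa = (\kappa^\ast-1)/(2\kappa^\ast-1)$ and the power of $r$, which will be $r^{p/\kappa} = r^{p\kappa^\ast/(2\kappa^\ast-1)}$, reproduces exactly the claimed bound once the sup-term is absorbed by Young's inequality (it appears to a power $<1$).

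The cleaner way to organize this, which I would actually write down, is: apply Hölder in space-time directly to $\int_{t_1}^{t_2}\fint_B |u|^p$. Split $|u|^p = |u|^p \cdot \mathbf 1_{\{|u|>0\}}$ and write, with $q = \kappa = (2\kappa^\ast-1)/\kappa^\ast$ and dual exponent $q' = (2\kappa^\ast-1)/(\kappa^\ast-1)$,
\begin{equation*}
	\int_{t_1}^{t_2}\fint_B |u|^p\, d\mu\, dt \leq \Bigl( \int_{t_1}^{t_2}\fint_B |u|^{\kappa p}\, d\mu\, dt \Bigr)^{1/\kappa} \Bigl( \frac{|\{|u|>0\}|}{|B(x_0,r)|} \Bigr)^{1-1/\kappa},
\end{equation*}
then substitute the Lemma \ref{lem_parabolic:sobolev} bound (with the special $\kappa$ above) for the first factor. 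This yields
\begin{equation*}
	\int_{t_1}^{t_2}\fint_B |u|^p \leq C r^{p/\kappa}\Bigl(\frac{|\{|u|>0\}|}{|B|}\Bigr)^{\frac{\kappa^\ast-1}{2\kappa^\ast-1}} \Bigl(\int\fint|\X u|^p\Bigr)^{1/\kappa}\Bigl(\sup_t\fint|u|^p\Bigr)^{\frac{\kappa^\ast-1}{\kappa^\ast}\cdot\frac1\kappa}.
\end{equation*}
Now $1/\kappa = \kappa^\ast/(2\kappa^\ast-1) < 1$, and $\frac{\kappa^\ast-1}{\kappa^\ast}\cdot\frac1\kappa = \frac{\kappa^\ast-1}{2\kappa^\ast-1} < 1$; these three factors involving $\int\fint|\X u|^p$ and $\sup_t\fint|u|^p$ carry exponents summing to $1/\kappa + \frac{\kappa^\ast-1}{2\kappa^\ast-1} = 1$, so by the weighted arithmetic–geometric mean inequality the product is bounded by a constant times $\int\fint|\X u|^p + \sup_t\fint|u|^p$, which is the desired form. (Strictly I need each individual exponent $\le 1$, which holds, and then AM–GM applies.)

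The only genuinely delicate point is bookkeeping: verifying that the power of $r$ is exactly $p\kappa^\ast/(2\kappa^\ast-1)$ and that the power of $|\{|u|>0\}|/|B|$ is exactly $(\kappa^\ast-1)/(2\kappa^\ast-1)$ — both follow mechanically from $1/\kappa = \kappa^\ast/(2\kappa^\ast-1)$ and $1-1/\kappa = (\kappa^\ast-1)/(2\kappa^\ast-1)$ — and that the exponents of the gradient and sup terms really add to one so that AM–GM is applicable with no leftover power of $r$ or of the level-set ratio. There is no substantive analytic obstacle here; the lemma is a formal consequence of Lemma \ref{lem_parabolic:sobolev} (itself a consequence of the Sobolev–Poincaré inequality Lemma \ref{lem_sobolev}) plus Hölder and Young. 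I would present it in three short displayed steps: the space-time Hölder split, the substitution of Lemma \ref{lem_parabolic:sobolev}, and the Young absorption.
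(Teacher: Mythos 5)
Your proposal is correct and follows essentially the same route as the paper: a space--time H\"older split over the set $\{|u|>0\}$ (with respect to the measure $d\mu\,dt/|B(x_0,r)|$), then Lemma \ref{lem_parabolic:sobolev} with the special choice $\kappa=(2\kappa^\ast-1)/\kappa^\ast$ so that the sup-term involves only $\fint|u|^p$, and finally Young's inequality, since the exponents $\kappa^\ast/(2\kappa^\ast-1)$ and $(\kappa^\ast-1)/(2\kappa^\ast-1)$ on the gradient and sup factors sum to one. The exponent bookkeeping you indicate matches the paper's displayed computation exactly.
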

\begin{proof} Firstly, using H\"older's inequality we see that
	\begin{eqnarray} \label{eqparsob2}
		\int_{t_1}^{t_2} \fint_{B(x_0,r)} |u|^{p} d\mu dt
		&\leq& C \left ( \frac{|\{|u| > 0\}|}{|B(x_0,r)|} \right )^{\frac{\kappa^\ast - 1}{2\kappa^\ast-1}}\notag\\
 &&\times \left ( \int_{t_1}^{t_2} \fint_{B(x_0,r)} |u|^{p\frac{2\kappa^\ast-1}{\kappa^\ast}} d\mu dt \right )^{\frac{\kappa^\ast}{2\kappa^\ast-1}}.
	\end{eqnarray}
	Secondly, using Lemma \ref{lem_parabolic:sobolev} we have that
	% \begin{align*}
	% 	\bigg ( \int_{t_1}^{t_2} &\fint_{B(x_0,r)} |u|^{p\frac{2\kappa^\ast-1}{\kappa^\ast}} d\mu dt \bigg )^{\frac{\kappa^\ast}{2\kappa^\ast-1}} \\
	% 	&\leq C r^{p \frac{\kappa^\ast}{2\kappa^\ast-1}} \left ( \int_{t_1}^{t_2} \fint_{B(x_0,r)} |\X u|^{p} d\mu dt \right )^{\frac{\kappa^\ast}{2\kappa^\ast-1}} \left ( \sup_{t_1 < t < t_2} \fint_{B(x_0,r)} |u|^{p} d\mu \right )^{\frac{\kappa^\ast-1}{2\kappa^\ast-1}}.
	% \end{align*}
	\begin{align} \label{eqparsob2+}
		\int_{t_1}^{t_2} &\fint_{B(x_0,r)} |u|^{p\frac{2\kappa^\ast-1}{\kappa^\ast}} d\mu dt \notag\\
		&\leq C r^{p } \int_{t_1}^{t_2} \fint_{B(x_0,r)} |\X u|^{p} d\mu dt \left ( \sup_{t_1 < t < t_2} \fint_{B(x_0,r)} |u|^{p} d\mu \right )^{\frac{\kappa^\ast-1}{\kappa^\ast}}.
	\end{align}
Finally, using \eqref{eqparsob2}, \eqref{eqparsob2+} and Young's inequality we can conclude that
	\begin{align*}
		\bigg ( \int_{t_1}^{t_2} &\fint_{B(x_0,r)} |u|^{p\frac{2\kappa^\ast-1}{\kappa^\ast}} d\mu dt \bigg )^{\frac{\kappa^\ast}{2\kappa^\ast-1}} \\
		&\leq C r^{p \frac{\kappa^\ast}{2\kappa^\ast-1}} \left ( \int_{t_1}^{t_2} \fint_{B(x_0,r)} |\X u|^{p} d\mu dt + \sup_{t_1 < t < t_2} \fint_{B(x_0,r)} |u|^{p} d\mu \right ),
	\end{align*}
and hence the proof is complete.
\end{proof}

\subsection{Parabolic De Giorgi Estimate}

\begin{lemma} \label{lem_degiorgi}
	Let $B(x_0,r) \subset \Omega$, $0 < r < R$, $1 \leq p<\infty$. Let $k$ and $l$ be any pair of real numbers such that $k < l$. There exists a constant $C=C(C_D,C_P,p) \geq 1$ such that
	\begin{align*}
		(l-k)|B(x_0,r) &\cap \{u > l\} |^{\frac{1}{\kappa p}}\notag \\
&\leq \frac{C r |B(x_0,r)|^{\frac{2}{\kappa p}}}{|B(x_0,r) \cap \{u < k\}|^{\frac{1}{\kappa p}} } \bigg ( \fint_{B(x_0,r)} \chi_{\{k < u < l\}}|\X u|^p d\mu \bigg )^{\frac{1}{p}},
	\end{align*}
whenever $u \in W_{\X}^{1,p} (B(x_0,r))$, and where $1 \leq \kappa \leq {N}/{(N-p)}$, if $1 \leq p < N$, and $1 \leq \kappa < \infty$, if $p \geq N$.
\end{lemma}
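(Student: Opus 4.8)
The plan is to obtain this as an immediate consequence of Lemma \ref{lempoinmeas}, applied to a suitable truncation of $u$. Set $B = B(x_0,r)$ and introduce
\begin{equation*}
	v = \min\{(u-k)_+,\, l-k\}.
\end{equation*}
Since $t\mapsto \min\{(t-k)_+, l-k\}$ is $1$-Lipschitz and piecewise affine, composition with it preserves membership in the horizontal Sobolev space, so $v\in W_{\X}^{1,p}(B)$, and the chain rule gives $\X v = \chi_{\{k<u<l\}}\,\X u$ $\mu$-a.e. By construction $v\equiv l-k$ on $\{u>l\}$ and $v\equiv 0$ on $\{u\le k\}$.

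If $|B\cap\{u<k\}|=0$ the right-hand side of the asserted inequality is $+\infty$ and there is nothing to prove, so assume $|B\cap\{u<k\}|>0$. Then the set $A=\{x\in B: v(x)=0\}$ satisfies $|A|\ge |B\cap\{u<k\}|>0$, and Lemma \ref{lempoinmeas} applies to $v$ with this $A$. On the left-hand side of that inequality one bounds
\begin{equation*}
	\left(\fint_{B}|v|^{\kappa p}\,d\mu\right)^{\frac{1}{\kappa p}}
	\ge (l-k)\left(\frac{|B\cap\{u>l\}|}{|B|}\right)^{\frac{1}{\kappa p}},
\end{equation*}
while on the right-hand side one uses $|\X v|^p=\chi_{\{k<u<l\}}|\X u|^p$ together with $|A|\ge |B\cap\{u<k\}|$ to replace $|B|/|A|$ by the larger quantity $|B|/|B\cap\{u<k\}|$. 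Combining these two observations with Lemma \ref{lempoinmeas} and then multiplying through by $|B|^{\frac{1}{\kappa p}}$ yields exactly
\begin{equation*}
	(l-k)\,|B\cap\{u>l\}|^{\frac{1}{\kappa p}}
	\le \frac{C\, r\, |B|^{\frac{2}{\kappa p}}}{|B\cap\{u<k\}|^{\frac{1}{\kappa p}}}
	\left(\fint_{B}\chi_{\{k<u<l\}}|\X u|^p\,d\mu\right)^{1/p},
\end{equation*}
with $C=C(C_D,C_P,p)$ the constant furnished by Lemma \ref{lempoinmeas}, and the admissible range of $\kappa$ is inherited from that lemma.

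There is no substantial obstacle here: the one point that is not a completely routine computation is the validity of the truncation and of the chain rule $\X v=\chi_{\{k<u<l\}}\X u$ for horizontal Sobolev functions in the present metric setting. This is classical — it follows from the locality of $\X u$ and the piecewise-linear nature of the truncation — and does not require $p$-admissibility of the structure; only Lemma \ref{lempoinmeas}, hence \eqref{eq_D} and \eqref{eq_P}, is used in the final step.
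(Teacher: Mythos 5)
Your proof is correct and follows essentially the same route as the paper: the paper simply cites the argument of \cite[Lemma 2.3]{KMMP} based on Lemma \ref{lem_sobolev}, and your truncation $v=\min\{(u-k)_+,l-k\}$ combined with Lemma \ref{lempoinmeas} (which the paper itself derives from Lemma \ref{lem_sobolev}) is exactly that standard De Giorgi argument made explicit, with the degenerate case $|B\cap\{u<k\}|=0$ and the chain-rule point handled appropriately.
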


\begin{proof} The lemma can be proved by arguing as in \cite[Lemma 2.3]{KMMP} using Lemma \ref{lem_sobolev}.
\end{proof}

\subsection{Local Clustering in $W_{\X}^{1,p}$}
The lemma below is a metric space version of the so called ``Clustering Lemma'' from the work of DiBenedetto, Gianazza and Vespri \cite{DGVclust} (see also the earlier instance by DiBenedetto and Vespri \cite[Proposition A.1]{DBV}). It states that if a $W^{1,p}$ function is strictly positive in a large portion of the domain then the set where the function is positive clusters around one point. Our proof is a slight variant of the proof in \cite[Lemma 2.5]{KMMP}. In our version we have explicit estimates on the constants involved, something which will be used in the proof of the hot alternative (Lemma \ref{hot}).

\begin{lemma} \label{lem_local_clust}
	Let $B(x_0,r) \subset \Omega$, $0 < r < R$, $1 \leq p<\infty$. Consider $u \in W_{\X}^{1,p}(B(x_0,r))$ and assume that
	\begin{equation} \label{eqassumgrad}
		\left ( \fint_{B(x_0,r)} |\X u|^p d \mu \right )^{1/p} \leq \hat \gamma r^{-1} \quad \text{and} \quad |[u>1]\cap B(x_0,r)| \geq \alpha |B(x_0,r)|,
	\end{equation}
	for some $\hat \gamma > 0$ and $\alpha \in (0,1)$. There exists a constant $C = C(C_D,C_P,p) \geq 1$ such that for every $\delta, \lambda \in (0,1)$, there exist $\hat \epsilon= \hat \epsilon(C_D,C_P,p,\alpha,\delta,\hat \gamma,\lambda) \in (0,1)$, such that for any $0 < \epsilon \leq \hat \epsilon$ there exists $y \in B(x_0,r)$ satisfying
	\begin{equation*}
		\big |[u > \lambda] \cap B(y,\epsilon r)\big | > (1-\delta)
%		|B(x_0,\epsilon r)|.
				|B(y,\epsilon r)|.
	\end{equation*}
	In particular,
	\begin{equation*}
		\hat \epsilon = \frac{1}{C} \min \left (\frac{\delta \alpha^{\frac{\kappa^\ast + 1}{\kappa^\ast p}}(1-\lambda)}{\hat \gamma}\, ,\, \alpha^{1/{\hat \delta}} \right ),
	\end{equation*}
%	\begin{equation*}
%		\hat \epsilon = \frac{1}{C} \min \left (\frac{\alpha^{\frac{\kappa^\ast + 1}{\kappa^\ast p}}(1-\lambda)}{(1-\delta)\hat \gamma}\, ,\, \alpha^{1/{\hat \delta}} \right ),
%	\end{equation*}
	with $\kappa^\ast$ as in Lemma \ref{lem_parabolic:sobolev} and $\hat \delta$ as in Lemma \ref{lem_2.1}(3).
\end{lemma}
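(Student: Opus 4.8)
The plan is to argue by contradiction, very much in the spirit of the clustering lemma in \cite{DGVclust} and \cite[Lemma 2.5]{KMMP}, but keeping careful track of how the threshold $\hat\epsilon$ depends on the data. Suppose the conclusion fails for some $0<\epsilon\le\hat\epsilon$; that is, for \emph{every} $y\in B(x_0,r)$ we have $|[u>\lambda]\cap B(y,\epsilon r)|\le(1-\delta)|B(y,\epsilon r)|$, equivalently $|[u\le\lambda]\cap B(y,\epsilon r)|\ge\delta|B(y,\epsilon r)|$. The idea is to use this ``everywhere a definite portion is small'' information together with the Poincar\'e/Sobolev machinery of Lemma \ref{lempoinmeas} (applied on each small ball $B(y,\epsilon r)$ to the function $(u-\lambda)_+$, whose zero set on $B(y,\epsilon r)$ has relative measure $\ge\delta$) to control $\fint_{B(y,\epsilon r)}((u-\lambda)_+)$ in terms of $\epsilon r$ times the local $L^p$ norm of $\X u$. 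Then one covers $B(x_0,r)$ by a bounded-overlap family of such balls $B(y_i,\epsilon r)$ (using doubling, Lemma \ref{lem_2.1}(1), and a Vitali argument), sums these local estimates, and uses the global gradient bound $\fint_{B(x_0,r)}|\X u|^p\le\hat\gamma^p r^{-p}$ from \eqref{eqassumgrad} to obtain $\fint_{B(x_0,r)}(u-\lambda)_+\,d\mu\lesssim \epsilon\hat\gamma$. On the other hand, the second hypothesis in \eqref{eqassumgrad} forces $\fint_{B(x_0,r)}(u-\lambda)_+\,d\mu\gtrsim (1-\lambda)\,\alpha$ (a definite fraction of the ball has $u>1$, hence $(u-\lambda)_+>1-\lambda$ there, modulo the doubling factor relating $|B(x_0,r)|$ to $|B(y_i,\epsilon r)|$). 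Choosing $\hat\epsilon$ so that the upper bound is strictly smaller than the lower bound yields the contradiction, and unwinding the constants gives precisely $\hat\epsilon=\tfrac1C\min\!\big(\delta\alpha^{(\kappa^\ast+1)/(\kappa^\ast p)}(1-\lambda)\hat\gamma^{-1},\,\alpha^{1/\hat\delta}\big)$.

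First I would set $v=(u-\lambda)_+\in W^{1,p}_{\X}(B(x_0,r))$, note $|\X v|\le|\X u|$ and $[v>0]\supset[u>\lambda]\supset[u>1]$, so the two hypotheses of \eqref{eqassumgrad} transfer to $v$. Then, for a maximal $\epsilon r$-separated set $\{y_i\}\subset B(x_0,r)$, the balls $B(y_i,\epsilon r)$ cover $B(x_0,r)$ and $\{B(y_i,2\epsilon r)\}$ has bounded overlap depending only on $C_D$; on each $B(y_i,\epsilon r)$, since $|[v=0]\cap B(y_i,\epsilon r)|\ge\delta|B(y_i,\epsilon r)|$, Lemma \ref{lempoinmeas} with $\kappa=1$ gives $\fint_{B(y_i,\epsilon r)}v\,d\mu\le C\,\epsilon r\,\delta^{-1/p}\big(\fint_{B(y_i,2\epsilon r)}|\X v|^p\,d\mu\big)^{1/p}$ (here the factor $\delta^{-1/p}$, and more precisely a power of $|B|/|A|$, is where the $\alpha^{(\kappa^\ast+1)/(\kappa^\ast p)}$ will ultimately enter when I am careful about which $\kappa$ to use). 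Multiplying by $|B(y_i,\epsilon r)|$, summing over $i$, using bounded overlap and then H\"older in $i$ to pass from $\big(\fint|\X v|^p\big)^{1/p}$ summed to $\big(\sum\int|\X v|^p\big)^{1/p}$ (paying a power of the number of balls $\approx\epsilon^{-N}$, controlled by doubling), I arrive at an estimate of the form $\int_{B(x_0,r)}v\,d\mu\le C\epsilon r\,\big(\epsilon^{-N}\big)^{?}\,|B(x_0,r)|^{?}\big(\int_{B(x_0,r)}|\X v|^p\big)^{1/p}$; the exponents have to be arranged so the $\epsilon$-powers combine to a positive power of $\epsilon$ and the $|B(x_0,r)|$ powers match, which is exactly the bookkeeping the statement's explicit $\hat\epsilon$ encodes.

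For the lower bound, on the set $[u>1]$ we have $v=u-\lambda>1-\lambda$, so $\int_{B(x_0,r)}v\,d\mu\ge(1-\lambda)\,|[u>1]\cap B(x_0,r)|\ge(1-\lambda)\alpha|B(x_0,r)|$. Combining with the upper bound and the gradient hypothesis $\big(\fint_{B(x_0,r)}|\X v|^p\big)^{1/p}\le\hat\gamma r^{-1}$ gives $(1-\lambda)\alpha|B(x_0,r)|\le C\epsilon\,\hat\gamma\,|B(x_0,r)|\cdot(\text{powers of }\epsilon,\alpha)$, i.e. a bound of the shape $1\le C\epsilon\hat\gamma/((1-\lambda)\alpha^{a})$ for the appropriate exponent $a$; this is violated once $\epsilon<\hat\epsilon$ with $\hat\epsilon$ as claimed, provided also $\epsilon r$ is small enough that the annular decay / doubling estimates apply on scale $\epsilon r$ inside $B(x_0,r)$ — which is the source of the second alternative $\alpha^{1/\hat\delta}$ in the minimum (it ensures $B(y_i,2\epsilon r)\subset B(x_0,r')$ with a ball of comparable measure, using Lemma \ref{lem_2.1}(1) and (3)).

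I expect the main obstacle to be purely the constant-tracking: getting the exponent of $\alpha$ to come out as $(\kappa^\ast+1)/(\kappa^\ast p)$ rather than, say, $1/p$, requires applying Lemma \ref{lempoinmeas} with the \emph{sharp} Sobolev exponent $\kappa=\kappa^\ast$ (or its dual) instead of $\kappa=1$, and then interpolating back down; one must also be disciplined about whether to cover by balls of radius $\epsilon r$ or $\epsilon r/2$ and about the bounded-overlap constant, since all of these feed into $C$. The structural/PDE content is light here — $u$ is only a $W^{1,p}_{\X}$ function, not a (super)solution — so the real work is the metric-measure covering argument and the Sobolev--Poincar\'e estimate on small balls, both already available from Lemma \ref{lem_2.1} and Lemma \ref{lempoinmeas}.
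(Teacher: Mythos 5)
Your contradiction scheme (assume every ball $B(y,\epsilon r)$ has $|[u\le\lambda]\cap B(y,\epsilon r)|\ge\delta|B(y,\epsilon r)|$, apply Lemma \ref{lempoinmeas} to $(u-\lambda)_+$ on each ball of a bounded-overlap cover, sum, and compare with the mass lower bound $\int_{B(x_0,r)}(u-\lambda)_+\,d\mu\ge(1-\lambda)\alpha|B(x_0,r)|$) does prove a clustering statement of the right qualitative type, and your bookkeeping worries are actually unfounded: applying H\"older in $i$ as $\sum_i|B_i|^{1-1/p}\bigl(\int_{B_i}|\X v|^p\bigr)^{1/p}\le\bigl(\sum_i|B_i|\bigr)^{1-1/p}\bigl(\sum_i\int_{B_i}|\X v|^p\bigr)^{1/p}$ together with bounded overlap produces no $\epsilon^{-N}$ penalty, and the annulus/containment issue is handled exactly as you suspect (discard the annulus via Lemma \ref{lem_2.1}(3), which is indeed the source of the alternative $\alpha^{1/\hat\delta}$). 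This differs from the paper's proof, which is direct: it covers $B(x_0,(1-\epsilon)r)$ by a Vitali family, selects the subfamily $B^+$ of balls whose triples carry a fixed fraction $\sim\alpha$ of $[u>1]$, applies Lemma \ref{lempoinmeas} to $(1-u)_+$ on those balls (producing $\alpha^{1/(\kappa^\ast p)}$), and then picks one good ball by pigeonhole over the disjoint family using \eqref{eqassumgrad} (producing the extra $\alpha^{1/p}$).

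The genuine gap is quantitative, and it matters because the explicit $\hat\epsilon$ is part of the statement (and is what Lemma \ref{hot} consumes). In your argument $\delta$ enters through Lemma \ref{lempoinmeas} (as $\delta^{-1/(\kappa p)}$, since under the contradiction hypothesis the zero set of $(u-\lambda)_+$ has relative measure $\ge\delta$), while $\alpha$ enters only \emph{linearly} through the mass lower bound; unwinding your constants gives a threshold of the form $\hat\epsilon'\simeq\frac1C\min\bigl(\delta^{1/(\kappa^\ast p)}\alpha(1-\lambda)/\hat\gamma,\ \alpha^{1/\hat\delta}\bigr)$, not the stated $\frac1C\min\bigl(\delta\,\alpha^{(\kappa^\ast+1)/(\kappa^\ast p)}(1-\lambda)/\hat\gamma,\ \alpha^{1/\hat\delta}\bigr)$. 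No choice of Sobolev exponent in Lemma \ref{lempoinmeas} repairs this, because the $(|B|/|A|)$ factor there involves $\delta$, not $\alpha$, in your setup; the sublinear power $\alpha^{(\kappa^\ast+1)/(\kappa^\ast p)}=\alpha^{1/p+1/(\kappa^\ast p)}$ can only come from the paper's mechanism (Poincar\'e on balls where $[u>1]$ has an $\alpha$-fraction, plus the pigeonhole $D_j^p\le C\hat\gamma^p/(\alpha r^p)$ over the disjoint family). Since for $p\ge2$ and small $\alpha$ the stated $\hat\epsilon$ is strictly larger than your $\hat\epsilon'$, and the conclusion is not monotone in $\epsilon$, proving it only for $\epsilon\le\hat\epsilon'$ does not yield the lemma as stated; your closing claim that the constants ``come out precisely'' as in the statement is not supported by the argument you sketch. (If you want to keep the contradiction format, you can still get the stated exponent by negating the conclusion only on the $B^+$ subfamily: then $(1-\lambda)\delta|B_j|\le\int_{B_j}(1-u)_+\,d\mu\le C\epsilon r\,\alpha^{-1/(\kappa^\ast p)}D_j|B_j|$ forces $D_j\gtrsim\delta(1-\lambda)\alpha^{1/(\kappa^\ast p)}/(\epsilon r)$ for every $B_j\in B^+$, which contradicts \eqref{eqassumgrad} and $\sum_{B^+}|B_j|\gtrsim\alpha|B|$ once $\epsilon$ is below the stated $\hat\epsilon$ --- but that is the paper's argument in contrapositive form, and it requires the $B^+$ selection and disjointness that your proposal omits.)
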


\begin{proof} Let $\epsilon_0>0$ be sufficiently small so that Lemma \ref{lem_2.1} yields
	\begin{equation*}
		|B(x_0,r)\setminus B(x_0,(1-\epsilon_0)r)| \leq \alpha/4 |B(x_0,r)|.
	\end{equation*}
For every $0<\epsilon<\epsilon_0$ consider a Vitali-Wiener type covering of $B(x_0,(1-\epsilon) r)$, see \cite{CW} and \cite[Theorem 3.2]{CW1}, to find a family of balls $\{B_i = B(w_i,r_i) \}$, $w_i \in B(x_0,r)$, $r_i \approx d(w_i,\partial B(x_0,r))$, such that
	\begin{enumerate}
		\item the balls $B_i\subset B(x_0,(1-\epsilon) r)$ are pairwise disjoint,
		\item $B(x_0,(1-\epsilon) r) \subset \cup_i 3B_i\subset B=B(x_0,r)$,
		%\item a point in $B(x_0,r)$ is contained in at most $c_1$ balls $B_i$.
	\end{enumerate}
	and such that $C \epsilon r \geq r_i\ge \epsilon r$.
%	In this covering, we are free to choose our radii as long as we are at some quantifiable distance from the boundary. We will choose our $r_i=\epsilon r$ for a sub collection of $\{B_j\} \subset \{B_i\}$ for which $\{3B_j\}$ is a covering of $B(x_0,(1-\epsilon)r)$, if $\epsilon<\epsilon_0$, where $\epsilon_0$ is chosen small enough by Lemma \ref{lem_2.1}, then $|B(x_0,r)\setminus B(x_0,(1-\epsilon_0)r)| \leq \alpha/4 |B(x_0,r)|$.
	Following \cite{DGVclust} we consider the sub-collection $B^+ = \{ B_j: |[u>1] \cap 3B_j| > \frac{\alpha}{k} |B_j|\}$ and denote by $B^-$ the rest of the balls. Here $k$ is a degree of freedom to be chosen. Since
\begin{equation*}
\sum_{B_j \in B^+} |3B_j| + \frac{\alpha}{k} \sum_{B_j \in B^-} |3B_j| \ge	\sum_{B_j \in B^+} |[u>1] \cap 3B_j| + \sum_{B_j \in B^-} |[u>1] \cap 3B_j| > \alpha/2 |B|,
\end{equation*}
then for $k$ sufficiently large depending only on the doubling constant $C_D$ one has
%\begin{equation*}
%	\sum_{B_j \in B^+} |3B_j| + \frac{\alpha}{k} \sum_{B_j \in B^-} |3B_j| > \alpha/2 |B|.
%\end{equation*}
%Giving
%\begin{equation} \label{eqb+bdd1}
%	\sum_{B_j \in B^+} |3B_j| > \left ( \frac{\alpha}{2} - \frac{C \alpha}{k} \right ) |B|.
%\end{equation}
%We can now choose $k$ such that \eqref{eqb+bdd1} becomes
\begin{equation} \label{eqb+bdd}
	\sum_{B_j \in B^+} |3B_j| > \frac{\alpha}{4} |B|.
\end{equation}
Next, for every $B_j \in B^+$, setting
\begin{equation*}% \label{eqgradassum}
	\left ( \fint_{B_j} |\X u|^p d\mu \right )^{1/p} \equiv D_j,
\end{equation*}
one can easily see that from Lemma \ref{lempoinmeas} and H\"older's inequality there exists a constant $C_2 = C_2(C_D,C_P,p) \geq 1$ such that
\begin{equation} \label{eq1-u+int}
	\fint_{B_j} (1-u)_+ d\mu \leq
	\frac{C_2 \epsilon r}{\alpha^{\frac{1}{\kappa^\ast p}}} \left ( \fint_{B_j} |\X u|^p d\mu	\right )^{1/p}
	=\frac{C_2 \epsilon r}{\alpha^{\frac{1}{\kappa^\ast p}}} D_j,
\end{equation}
Moreover
\begin{equation} \label{eq1-u+meas}
	(1-\lambda)\big | \big \{ u \leq \lambda \big \} \cap B_j \big| \leq \int_{B_j} (1-u)_+ d\mu.
\end{equation}
Combining \eqref{eq1-u+int} and \eqref{eq1-u+meas} yields
\begin{equation} \label{eqlambda}
	\big |\big \{u>\lambda \big \} \cap B_j \big| > \left ( 1 - \frac{C_2 \epsilon r}{(1-\lambda)\alpha^{\frac{1}{\kappa^\ast p}}} D_j \right )|B_j|.
\end{equation}
Next we show that for at least one $B_j\in B^+$ and for a constant $C_3 \geq 1$ depending only on $C_D$, one has
\begin{equation}\label{stima}
	D_j^p \le \frac{4 C_3\hat{\gamma}^p}{\alpha}r^{-p}.
\end{equation}
Inequality \eqref{stima}, together with \eqref{eqlambda} concludes the proof. To prove \eqref{stima} we note that by \eqref{eqb+bdd} and \eqref{eq_D} one has $\sum_{B_j\in B^+} |B_j|/|B|> \alpha/(4 C_3)$ for some $C_3=C_3(C_D) \geq 1$. On the other hand, from \eqref{eqassumgrad} it follows that
\begin{equation*}
	\sum_{B_j\in B^+} \frac{4 C_3}{\alpha} \frac{|B_j|}{|B|} D_j^p
	\le\frac{4 C_3}{\alpha} \left ( \fint_{B(x_0,r)} |\X u|^p d \mu \right )
	\le \frac{4 C_3}{\alpha} \hat\gamma^p r^{-p},
\end{equation*}
from which \eqref{stima} follows immediately. \end{proof}

%%%%%%%%%%%%%%
%Let us now note that it suffices to prove that there exists a ball $B_j \in B^+$ such that \eqref{eqgradassum} holds.
%To do this, assume that \eqref{eqgradassum} does not hold for any ball $B_j \in B^+$, then from property (2) of the covering, \eqref{eq_D} and \eqref{eqb+bdd}, we get
%\begin{eqnarray*}
%	\int_{B} |\X u|^p d\mu &\geq& \int_{\bigcup_{B_j \in B^+}3B_j} |\X u|^p d\mu \geq D^p \sum_{B^+} |B_j| \geq \frac{1}{C_1} D^p \sum_{B^+} |3 B_j| \notag \\
%	&\geq& \frac{1}{C_1} D^p \frac{\alpha}{4} |B|, %\label{eqgradcontr}
%\end{eqnarray*}
%for a constant $C_1=C_1(C_D) \geq 1$.
%If we choose
%\begin{eqnarray*}
%	D = C_1^{1/p}\frac{\hat \gamma}{r(\alpha/4)^{1/p}},
%\end{eqnarray*}
%we get a contradiction to the initial assumption \eqref{eqassumgrad}. Hence there exists a ball $B_j\in B^+$ such that \eqref{eqgradassum} holds. Taking $\epsilon$ to satisfy
%\begin{equation*}
%	\epsilon \leq \min \left ( \frac{\alpha^{\frac{1}{\kappa^\ast p}}(1-\lambda)}{C_2 r D(1-\delta)} , \epsilon_0 \right ),
%\end{equation*}
%we see that \eqref{eqlambda} concludes the lemma.
%\end{proof}
%

\section{Estimates for sub/super-solutions}
\noindent
Throughout the rest of the paper we will assume that $(\M,\mu,d)$ is a $p$-admissible structure for some given $p\ge 2$, in the sense of Definition \ref{admissible}. We will assume that $\Omega$ is a bounded open set in $\M$ and set $K=\bar\Omega$. The constants $C_D, C_P,$ and $R$ in Definition \ref{admissible} will all depend on $K$. Unless otherwise stated we let $C\geq 1$ denote a constant depending only on $\data$, not necessarily the same at each occurrence.

%In this section we assume that $(\M,\mu,d)$ be a $p$-admissible structure for some $p\ge 1$, in the sense of Definition \ref{admissible}. Throughout the section we will assume that $\Omega$ is a bounded open set in $\M$ and set $K=\bar\Omega$. The constants $C_D, C_P, N$ and $R$ in Definition \ref{admissible} will all depend on $K$. In this section unless otherwise stated we let $C\geq 1$ denote a constant depending only on $C_D,C_P,p$, not necessarily the same at each occurrence.
%
\subsection{Caccioppoli estimate} Let $\zeta_h(s)$ be a standard mollifier with support in $(-h,h)$. Given $f : \M \times \R \to \R$, we define
\begin{equation*}
	f_h(x,t) = \int_{\R} f(x,s) \zeta_h(t-s) ds.
\end{equation*}
\begin{definition}\label{lebesgue-instant}
	Let $\Omega \subset \M$ be a domain, $u \in L^p(t_1,t_2;W_{\X}^{1,p}(\Omega))$, and consider $t_1 < t < t_2$. Then $t$ is called a Lebesgue instant for $u$ if
	\begin{equation*}
		\lim_{h \to 0} \int_{\Omega} |u_h (x,t)-u(x,t)|^2 d\mu = 0.
	\end{equation*}
\end{definition}

The following two lemmas can be proved in a standard fashion by proceeding along the lines of \cite{K} or \cite{K1}, hence we omit further details for the sake of brevity.

\begin{lemma} \label{lem_cacc}
	Let $\xi \in \R \setminus \{-1,0\}$, $\delta > 0$, and assume that $\A$ satisfies the structure conditions \eqref{admissiblesym}. If $u \geq \delta$ is a sub-solution (if $\xi> 0$) or a super-solution (if $\xi< 0$) to \eqref{eq_theeq} in $\Omega \times (\tau_1, \tau_2)$, then for any Lebesgue instants $t_1,t_2$ for $u$, with $\tau_1<t_1<t_2<\tau_2$, one has
	\begin{align*}
		\int_{t_1}^{t_2} \int_{\Omega} &|\X u|^{p} u^{\xi - 1} \phi^p d\mu dt + \frac{p}{\A_0 \xi} \int_{\Omega} \frac{u(x,t)^{1+\xi}}{1+\xi} \phi^p(x,t) d\mu \bigg |_{t=t_1}^{t_2} \\
		\notag &\leq
		\frac{p}{\A_0}\int_{t_1}^{t_2} \int_{\Omega} u^{1+\xi} \left ( \frac{1}{\xi(1+\xi)} \frac{\partial \phi^p}{\partial t} \right )_{+} d\mu dt \\
		&+ \left ( \frac{\A_1 p}{\A_0 |\xi|} \right )^p \int_{t_1}^{t_2} \int_{\Omega}u^{p - 1 + \xi} |\X \phi|^p d\mu dt,
	\end{align*}
	for all $\phi(x,t) = \psi(x) \zeta(t)$ with $\zeta \in C_0^{\infty}(\tau_1,\tau_2)$ and $\psi \in W_{\X,0}^{1,\infty}(\Omega)$.
	\end{lemma}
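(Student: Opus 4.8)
The plan is to prove the Caccioppoli estimate in Lemma \ref{lem_cacc} by testing the weak formulation \eqref{eq_sol} against a carefully chosen test function built from a power of $u$ and the cutoff $\phi$. Specifically, since $u\ge\delta>0$, the quantity $u^\xi$ is bounded and Lipschitz in $u$ on $[\delta,\infty)$ (when $\xi$ is restricted away from $-1$ and $0$, though one only needs it away from $0$ for the test function to be admissible and away from $-1$ for the resulting identity to make sense), so the natural choice is $\eta = u^\xi \phi^p$. The first obstacle is that $u$ need not be differentiable in $t$, so $\eta$ is not directly an admissible test function; this is precisely why Lebesgue instants and the mollification $u_h$ are introduced. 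One therefore works with the mollified equation, tests against $\eta_h = (u_h)^\xi \phi^p$ or uses Steklov-type averages, integrates by parts in time on the mollified level, and then passes $h\to0$, using the definition of Lebesgue instant to control the boundary terms at $t=t_1,t_2$ and standard mollifier convergence for the interior terms.

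The key steps, in order, would be: (1) Fix Lebesgue instants $t_1<t_2$ and restrict $\zeta$ to be supported in $(\tau_1,\tau_2)$; regularize $u$ in time via $u_h$ and write down the mollified weak equation. (2) Use $\eta = u_h^\xi \phi^p$ (after a further time-truncation to respect the interval $(t_1,t_2)$, which introduces the $t=t_1,t_2$ boundary contributions) as a test function. (3) Handle the parabolic term: $\int u\,\partial_t(u_h^\xi\phi^p)$ produces, after letting $h\to0$, the term $\int \frac{u^{1+\xi}}{1+\xi}\partial_t(\phi^p)$ plus the boundary terms $\frac{1}{1+\xi}\int u^{1+\xi}\phi^p\big|_{t_1}^{t_2}$; here one needs $\xi\ne-1$ so that $\frac{u^{1+\xi}}{1+\xi}$ is the correct primitive of $u^\xi$ in $u$. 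The sign bookkeeping (why only $(\partial_t\phi^p)_+$ appears, and the factor $\frac{1}{\xi(1+\xi)}$) comes from combining the chain rule with the fact that we move the $1/\xi$ and $1/(1+\xi)$ factors around and then bound by the positive part. (4) Handle the elliptic term: $\int \A(x,t,u,\X u)\cdot\X(u^\xi\phi^p)$ expands via the product and chain rules into $\xi\int \A\cdot\X u\, u^{\xi-1}\phi^p + p\int \A\cdot\X\phi\, u^\xi \phi^{p-1}$. Apply the structure conditions \eqref{admissiblesym}: the first integrand is bounded below by $\A_0 \xi |\X u|^p u^{\xi-1}\phi^p$ (note $\xi$ may be negative, which is exactly why the two cases sub-/super-solution appear — the sign of $\xi$ dictates which inequality in the definition of weak sub/super-solution one is allowed to use so that the inequality points the right way), and the cross term is bounded in absolute value by $\A_1 |\X u|^{p-1} u^\xi \phi^{p-1}|\X\phi|$. (5) Absorb the cross term by Young's inequality in the form $ab\le \varepsilon a^{p/(p-1)} + C_\varepsilon b^p$ applied with $a \sim |\X u|^{p-1}u^{(\xi-1)(p-1)/p}\phi^{p-1}$ so that the $|\X u|^p u^{\xi-1}\phi^p$ piece is absorbed into the left side, leaving the remainder $\big(\tfrac{\A_1 p}{\A_0|\xi|}\big)^p \int u^{p-1+\xi}|\X\phi|^p$ on the right.

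The main obstacle I expect is the time-regularization bookkeeping: justifying that $\eta_h$ (or its time-truncated version) is a legitimate test function in $W_0^{1,2}(t_1,t_2;L^2)\cap L^p(t_1,t_2;W_{\X,0}^{1,p})$, that all the passages to the limit $h\to0$ are valid (using $u\in L^p(t_1,t_2;W^{1,p}_\X)$, boundedness $u\ge\delta$, and the Lebesgue-instant hypothesis to get the pointwise-in-time boundary terms), and keeping track of signs when $\xi<0$ so that the final inequality has the stated direction. This is exactly the kind of argument that is "standard" following \cite{K} or \cite{K1}, which is why the authors state the lemma with the remark that they omit the details; the substance is in (3)–(5) above, and everything else is careful but routine mollification analysis. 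I would also note that the hypothesis that $\phi$ factors as $\psi(x)\zeta(t)$ with $\zeta\in C_0^\infty(\tau_1,\tau_2)$ is what makes $\partial_t(\phi^p)$ well-behaved and compactly supported in time, so no boundary terms at $\tau_1,\tau_2$ appear and the truncation to $(t_1,t_2)$ is clean.
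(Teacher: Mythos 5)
Your proposal is correct and takes essentially the same route as the paper, which itself omits the details and refers to the standard argument in \cite{K} and \cite{K1}: namely, testing the (time-mollified) weak formulation with $\eta = u^{\xi}\phi^p$, using Lebesgue instants to recover the boundary terms at $t_1,t_2$, expanding the elliptic term, invoking the structure conditions \eqref{admissiblesym} with the sign of $\xi$ matched to the sub-/super-solution inequality, and absorbing the cross term by Young's inequality. No gaps.
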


\begin{lemma} \label{cor_cacc}
	Let $\xi \in \R \setminus \{-1,0\}$, $\delta > 0$, and assume that $\A$ satisfies the structure conditions \eqref{admissiblesym}. If $u \geq \delta$ is a sub-solution (if $\xi> 0$) or a super-solution (if $\xi< 0$) to \eqref{eq_theeq} in $\Omega \times (\tau_1, \tau_2)$, then one has
	\begin{align*}
		\int_{\tau_1}^{\tau_2} \int_\Omega &|\X u|^p u^{\xi- 1} \phi^p d\mu dt + \frac{p}{\A_0 |\xi(1+\xi)|} \sup_{\tau_1 < t < \tau_2} \int_\Omega u^{1+\xi} \phi^p d\mu\\
		&\leq 2\left ( \frac{ \A_1 p}{\A_0 |\xi|} \right )^p \int_{\tau_1}^{\tau_2} \int_\Omega u^{p+\xi- 1} |\X \phi|^p d\mu dt \\
		&+ \frac{2p}{\A_0}\int_{\tau_1}^{\tau_2} \int_\Omega u^{1+\xi} \left ( \frac{1}{\xi(1+\xi)} \frac{\partial \phi^p}{\partial t} \right )_{+} d\mu dt,
	\end{align*}
	for all $\phi\in W_0^{1,\infty}(\tau_1,\tau_2; L^\infty(\Omega)) \cap L^\infty (\tau_1,\tau_2; W_{\X,0}^{1,\infty}(\Omega))$.\end{lemma}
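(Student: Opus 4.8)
\textbf{Proof proposal for Lemma \ref{cor_cacc}.} The plan is to derive this estimate from the energy inequality in Lemma \ref{lem_cacc} by an absorption argument on the time-supremum term, choosing the cutoff to be separated in space and time only in an intermediate step. First, fix $\tau_1 < \tau_2$ and let $\phi\in W_0^{1,\infty}(\tau_1,\tau_2; L^\infty(\Omega)) \cap L^\infty (\tau_1,\tau_2; W_{\X,0}^{1,\infty}(\Omega))$. By a standard density and mollification argument (as in \cite{K}, \cite{K1}) it suffices to treat $\phi$ of the product form $\phi(x,t)=\psi(x)\zeta(t)$ and then handle the general case by a partition-of-unity/telescoping device in time; alternatively one applies Lemma \ref{lem_cacc} directly with the general $\phi$ after checking its proof goes through for such $\phi$. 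The point is that the set of Lebesgue instants has full measure in $(\tau_1,\tau_2)$, so we may pick Lebesgue instants $t_1$ close to $\tau_1$ (where $\phi(\cdot,t_1)\equiv 0$ since $\phi$ has compact support in time) and $t_2=t^\ast$ an arbitrary Lebesgue instant, and let $t_1\downarrow \tau_1$.

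Second, with $t_1$ chosen so that the boundary term at $t=t_1$ vanishes, Lemma \ref{lem_cacc} gives, for every Lebesgue instant $t^\ast\in(\tau_1,\tau_2)$,
\begin{equation*}
	\int_{\tau_1}^{t^\ast} \int_{\Omega} |\X u|^{p} u^{\xi - 1} \phi^p \, d\mu\, dt + \frac{p}{\A_0 \xi(1+\xi)} \int_{\Omega} u(x,t^\ast)^{1+\xi} \phi^p(x,t^\ast)\, d\mu \leq R(\phi),
\end{equation*}
where $R(\phi)$ denotes the right-hand side of the inequality in Lemma \ref{lem_cacc}, which is monotone in the upper limit and hence bounded by the same expression integrated over all of $(\tau_1,\tau_2)$. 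Note the sign: since $u\ge\delta>0$ and we are in the case $\xi(1+\xi)$ of a definite sign matching the sub/super-solution dichotomy, the coefficient $p/(\A_0\xi(1+\xi))$ is positive after taking absolute values, so both terms on the left are non-negative. Taking the supremum over Lebesgue instants $t^\ast$ (and using that $t\mapsto\int_\Omega u^{1+\xi}\phi^p\,d\mu$ agrees a.e.\ with its values at Lebesgue instants, or arguing by lower semicontinuity) converts the pointwise-in-time bound into the $\sup_{\tau_1<t<\tau_2}$ term; combined with the already-established bound on the gradient integral over $(\tau_1,\tau_2)$, this yields the claimed inequality with the factor $2$ absorbing the fact that we bound two separate quantities by the same $R(\phi)$. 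Finally, rewriting $R(\phi)$ using $|\xi(1+\xi)|$ and collecting the constants $(\A_1 p/(\A_0|\xi|))^p$ and $2p/\A_0$ gives exactly the stated form.

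The main obstacle is the bookkeeping around the time cutoff and the passage from product-form cutoffs $\psi(x)\zeta(t)$ (as required by Lemma \ref{lem_cacc}) to general $\phi\in W_0^{1,\infty}(\tau_1,\tau_2; L^\infty(\Omega)) \cap L^\infty (\tau_1,\tau_2; W_{\X,0}^{1,\infty}(\Omega))$: one must verify that the derivation of the Caccioppoli inequality — which uses $u^{\xi}\phi^p$ (suitably truncated/mollified in $t$) as a test function in \eqref{eq_sol} — still produces the same structure when $\phi$ is not separated, in particular that the term involving $\partial_t\phi^p$ is the only one picking up the time derivative and that the spatial cutoff estimate $|\X\phi|$ is used only through $|\X u|^{p-1}|\X\phi|$ and Young's inequality. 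Since the authors explicitly say this and the previous lemma "can be proved in a standard fashion by proceeding along the lines of \cite{K} or \cite{K1}," the expectation is that this is routine; the only genuine care is ensuring the boundary-term sign works out (hence the hypothesis $u\ge\delta$ and the matching of $\sgn\xi$ with sub- versus super-solution) and that the supremum is taken over a full-measure set of Lebesgue instants so no null-set pathology intervenes.
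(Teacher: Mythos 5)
There is a genuine gap in the sign bookkeeping, and it matters for how the lemma is used later in the paper. Your argument kills the cutoff at the initial endpoint ($\phi(\cdot,t_1)\equiv 0$) and keeps the boundary term at $t_2=t^\ast$ on the left, asserting that ``$\xi(1+\xi)$ has a definite sign matching the sub/super-solution dichotomy.'' That is false: for super-solutions the lemma allows any $\xi<0$, $\xi\neq -1$, and when $-1<\xi<0$ one has $\xi(1+\xi)<0$, so in Lemma \ref{lem_cacc} the coefficient $p/(\A_0\xi(1+\xi))$ of the term at $t_2$ is \emph{negative}; with your choice of endpoints the boundary term at $t^\ast$ enters the left-hand side with the wrong sign and can neither be dropped nor converted into the supremum term of Lemma \ref{cor_cacc}. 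This range of $\xi$ is not exotic here: the paper applies Lemma \ref{cor_cacc} and Lemma \ref{lem_cacc} precisely with $\xi\in(-1,0)$ in the reverse H\"older estimates (Lemma \ref{lem_5.3} with $\xi=\alpha_j-p+1$, and Lemmas \ref{lem_5.5}, \ref{lem_5.7} with $\xi=-1+\delta$, resp.\ $\xi=-\tfrac{\kappa^\ast-1}{2\kappa^\ast(p-1)}$). The fix is easy but must be stated: when $\xi(1+\xi)<0$, use instead $t_2$ near $\tau_2$ (where $\phi$ also vanishes, by the $W_0^{1,\infty}$-in-time hypothesis) and let $t_1=t^\ast$ vary; then the boundary term appears as $-\tfrac{p}{\A_0\xi(1+\xi)}\int_\Omega u(t^\ast)^{1+\xi}\phi^p(t^\ast)\,d\mu=\tfrac{p}{\A_0|\xi(1+\xi)|}\int_\Omega u(t^\ast)^{1+\xi}\phi^p(t^\ast)\,d\mu\geq 0$, and your absorption with the factor $2$ (monotonicity of the right-hand side in the time interval, sup over Lebesgue instants, sum of the two bounds) goes through verbatim.

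A secondary point: the ``partition-of-unity/telescoping device in time'' cannot upgrade the product-form cutoffs $\psi(x)\zeta(t)$ of Lemma \ref{lem_cacc} to general $\phi$, because the inequality is nonlinear in the cutoff (through $\phi^p$, $|\X\phi|^p$ and $(\partial_t\phi^p)_+$), so it does not superpose over a decomposition of $\phi$. The workable alternative you also mention is the right one, and it is in fact what the paper intends: the paper gives no proof of Lemma \ref{cor_cacc} beyond the citation to \cite{K}, \cite{K1}, where the estimate is obtained by testing \eqref{eq_sol} directly with (a mollified, truncated version of) $u^{\xi}\phi^p$ for a general admissible $\phi$, and then evaluating the resulting time term either at a varying Lebesgue instant or at the endpoint where $\phi$ vanishes, exactly as in the corrected endpoint choice above. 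So your route ``Lemma \ref{lem_cacc} $\Rightarrow$ Lemma \ref{cor_cacc}'' is acceptable once the sign-dependent choice of endpoint is made explicit and the general-$\phi$ case is handled by rerunning the testing argument rather than by decomposition.
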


\begin{remark} \label{lem3.2rem} Note that in Lemma \ref{lem_cacc} and Lemma \ref{cor_cacc} the constant $\delta$ is used only qualitatively. Furthermore, if $u$ is a non-negative super-solution to \eqref{eq_sol} w.r.t. the symbol $\A$, then $(u-k)_-$ is a bounded sub-solution of an equation w.r.t. to a symbol $\tilde \A$ which is structurally similar to $\A$, and if $\xi \geq 1$ one can apply apply both Lemma \ref{lem_cacc} and Lemma \ref{cor_cacc} with $u$ replaced by $(u-k)_-$ and with $\delta = 0$. In addition, we observe that
%since $(u-k)_-$ is bounded and $\xi \geq 1$, it is then easy to see that one can relax the smoothness assumption of the test function and consider $\phi \in W_0^{1,p}(\tau_1,\tau_2; L^p(\Omega)) \cap L^p(\tau_1,\tau_2;W_{\X,0}^{1,p}(\Omega))$. In fact,
if $\psi \in W_{\X,0}^{1,\infty}(\Omega)$, and we set $\phi = \zeta(t) \psi$, where $\zeta \in C_0^\infty(\tau_1,\tau_2)$ and $\zeta = 1$ in $(t_1,t_2)$, then we can use $\psi$ directly, in place of $\phi$, in Lemma \ref{lem_cacc}.
\end{remark}

\subsection{Expansion of positivity for super-solutions}

The main technical tool used in the proof of the Harnack inequality is the following expansion of positivity lemma.

\begin{lemma}\label{lem_expofpos}
	Let $\A$ satisfy the structure conditions \eqref{admissiblesym}. Let $Q \equiv B(x_0,4r_0) \times (t_0, t_0 + T_0)$, $B(x_0,4r_0) \Subset \Omega$, $0 < 4r_0 < R$, let $u$ be a non-negative weak super-solution to \eqref{eq_theeq} in an open set containing $\overline{Q}$. Suppose that $t_0$ is a Lebesgue instant for $u$ and that
	\begin{equation*}
		\big |\big \{x \in B(x_0,r): u(x,t_0) > M \big \}\big | \geq \delta |B(x_0,r)|,
	\end{equation*}
	for some $0 < r < r_0$, $M > 0$ and $0 < \delta < 1$. There exist constants $\gamma = \gamma (\data,\allowbreak\delta)\geq 1$ and
	$\theta = \theta (\data,\allowbreak\delta) \geq 1$ such that
	\begin{equation*}
		\inf_{Q\cap Q'} u \geq M \left ( \frac{r}{r_0} \right )^{\theta},
	\end{equation*}
	where $Q' = B(x_0,2r_0) \times (t_0+\tilde T/2,t_0+\tilde T)$ and $\tilde T = \gamma \left ( M \left ( \frac{r}{r_0} \right )^{\theta} \right )^{2-p} r_0^p.$
\end{lemma}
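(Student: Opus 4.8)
The plan is to chain together the three intermediate results sketched in the introduction (Lemma~\ref{lem_positivity}, Lemma~\ref{lem_combo}, and a De~Giorgi iteration), following the strategy outlined after the statement of Theorem~\ref{theorem1.2a}. First I would use the Caccioppoli inequality of Lemma~\ref{lem_cacc}, applied to $(u - M)_-$ (which, by Remark~\ref{lem3.2rem}, is a bounded sub-solution of a structurally similar equation), together with the $\hat\delta$-annular decay property of Lemma~\ref{lem_2.1}(3) and the initial measure hypothesis, to propagate the lower bound forward a short time: there is a constant $C$ so that the measure estimate $|\{x \in B(x_0,r) : u(x,t) > \delta M/8\}| \ge (\delta/8)|B(x_0,r)|$ holds for all Lebesgue instants $t$ with $t_0 < t < t_0 + \delta^{p/\hat\delta + 1} M^{2-p} r^p / C$. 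This is the content of Lemma~\ref{lem_positivity}; it is a Euclidean-style argument, but one must be careful that no spatial rescaling is used, only the doubling/annular-decay structure.

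Next, I would perform the time-change $t \mapsto \Lambda(t) = \tau$ designed to exactly cancel the intrinsic decay of the $p$-parabolic equation (for $p > 2$ the natural scaling couples space and time, so $\Lambda$ is essentially a logarithmic-type substitution) and check that $v(x,\tau) := u(x,\Lambda^{-1}(\tau)) / (\text{normalizing factor})$ remains a non-negative weak super-solution of an equation with the same structural constants. Combining the measure lower bound \eqref{eq1uu} with this change of variables — this is Lemma~\ref{lem_combo} — one obtains that $v$ satisfies $|\{x \in B(x_0,r) : v(x,\tau) > 1\}| \ge \nu\, |B(x_0,3r)|$ for almost every $\tau$ in an interval of definite length, where $\nu$ depends only on $\delta$ and the structural data. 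Then, using the Caccioppoli inequality for $(v-\ell)_-$ on the dilated ball $B(x_0,3r)$ (or $B(x_0,2r_0)$), the parabolic Sobolev inequality of Lemma~\ref{cor:3.1}, and the measure information just obtained, a De~Giorgi iteration over shrinking levels $\ell_j \downarrow \ell_\infty > 0$ forces $|\{v < \ell_\infty\}| = 0$ on a cylinder, i.e. $v \ge \ell_\infty$ pointwise there; unwinding the time change and the normalization yields $\inf_{Q \cap Q'} u \ge M (r/r_0)^\theta$ on the asserted cylinder $Q' = B(x_0,2r_0)\times(t_0+\tilde T/2, t_0+\tilde T)$ with $\tilde T = \gamma (M(r/r_0)^\theta)^{2-p} r_0^p$. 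The exponent $\theta$ arises precisely from iterating the one-step expansion from radius $r$ out to radius $\sim r_0$: each doubling of the radius costs a fixed multiplicative factor $(r/r_0)^{\text{const}}$ in the lower bound and a fixed factor in the waiting time, and summing the geometric series of waiting times produces the clean final form of $\tilde T$.

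The main obstacle I expect is the bookkeeping in the time-change step and in the radius-iteration that produces $\theta$: because we are forbidden from rescaling in space and there is no group structure, one cannot simply normalize to a unit cylinder, so every constant and every time-length must be tracked explicitly through $\Lambda$, through the normalization of $v$, and through each stage of doubling the ball. Getting the dependence of $\tilde T$ on $M(r/r_0)^\theta$ exactly right — so that the output is itself in a form to which the lemma (or the Harnack machinery downstream) can be reapplied — requires care that the intrinsic waiting times at successive scales telescope correctly rather than accumulating an unbounded constant. A secondary technical point is ensuring that the Caccioppoli and Sobolev estimates are only ever invoked on balls contained in $B(x_0,4r_0) \Subset \Omega$ with radii below $R$, so that the structural constants $C_D, C_P, R$ from Definition~\ref{admissible} (all depending on $K = \bar\Omega$) remain valid throughout; the hypothesis $0 < 4r_0 < R$ and $B(x_0,4r_0)\Subset\Omega$ is exactly what guarantees this, but one must respect it at each step of the iteration.
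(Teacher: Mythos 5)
Your proposal follows essentially the same route as the paper: the measure propagation of Lemma~\ref{lem_positivity}, the logarithmic time change of Lemma~\ref{lem_combo}, the De~Giorgi machinery (which in the paper is split into the two stages of Lemma~\ref{lem_smallness} and Lemma~\ref{lem_preexpofpos}, first shrinking the sub-level set at level $2^{-H}$ and then running fast geometric convergence), and finally the iteration of the one-step expansion over $\log_2(r_0/r)$ doublings, with the per-step factor $\mu^\ast$ producing $\theta$ and the geometric sum of waiting times producing $\tilde T$. This matches the paper's proof of Lemma~\ref{lem_expofpos}, so the approach is correct and not materially different.
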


\subsection{Auxiliary lemmas}

\begin{lemma} \label{lem_positivity} Assume that $\A$ satisfies the structure conditions \eqref{admissiblesym} and let $k > 0$ and $0 < \gamma < 1$. Let $u$ be a non-negative weak super-solution to \eqref{eq_theeq} in an open set containing $\overline{B(x_0,2r)} \times [0,k^{2-p} \gamma^{p/\hat \delta+1} r^p /C]$, with $\hat \delta$ as in Lemma \ref{lem_2.1}, and
$B(x_0,2r) \Subset \Omega$ with $0 < 2r < R$. If $t=0$ is a Lebesgue instant for $u$, and
	\begin{equation*}
		\big |\big \{ x \in B(x_0,r) : u(x,0) > k \big \}\big | \geq \gamma |B(x_0,r)|,
	\end{equation*}
	then
	\begin{equation*}
		\big |\big \{ x \in B(x_0,r) : u(x,t) > \gamma k / 8 \big \}\big | \geq \frac{\gamma}{8} |B(x_0,r)|,
	\end{equation*}
	holds for all Lebesgue instants $t$ for $u$ satisfying
	\begin{equation*}
		0 < t < k^{2-p} \gamma^{p/\hat \delta+1} r^p / C,
	\end{equation*}
	for a constant $C = C(\data) \geq 1$.
\end{lemma}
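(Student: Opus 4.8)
The strategy is a one-step De Giorgi measure-propagation argument driven by a single energy estimate for the truncation $w \equiv (u-k)_-$. By Remark \ref{lem3.2rem}, $w$ is a weak sub-solution of an equation whose symbol is structurally similar to $\A$, and since $u \ge 0$ one has $0 \le w \le k$; hence Lemma \ref{lem_cacc} applies to $w$ with $\delta = 0$ and any $\xi \ge 1$. I will fix a free parameter $\epsilon \in (0,1)$ and, using Lemma \ref{lem_2.1}(2), pick a time-independent cut-off $\psi \in C_0(B(x_0,r)) \cap W_{\X,0}^{1,\infty}(B(x_0,r))$ with $\psi \equiv 1$ on $B(x_0,(1-\epsilon)r)$, $0 \le \psi \le 1$, and $|\X\psi| \le C/(\epsilon r)$. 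Applying Lemma \ref{lem_cacc} to $w$ with $\xi = 1$, $t_1 = 0$ (a Lebesgue instant by hypothesis) and $t_2 = t$ an arbitrary Lebesgue instant in the stated range, using $\psi$ itself as the cut-off (permitted by Remark \ref{lem3.2rem}, so the $\partial_t\phi^p$ term drops out) and discarding the non-negative gradient term on the left, I obtain
\begin{equation*}
\int_{B(x_0,r)} w(x,t)^2 \psi^p\, d\mu \le \int_{B(x_0,r)} w(x,0)^2 \psi^p\, d\mu + C(\data) \int_0^t \int_{B(x_0,r)} w^p |\X\psi|^p\, d\mu\, ds .
\end{equation*}

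The two terms on the right are then estimated as follows. Since $w(\cdot,0) = 0$ on $\{u(\cdot,0) > k\} \cap B(x_0,r)$, a set of measure at least $\gamma|B(x_0,r)|$, while $0 \le w \le k$ and $0 \le \psi \le 1$ elsewhere, the first term is at most $k^2(1-\gamma)|B(x_0,r)|$. Using $w \le k$ and $|\X\psi| \le C/(\epsilon r)$, the second term is at most $C(\data)\, \epsilon^{-p} r^{-p}\, k^{p-2}\, t \cdot k^2 |B(x_0,r)|$. Finally, since $\psi \equiv 1$ on $B(x_0,(1-\epsilon)r)$ and $w \le k$, the annular decay property (Lemma \ref{lem_2.1}(3)) gives $\int_{B(x_0,r)} w(x,t)^2\, d\mu \le \int_{B(x_0,r)} w(x,t)^2\psi^p\, d\mu + k^2|B(x_0,r)\setminus B(x_0,(1-\epsilon)r)| \le \int_{B(x_0,r)} w(x,t)^2\psi^p\, d\mu + C k^2 \epsilon^{\hat\delta}|B(x_0,r)|$. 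Combining the three bounds,
\begin{equation*}
\int_{B(x_0,r)} w(x,t)^2\, d\mu \le k^2 |B(x_0,r)| \left( 1 - \gamma + C \epsilon^{\hat\delta} + C(\data)\, \frac{k^{p-2} t}{\epsilon^p r^p} \right) .
\end{equation*}
Now I calibrate: choose $\epsilon$ with $C\epsilon^{\hat\delta} = \gamma/4$, i.e. $\epsilon \simeq \gamma^{1/\hat\delta}$, and restrict to $0 < t < k^{2-p}\gamma^{p/\hat\delta+1}r^p/C$, with $C = C(\data)$ large enough to absorb the factor $\epsilon^{-p} \simeq \gamma^{-p/\hat\delta}$ (this absorption is precisely what forces the exponent $p/\hat\delta + 1$ on $\gamma$), so that the last term is $\le \gamma/4$ as well. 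This yields $\int_{B(x_0,r)} w(x,t)^2\, d\mu \le k^2 (1 - \gamma/2)|B(x_0,r)|$ for every such Lebesgue instant $t$.

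To convert this $L^2$-bound into the desired measure estimate I argue by contradiction: if $|\{x \in B(x_0,r): w(x,t) \ge k(1-\gamma/8)\}| > (1-\gamma/8)|B(x_0,r)|$, then $\int_{B(x_0,r)} w(x,t)^2\, d\mu > k^2(1-\gamma/8)^3|B(x_0,r)|$, which contradicts the previous bound because $(1-\gamma/8)^3 \ge 1 - 3\gamma/8 > 1 - \gamma/2$ for $\gamma \in (0,1)$. Hence $|\{x \in B(x_0,r): w(x,t) < k(1-\gamma/8)\}| \ge (\gamma/8)|B(x_0,r)|$, and since $w(x,t) < k(1-\gamma/8)$ is equivalent to $u(x,t) > \gamma k/8$ (recall $0 \le w \le k$), this is exactly the claimed inequality. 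The only genuinely delicate step is the calibration of parameters in the second paragraph: the interplay between the annular-decay error, which requires $\epsilon \simeq \gamma^{1/\hat\delta}$, and the resulting $\epsilon^p$-shrinkage of the time interval, is what produces the precise time scale $k^{2-p}\gamma^{p/\hat\delta+1}r^p/C$; the remaining steps are routine energy-method bookkeeping.
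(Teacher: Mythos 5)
Your proof is correct and follows essentially the same route as the paper: the Caccioppoli estimate of Lemma \ref{lem_cacc} with $\xi=1$ applied to $(u-k)_-$ with a time-independent cut-off (via Remark \ref{lem3.2rem}), the annular decay property of Lemma \ref{lem_2.1}(3) to pass from $B(x_0,(1-\epsilon)r)$ to $B(x_0,r)$, and the calibration $C\epsilon^{\hat\delta}=\gamma/4$ that forces the time scale $k^{2-p}\gamma^{p/\hat\delta+1}r^p/C$. The only difference is cosmetic: you fix the level $\gamma k/8$ at the outset and conclude by a Chebyshev/contradiction step, whereas the paper introduces an auxiliary level parameter $s$ with $s^{p+1}/\epsilon^p=\gamma/8^{p+1}$; your variant lands directly on the level stated in the lemma.
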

\begin{proof}
	Let $T_1 = k^{2-p} s^{p+1} r^p/C_1$ where $s$ and $C_1$ are degrees of freedom to be chosen. Using Lemma \ref{lem_2.1} we find a function $\phi \in W_{\X,0}^{1,\infty}(B(x_0,r))$ such that $\phi = 1$ in $B(x_0,(1-\epsilon)r)$, $0 \leq \phi \leq 1$ and $|\X \phi|\leq C/(\epsilon r)$ with $\epsilon$ a degree of freedom to be chosen. Using, as we may by Remark \ref{lem3.2rem}, $(u-k)_{-}$ and $\phi$ in Lemma \ref{lem_cacc} with $\xi = 1$, we obtain
	\begin{align}
		\int_{B(x_0,(1-\epsilon)r)} (u(x,\tau) - k)_{-}^2 d\mu \leq
		&\int_{B(x_0,r)} (u(x,0) - k)_{-}^2 d\mu \notag\\
		&+ C_2 \int_{0}^T \int_{B(x_0,r)} (u - k)_{-}^p |\X \phi|^p d\mu dt \notag\\
		\leq &k^2 (1-\gamma) |B(x,r)| + \frac{k^p C T_1}{(\epsilon r)^p} |B(x,r)| \notag\\
		\leq &k^2 \left ( 1-\gamma + \frac{C}{C_1}\frac{s^{p+1}}{\epsilon^p} \right ) |B(x,r)|, \label{lem3.4--}
	\end{align}
	for all Lebesgue instants $\tau$ for $u$ satisfying $0 < \tau < T_1$. Estimating the left hand side in \eqref{lem3.4--} we see that
	\begin{equation} \label{lem3.4-}
		\underset{B(x_0,(1-\epsilon)r)}\int (u(x,\tau) - k)_{-}^2 d\mu \geq (1-s)^2 k^2 |\{x \in B(x_0,(1-\epsilon)r) : u(x,\tau) \leq k s \}|.
	\end{equation}
	Using \eqref{lem3.4-} and \eqref{lem3.4--} we can conclude that
	\begin{align} \label{lem3.4+}
		|\{x \in B(x_0,(1-\epsilon)r) : u(x,\tau) \leq k s \}| \leq
		\frac{1-\gamma + \frac{C}{C_1}\frac{s^{p+1}}{\epsilon^p}} {(1-s)^2}|B(x,r)|.
	\end{align}
	Next, using the $\hat \delta-$annular decay property of Lemma \ref{lem_2.1}, and \eqref{lem3.4+}, we see that
	\begin{eqnarray*}
		|\{x \in B(x_0,r) : u(x,\tau) \leq k s \}| &\leq&
		|\{x \in B(x_0,(1-\epsilon)r) : u(x,\tau) \leq k s \}|\\
&& + C \epsilon^{\hat \delta} |B(x_0,r)| \\
		&\leq&
		\frac{1-\gamma + \frac{C}{C_1}\frac{s^{p+1}}{\epsilon^p} + C \epsilon^{\hat \delta}}{(1-s)^2}|B(x,r)|.
	\end{eqnarray*}
Given $\gamma$ we choose $\epsilon$ so that $C \epsilon^{\hat \delta} = \gamma/4$, and $s$ so that $s^{p+1}/\epsilon^p = \gamma/8^{p+1}$. Finally, we let
$C_1$ be determined by $C/C_1=1/4$. Using these parameters we can conclude that
	\begin{align*}
		|\{x \in B(x_0,r) : u(x,\tau) \leq k s \}| &\leq \frac{1-\gamma + \gamma/4 + \gamma/4}{\big (1-(\gamma \epsilon^p)^{1/(p+1)}/8\big )^2} |B(x_0,r)| \\
		&\leq \frac{1-\gamma/2}{1-\gamma/4} |B(x_0,r)|,
	\end{align*}
	for all Lebesgue instants $\tau$ for $u$ satisfying $0 < \tau < T_1$.
\end{proof}

\begin{lemma} \label{lem_3.3}
	Assume that $\A$ satisfies the structure conditions \eqref{admissiblesym} and let $0 < \delta < 1$. Let $u$ be a non-negative weak super-solution to \eqref{eq_theeq} in an open set containing $\overline{B(x_0,4r)} \times [0,\hat T]$, where $B(x_0,4r) \Subset \Omega$, and $0 < r < R$, and suppose that $0$ is a Lebesgue instant for u. There exist constants $C_1 = C_1(\data,\allowbreak\delta) \geq 1$ and $C_2 = C_2(\data,\allowbreak\delta) \geq 1$, such that for all $M > 0$, satisfying $\hat T > r^p / (C_1 M^{p-2})$ and
	\begin{equation*}
		|\{ x \in B(x_0,r): u(x,0) > M \}| \geq \delta |B(x_0,r)|,
		% \quad \text{ for some }M > 0,
		% \quad 0 < \delta < 1,
	\end{equation*}
	there exists a Lebesgue instant $t^\ast$ for $u$ satisfying $0 < t^\ast < r^p / (C_1 M^{p-2})$, and a function
	\begin{equation*}
		w \in W_{\X,0}^{1,p}(B(x_0,2r)),
		\quad 0 \leq w \leq 1,
	\end{equation*}
	such that
	\begin{equation*}
		\big | \big \{ x \in B(x_0,r) : u(x,t^\ast) > \delta M / 8 \big \}\big | \geq \frac{\delta}{8} |B(x_0,r)|,
	\end{equation*}
	\begin{align*}
		w &= 1 \text{ a.e. in } \big \{x \in B(x_0,r): u(x,t^\ast) \geq \delta M / 8 \big \}, \\
		w &= 0 \text{ a.e. in } \big \{x \in B(x_0,2r): u(x,t^\ast) \leq \delta M / 16 \big \},
	\end{align*}
	and
	\begin{equation*}
		\fint_{B(x_0,2r)} |X w|^p d\mu \leq \frac{C_2}{r^p}.
	\end{equation*}
\end{lemma}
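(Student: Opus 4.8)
The plan is to decouple the conclusion into two statements made at a common time slice $t^\ast$: the measure lower bound, which follows almost immediately from Lemma~\ref{lem_positivity}, and the construction of the function $w$, which needs an $L^p$ bound in space on $\X u(\cdot,t^\ast)$ restricted to a thin level set. This last bound is obtained by integrating a Caccioppoli inequality in time and then choosing $t^\ast$ to be a ``good'' slice for the averaged inequality and for the positivity statement simultaneously.

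First I fix $C_1=C_1(\data,\delta)$ so that, setting $T:=r^p/(C_1 M^{p-2})$, the interval $[0,T]$ coincides with the time horizon $M^{2-p}\delta^{p/\hat\delta+1}r^p/C$ appearing in Lemma~\ref{lem_positivity} applied with $k=M$ and $\gamma=\delta$, where $C$ is the constant of that lemma. The hypothesis $\hat T>r^p/(C_1 M^{p-2})$ then ensures that $u$ is a non-negative super-solution in a neighbourhood of $\overline{B(x_0,2r)}\times[0,T]$, so Lemma~\ref{lem_positivity} gives
\[
|\{x\in B(x_0,r):u(x,t)>\delta M/8\}|\ \geq\ \tfrac{\delta}{8}\,|B(x_0,r)|
\]
for every Lebesgue instant $t\in(0,T)$. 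Since almost every $t\in(0,T)$ is a Lebesgue instant for $u$ and, because $u\in L^p(0,\hat T;W_{\X}^{1,p})$, also satisfies $u(\cdot,t)\in W_{\X}^{1,p}(B(x_0,4r))$, the set $G_1\subset(0,T)$ of such $t$ for which all three properties hold has full measure in $(0,T)$.

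Next, for the gradient control I apply the Caccioppoli inequality to $v:=(u-k)_-$ with $k=\delta M/8$, which by Remark~\ref{lem3.2rem} is a bounded sub-solution of a structurally similar equation: using Lemma~\ref{cor_cacc} with $\xi=1$ and test function $\phi=\phi_1(x)\zeta(t)$, where $\phi_1\in C_0(B(x_0,4r))\cap W_{\X}^{1,\infty}(B(x_0,4r))$ equals $1$ on $B(x_0,2r)$ with $|\X\phi_1|\leq C/r$ (Lemma~\ref{lem_2.1}(2)) and $\zeta\in C_0^\infty(0,T)$ equals $1$ on $(T/4,3T/4)$ with $|\partial_t\zeta|\leq C/T$. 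Since $0\leq v\leq\delta M/8$, the doubling property, and the elementary inequality $(\delta M)^pT/r^p=\delta^pM^2/C_1\leq(\delta M)^2$ (using $\delta^p\leq\delta^2$ and $C_1\geq1$) bound the right-hand side of Lemma~\ref{cor_cacc} by $C(\data)(\delta M)^2|B(x_0,r)|$, whence, as $\phi\equiv1$ on $B(x_0,2r)\times(T/4,3T/4)$,
\[
\int_{T/4}^{3T/4}\!\!\int_{B(x_0,2r)}|\X v|^p\,d\mu\,dt\ \leq\ C(\data)(\delta M)^2|B(x_0,r)|.
\]
By Chebyshev's inequality the set $G_2$ of $t\in(T/4,3T/4)$ with $\int_{B(x_0,2r)}|\X v(\cdot,t)|^p\,d\mu\leq C(\data)(\delta M)^2|B(x_0,r)|/T$ (with a larger absolute constant) has measure at least a fixed fraction of $T$; since $G_1$ has full measure in $(0,T)$, $G_1\cap G_2$ is nonempty and I fix $t^\ast\in G_1\cap G_2$. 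Then $0<t^\ast<r^p/(C_1 M^{p-2})$, $t^\ast$ is a Lebesgue instant carrying the required measure estimate, and, using $1/T=C_1 M^{p-2}/r^p$,
\[
\int_{B(x_0,2r)}|\X(u(\cdot,t^\ast)-\delta M/8)_-|^p\,d\mu\ \leq\ C(\data,\delta)\,M^p\,|B(x_0,r)|\,r^{-p}.
\]

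Finally I set $w:=\phi_0(x)\bigl(1-\tfrac{16}{\delta M}(u(x,t^\ast)-\delta M/8)_-\bigr)_+$, where $\phi_0\in C_0(B(x_0,2r))\cap W_{\X}^{1,\infty}(B(x_0,2r))$ equals $1$ on $B(x_0,r)$ with $|\X\phi_0|\leq C/r$ (Lemma~\ref{lem_2.1}(2)). Since $u(\cdot,t^\ast)\in W_{\X}^{1,p}(B(x_0,4r))$, the observation recorded right after the definition of $W_{\X,0}^{1,p}$ gives $w\in W_{\X,0}^{1,p}(B(x_0,2r))$, and clearly $0\leq w\leq1$; the explicit form of the truncation gives $w\equiv1$ on $\{x\in B(x_0,r):u(x,t^\ast)\geq\delta M/8\}$ and $w\equiv0$ on $\{x\in B(x_0,2r):u(x,t^\ast)\leq\delta M/16\}$. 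For the gradient, the chain and product rules yield $|\X w|\leq|\X\phi_0|+\tfrac{16}{\delta M}|\X u(\cdot,t^\ast)|\chi_{\{\delta M/16<u(\cdot,t^\ast)<\delta M/8\}}$, so that averaging over $B(x_0,2r)$, bounding $|\X u|^p\chi_{\{\delta M/16<u<\delta M/8\}}\leq|\X(u-\delta M/8)_-|^p$, and inserting the bound just obtained gives $\fint_{B(x_0,2r)}|\X w|^p\,d\mu\leq C_2(\data,\delta)r^{-p}$. The main obstacle I anticipate is the bookkeeping of the time scales—aligning the horizon $T$ from Lemma~\ref{lem_positivity}, the interval of the time-integrated Caccioppoli estimate, and the window containing $t^\ast$, all inside $[0,\hat T]$—together with tracking where the $\delta$-dependence (the passage $\delta^p\to\delta^2$ and the factor $\delta^{2-p}$) is absorbed; this is harmless since $C_1$ and $C_2$ are allowed to depend on $\delta$, and the truncation construction and chain-rule estimate for $|\X w|$ are then routine.
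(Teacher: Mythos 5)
Your proposal is correct and follows essentially the same route as the paper: a Caccioppoli estimate (Lemma \ref{cor_cacc} with $\xi=1$ via Remark \ref{lem3.2rem}) applied to $(u-\delta M/8)_-$ with space--time cutoffs, a mean-value/pigeonhole selection of a good Lebesgue instant $t^\ast$, the choice $w=\phi_0\cdot\bigl(1-\tfrac{16}{\delta M}(u(\cdot,t^\ast)-\delta M/8)_-\bigr)_+$ (identical to the paper's $\eta(\cdot,t^\ast)\psi$), and Lemma \ref{lem_positivity} for the measure lower bound at $t^\ast$. Your bookkeeping of the time horizon $T=r^p/(C_1M^{p-2})$, aligned with the horizon of Lemma \ref{lem_positivity}, is if anything slightly more careful than the paper's use of $\hat T$ in its cutoffs, and the direct product-rule estimate of $|\X w|$ replaces the paper's passage through $|\X(\psi v)|$ without changing the substance.
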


\begin{proof}
 Let $K=\frac{\delta}{8} M$. Since $u$ is a super-solution we see that $v=(u-k)_{-}$ is a non-negative sub-solution. Let $\psi \in W_{\X, 0}^{1,\infty}(B(x_0,2r))$ be the function in Lemma \ref{lem_2.1} such that
	\begin{equation*}
		0 \leq \psi \leq 1, \quad \psi = 1 \text{ in } B(x_0,r), \quad \text{and } |\X \psi| \leq \frac{C}{r}.
	\end{equation*}
Let $\zeta \in C^{\infty}(0,\hat T)$ be such that
	\begin{equation*}
		0 \leq \zeta \leq 1,\ \zeta(t)=1,\ \text{as}\ \hat T/2 \leq t \leq \hat T, \ \zeta(0) = 0,
	\end{equation*}
	and $\zeta_t \leq C / \hat T$. By Lemma \ref{cor_cacc} with $\xi = 1$, and Remark \ref{lem3.2rem} with test function $\psi \zeta$, we see that
	\begin{equation} \label{lem3.5+}
		\int_{\hat T/2}^{\hat T} \fint_{B(x_0,2r)} |\X (\psi v)|^p d\mu dt \leq C(k^p \hat T r^{-p} + k^2).
	\end{equation}
	Furthermore,
	\begin{equation*}
		\eta = \frac{2}{k} (k/2-v)_{+},
	\end{equation*}
	is a function such that $\eta = 0$ almost everywhere in $\{u \leq k/2\}$ and $\eta = 1$ almost everywhere in $\{u \geq k\}$. Moreover from \eqref{lem3.5+},
	\begin{eqnarray*}
		\int_{\hat T / 2}^{\hat T} \fint_{B(x_0,2r)} |\X (\psi \eta)|^p d\mu dt
		 &\leq&
		\frac{2^p}{k^p} \int_{\hat T / 2}^{\hat T} \fint_{B(x_0,2r)} |\X (\psi v)|^p d\mu dt \\
		&\leq& C(\hat T r^{-p} + k^{2-p}).
	\end{eqnarray*}
	Therefore, there exists a time $\hat T / 2 < t^{\ast} < \hat T$ such that
	\begin{equation} \label{lem3.5++}
		\fint_{B(x_0,2r)} |\X (\psi \eta(x,t^\ast))|^p d\mu \leq C \left ( r^{-p} + \frac{1}{\hat T k^{p-2}} \right ).
	\end{equation}
	Finally, we choose the function $w = \eta(\cdot,t^\ast) \psi$, which is in $w \in W_{\X,0}^{1,p}(B(x,2r))$ by \eqref{lem3.5++}. The largeness of the level-set at time $t^\ast$ follows from Lemma \ref{lem_positivity}, since we allow $C_1$ to depend on $\delta$.
\end{proof}

\begin{lemma} \label{lem_combo}
	Let $u,r, \hat T, M, \delta, C_1$ and $t^\ast$ be as in Lemma \ref{lem_3.3}. There exist constants $\kappa = \kappa (\data,\allowbreak \delta)\geq 1$, and $\nu = \nu(\data,\allowbreak \delta)\in (0,1)$, such that if we set \begin{equation*}
		\Lambda(t) = \frac{1}{\kappa^{p-1} (p-2) r^{-p}} \log(1+\kappa (p-2) \frac{1}{r^p} M^{p-2}t),
	\end{equation*}
and $\tau^\ast \equiv \Lambda(t^\ast)$, then $0 < \tau^\ast < \frac{r^p}{\kappa^{p-2} C_1}$ and
	\begin{equation*}
		v(x,t) = \frac{\kappa \exp(\kappa^{p-1} \frac{1}{r^p} t)}{M} u(x,\Lambda^{-1}(t)),
	\end{equation*}
	is a weak super-solution, in $B(x_0,4r) \times (0,\Lambda(\hat T))$, to an equation as in \eqref{eq_theeq} but with a new symbol, $\tilde \A$, which is structurally similar to $\A$. Furthermore,
	\begin{equation*}
		\big |\big \{ x \in B(x_0,3r) : v(x,t) > 1 \big \}\big | \geq \nu |B(x_0,3r)|,
	\end{equation*}
	for almost every $\tau^\ast < t < \Lambda (\hat T)$.
\end{lemma}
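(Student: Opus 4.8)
The plan is to verify directly that the stated change of variables $t\mapsto\Lambda(t)=\tau$ transforms a non-negative weak super-solution $u$ of \eqref{eq_theeq} into a non-negative weak super-solution $v$ of a structurally similar equation, and then to transfer the measure-theoretic conclusion of Lemma \ref{lem_3.3} through the substitution. First I would record the elementary calculus facts about $\Lambda$: it is smooth and strictly increasing on $(0,\hat T)$ with $\Lambda(0)=0$, its inverse is $\Lambda^{-1}(\tau)=\frac{r^p}{\kappa(p-2)M^{p-2}}\big(\exp(\kappa^{p-1}(p-2)r^{-p}\tau)-1\big)$, and
\begin{equation*}
	(\Lambda^{-1})'(\tau)=\frac{r^p}{\kappa^{p-2}M^{p-2}}\,\exp\!\big(\kappa^{p-1}(p-2)r^{-p}\tau\big).
\end{equation*}
From $\hat T> r^p/(C_1M^{p-2})$ and the definition of $t^\ast$ (so $t^\ast< r^p/(C_1M^{p-2})$) one gets $\kappa(p-2)r^{-p}M^{p-2}t^\ast<\kappa(p-2)/C_1$, hence $\tau^\ast=\Lambda(t^\ast)<\frac{1}{\kappa^{p-1}(p-2)r^{-p}}\log(1+\kappa(p-2)/C_1)<\frac{r^p}{\kappa^{p-2}C_1}$ after using $\log(1+x)\le x$; this gives the asserted bound on $\tau^\ast$.

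Next I would carry out the PDE computation. Writing $v(x,\tau)=\frac{\kappa}{M}\exp(\kappa^{p-1}r^{-p}\tau)\,u(x,\Lambda^{-1}(\tau))=:g(\tau)\,u(x,\Lambda^{-1}(\tau))$, one has $\X v = g(\tau)\,(\X u)(\cdot,\Lambda^{-1}(\tau))$, and formally
\begin{equation*}
	\partial_\tau v = g'(\tau)u + g(\tau)(\partial_t u)(\Lambda^{-1}(\tau))\,(\Lambda^{-1})'(\tau).
\end{equation*}
Using the equation $\partial_t u=-\sum_i X_i^\ast\A_i(x,t,u,\X u)$, the multiplicative constants must be arranged so that the factor $g(\tau)^{p-1}(\Lambda^{-1})'(\tau)$ appearing in front of the diffusion term equals the factor $g(\tau)$ in front of $v$ times a power of $g$; the choice of $\Lambda$ is exactly designed so that $g'(\tau)u = -$ (the extra linear-in-$v$ term produced by rescaling the diffusion), i.e. the exponential growth of $v$ precisely cancels the decay of the super-solution. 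Concretely one checks $g'(\tau)/g(\tau)=\kappa^{p-1}r^{-p}$ and $g(\tau)^{p-2}(\Lambda^{-1})'(\tau)=\frac{r^p}{\kappa^{p-2}M^{p-2}}\cdot\frac{\kappa^{p-2}}{M^{p-2}}\cdot\text{(cancellation)}$ so that defining
\begin{equation*}
	\tilde\A(x,\tau,s,F)=g(\tau)^{p-1}(\Lambda^{-1})'(\tau)\,\A\big(x,\Lambda^{-1}(\tau),\,g(\tau)^{-1}s,\,g(\tau)^{-1}F\big)
\end{equation*}
yields a symbol satisfying $\tilde\A\cdot F\ge \A_0|F|^p$ and $|\tilde\A|\le\A_1|F|^{p-1}$ with the \emph{same} structural constants — because the scalar prefactor $g^{p-1}(\Lambda^{-1})'$ is matched by the homogeneity $p$ (resp. $p-1$) of the bounds in $\X u$ under $F\mapsto g^{-1}F$ together with the remaining algebraic identity forced by the definition of $\kappa$. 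The measurability and continuity conditions (i)–(ii) for $\tilde\A$ are immediate since $\Lambda^{-1}$ and $g$ are smooth. To make this rigorous at the level of weak solutions (not pointwise), I would take a test function $\eta(x,\tau)$ admissible on $B(x_0,4r)\times(0,\Lambda(\hat T))$, substitute $\tilde\eta(x,t)=\eta(x,\Lambda(t))$ into the weak formulation \eqref{eq_sol} for $u$ on the time interval $(0,\hat T)$, change variables $t=\Lambda^{-1}(\tau)$ in the integrals (the Jacobian being $(\Lambda^{-1})'$), and integrate by parts in $\tau$ the term $\int u\,\partial_t\tilde\eta$; the boundary terms vanish by the compact-support hypothesis on $\eta$, and collecting terms produces exactly the weak inequality defining $v$ as a super-solution with symbol $\tilde\A$. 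Non-negativity of $v$ is clear since $g>0$ and $u\ge0$.

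Finally, the level-set estimate: from Lemma \ref{lem_3.3} we have $|\{x\in B(x_0,r):u(x,t^\ast)>\delta M/8\}|\ge\frac\delta8|B(x_0,r)|$. At $\tau=\tau^\ast=\Lambda(t^\ast)$ we have $v(x,\tau^\ast)=g(\tau^\ast)M^{-1}u(x,t^\ast)$, and since $0<\tau^\ast<r^p/(\kappa^{p-2}C_1)$ is bounded, $g(\tau^\ast)=\kappa\exp(\kappa^{p-1}r^{-p}\tau^\ast)\ge\kappa$; thus $u(x,t^\ast)>\delta M/8$ forces $v(x,\tau^\ast)>\kappa\delta/8$, and choosing $\kappa=\kappa(\data,\delta)\ge\max(8/\delta,\,\text{value needed in the PDE identity})$ makes $v(x,\tau^\ast)>1$ on that set, giving $|\{x\in B(x_0,3r):v(x,\tau^\ast)>1\}|\ge\frac\delta8|B(x_0,r)|\ge\nu|B(x_0,3r)|$ with $\nu=\nu(\data,\delta)$ obtained from the doubling inequality $|B(x_0,r)|\ge C_D^{-2}|B(x_0,3r)|$ (Lemma \ref{lem_2.1}(1)). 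To upgrade this from the single instant $\tau^\ast$ to \emph{almost every} $\tau\in(\tau^\ast,\Lambda(\hat T))$, I would invoke Lemma \ref{lem_positivity} applied to the super-solution $v$ with base time $\tau^\ast$: the hypothesis \eqref{eq1} holds at $\tau^\ast$ with $k=1$, $\gamma=\nu$, so the measure of $\{v>\nu/8\}$ stays $\ge(\nu/8)|B(x_0,3r)|$ on a time interval of the form $(\tau^\ast,\,c\,\nu^{p/\hat\delta+1}r^p)$; since $v\ge1$ there is scale-invariance in the threshold, and after relabelling $\nu$ one obtains the stated conclusion on all of $(\tau^\ast,\Lambda(\hat T))$ provided $\kappa$ (hence $C_1$) is chosen large enough that $\Lambda(\hat T)$ does not exceed that interval — this is where the freedom in $\kappa$ is used a second time.

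The main obstacle I anticipate is the bookkeeping in the second paragraph: verifying that the scalar prefactor $g(\tau)^{p-1}(\Lambda^{-1})'(\tau)$ in $\tilde\A$, combined with the rescaling $F\mapsto g(\tau)^{-1}F$ inside $\A$, leaves the structural constants $\A_0,\A_1$ genuinely unchanged (not merely changed by a bounded factor) — this is a nontrivial algebraic coincidence engineered by the precise logarithmic form of $\Lambda$ and the exponent $\kappa^{p-1}$, and getting every power of $\kappa$, $M$, $r$ and $p-2$ to cancel correctly, uniformly as $p\to2$, requires care. The "uniform as $p\to2$" clause in particular needs one to check that $\kappa^{p-2}\to1$ and $(p-2)\Lambda\to$ a finite limit, so that no constant blows up in that regime.
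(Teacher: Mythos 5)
Your reduction of the transformation step is in the right spirit (the paper itself gives no details here, deferring to Kuusi's Lemmas 3.4, 3.5 and Corollary 3.6), but the final step of your plan has a genuine gap. A single application of Lemma \ref{lem_positivity} to $v$ from the base instant $\tau^\ast$ can only give the measure bound on a time interval of length at most $C^{-1}k^{2-p}\gamma^{p/\hat\delta+1}r^p$, while $\Lambda(\hat T)$ is not under your control: in Lemma \ref{lem_3.3} the time $\hat T$ is only bounded \emph{below} (by $r^p/(C_1M^{p-2})$) and may be arbitrarily large, whereas $\kappa$ and $\nu$ are required to depend only on $\data$ and $\delta$, not on $\hat T$, $M$ or $r$. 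Choosing $\kappa$ large does not rescue this: it raises the starting level for $v$, which (since $p>2$) \emph{shrinks} the admissible propagation interval, and it cannot force $\Lambda(\hat T)$ below a fixed bound for all admissible $\hat T$. Moreover each application of Lemma \ref{lem_positivity} lowers the level by the factor $\gamma/8$, so ``relabelling $\nu$'' cannot keep the threshold at the fixed value $1$ demanded by the statement. The missing idea — and the actual content of the cited argument in \cite{K} — is to iterate the measure propagation on $u$ itself, always from the \emph{same} instant $t^\ast$, at a geometric sequence of levels $k_j=\epsilon^j\,\delta M/8$: since $|\{u(\cdot,t^\ast)>k_j\}\cap B(x_0,r)|\geq \tfrac{\delta}{8}|B(x_0,r)|$ for every $j$, and the admissible time spans are proportional to $k_j^{2-p}r^p$, which grow geometrically because $p>2$, their union exhausts $(t^\ast,\hat T)$. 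This yields, with a measure fraction that does \emph{not} degrade, a lower level decaying like $M\bigl(1+cM^{p-2}(t-t^\ast)r^{-p}\bigr)^{-1/(p-2)}$; only then does the logarithmic change of variables, with $\kappa=\kappa(\data,\delta)$ chosen to absorb the $\delta$-dependent constants, convert this decaying level into the fixed level $1$ for $v$ on all of $(\tau^\ast,\Lambda(\hat T))$.

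On the transformation step itself, the outline is correct but the bookkeeping is off in two places you should repair: $(\Lambda^{-1})'(\tau)=\frac{\kappa^{p-2}}{M^{p-2}}\exp\bigl((p-2)\kappa^{p-1}r^{-p}\tau\bigr)=g(\tau)^{p-2}$, not $\frac{r^p}{\kappa^{p-2}M^{p-2}}\exp(\cdots)$; and the chain rule produces the prefactor $g(\tau)\,(\Lambda^{-1})'(\tau)$ in front of $\A$, not $g(\tau)^{p-1}(\Lambda^{-1})'(\tau)$ — with your prefactor the coercivity constant becomes $(\Lambda^{-1})'(\tau)\A_0$, which depends on $M$ and $\tau$, so the symbols would not be structurally similar. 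With the correct prefactor, the identity $g(\tau)^{2-p}(\Lambda^{-1})'(\tau)=1$ gives back exactly $\A_0$ and $\A_1$, and the term $g'(\tau)u\geq 0$ is simply discarded using the super-solution inequality and $u\geq0$; your verification of $0<\tau^\ast<r^p/(\kappa^{p-2}C_1)$ via $\log(1+x)\leq x$ is fine.
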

\begin{proof}
	Using Lemma \ref{lem_3.3} we see that Lemma \ref{lem_combo} follows along the same lines as the corresponding proof in \cite[Lemma 3.4, Lemma 3.5 and Corollary 3.6]{K}. We omit further details.
\end{proof}

\begin{lemma} \label{lem_smallness} Let $u,r, \hat T, M, \delta, C_1$ and $t^\ast$ be as in Lemma \ref{lem_3.3}, and let $v$, $\Lambda$, $\kappa$, $\tau^\ast$, be as in Lemma \ref{lem_combo}.
If $0 < \nu^\ast < 1$, there exists a constant $H = H(\data,\allowbreak \nu^\ast)\in\mathbb Z_+$ such that whenever
$$\tilde T \equiv \frac{r^p}{\kappa^{p-2}C_1} 2^{1+H(p-2)} < \Lambda(\hat T)/4,$$ then
	\begin{equation*}
		\big |\big \{ (x,t) \in B(x_0,3r)\times (\tilde T,4 \tilde T) : v(x,t) \leq 2^{-H} \big \}\big | \leq \nu^\ast |B(x_0,3r)\times (\tilde T, 4 \tilde T)|.
	\end{equation*}
	\end{lemma}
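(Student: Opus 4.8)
\medskip

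The plan is to combine a logarithmic energy estimate for $v$ with the discrete isoperimetric (De~Giorgi) inequality of Lemma~\ref{lem_degiorgi}, applied on each time slice, and then to telescope over the dyadic levels $2^{-j}$, $j=0,\dots,H-1$. Throughout write $B=B(x_0,3r)$ and, for a Lebesgue instant $t\in(\tilde T,4\tilde T)$, put $A_j(t)=|\{x\in B:v(x,t)<2^{-j}\}|$, so that the quantity to be estimated is $\int_{\tilde T}^{4\tilde T}A_H(t)\,dt$. The goal is to show $\int_{\tilde T}^{4\tilde T}A_H(t)\,dt\le (C/H)\,|B|\,\tilde T$ for some $C=C(\data,\delta)$; since $|B\times(\tilde T,4\tilde T)|=3\tilde T|B|$ this gives relative measure $\le C/H$, and one then picks $H=H(\data,\delta,\nu^\ast)$ with $C/H\le3\nu^\ast$. (The $\delta$-dependence enters only through $\nu$, $\kappa$, $C_1$ from Lemmas~\ref{lem_3.3} and~\ref{lem_combo}; in particular $v$ is a non-negative weak super-solution of a structurally similar equation on $B(x_0,4r)\times(0,\Lambda(\hat T))$ and $(\tilde T,4\tilde T)\subset(\tau^\ast,\Lambda(\hat T))$.)

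\medskip

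\textbf{Step 1 (logarithmic estimate).} I would first show that, with $\phi=\psi(x)\zeta(t)$ a standard cut-off, $\psi\equiv1$ on $B$, $\supp\psi\subset B(x_0,4r)$, $|\X\psi|\le C/r$, and $\zeta\equiv1$ on $(\tilde T,4\tilde T)$, supported in a slightly larger interval contained in $(0,\Lambda(\hat T))$ (which exists since $4\tilde T<\Lambda(\hat T)$) and vanishing at the ends of its support,
\begin{equation*}
	\int_{\tilde T}^{4\tilde T}\!\!\int_{B}\chi_{\{2^{-H}<v<1\}}\,\frac{|\X v|^p}{v^p}\,d\mu\,dt\ \le\ C\,\frac{|B|\,\tilde T}{r^p},\qquad C=C(\data,\delta).
\end{equation*}
For $p>2$ this follows from Lemma~\ref{lem_cacc} applied to the truncation $\bar v=\max(v,2^{-H})$ — which is again a super-solution of a structurally similar equation and is $\ge 2^{-H}$ — with the exponent $\xi=1-p$, admissible since $1-p\notin\{-1,0\}$ when $p>2$: the energy term of Lemma~\ref{lem_cacc} is exactly $\int\!\int|\X\log\bar v|^p\phi^p\,d\mu\,dt\ge\int\!\int\chi_{\{2^{-H}<v<1\}}|\X\log v|^p\phi^p\,d\mu\,dt$, the boundary term vanishes because $\phi$ vanishes at the endpoints of the time support, and the right-hand side of Lemma~\ref{lem_cacc}, which involves $\bar v^{\,2-p}\le 2^{H(p-2)}$ and $|\X\phi|^p\le C r^{-p}$, is bounded by $C\big(2^{H(p-2)}|B(x_0,4r)|+|B(x_0,4r)|\,\tilde T\,r^{-p}\big)$; the definition of $\tilde T$ forces $2^{H(p-2)}r^p\le C\tilde T$ and doubling gives $|B(x_0,4r)|\le C|B|$, whence the bound. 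For $p=2$ the value $\xi=1-p=-1$ is excluded from Lemma~\ref{lem_cacc}, and one uses instead the classical Moser-type logarithmic estimate for super-solutions (as in \cite{K}, \cite{K1}), which yields the same conclusion (now $2^{H(p-2)}=1$ and $\tilde T r^{-p}$ is bounded). The essential feature is that the right-hand side carries no power of $H$ beyond that already hidden in $\tilde T$; applying instead the ordinary Caccioppoli inequality of Lemma~\ref{cor_cacc} on each dyadic annulus $\{2^{-j-1}<v<2^{-j}\}$ separately would cost an extra factor $H$ and break the argument.

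\medskip

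\textbf{Step 2 (slicewise De~Giorgi and telescoping).} Fix a Lebesgue instant $t\in(\tilde T,4\tilde T)$ with $A_H(t)>0$. Since $(\tilde T,4\tilde T)\subset(\tau^\ast,\Lambda(\hat T))$, Lemma~\ref{lem_combo} gives $|\{x\in B:v(x,t)>2^{-j}\}|\ge|\{x\in B:v(x,t)>1\}|\ge\nu|B|$ for all $j\ge0$. Applying Lemma~\ref{lem_degiorgi} with $l=2^{-j}$, $k=2^{-j-1}$, and using $A_{j+1}(t)\ge A_H(t)$ for $j+1\le H$ together with $A_H(t)\le|B|$, after raising to the power $p$ and simplifying the powers of $|B|$ one obtains
\begin{equation*}
	2^{-(j+1)p}\,c\,A_H(t)\ \le\ C\,r^p\int_{B}\chi_{\{2^{-j-1}<v<2^{-j}\}}|\X v|^p\,d\mu ,\qquad c=c(\data,\delta)>0 .
\end{equation*}
On that annulus $2^{(j+1)p}\le 2^p v^{-p}$, so after multiplying by $2^{(j+1)p}$ the right-hand side is at most $C\,r^p\int_{B}\chi_{\{2^{-j-1}<v<2^{-j}\}}|\X\log v|^p\,d\mu$; summing over $j=0,\dots,H-1$ (the annuli are disjoint with union $\{2^{-H}<v<1\}$) and integrating in $t$ over $(\tilde T,4\tilde T)$ gives
\begin{equation*}
	c\,H\int_{\tilde T}^{4\tilde T}A_H(t)\,dt\ \le\ C\,r^p\int_{\tilde T}^{4\tilde T}\!\!\int_{B}\chi_{\{2^{-H}<v<1\}}\frac{|\X v|^p}{v^p}\,d\mu\,dt\ \le\ C\,|B|\,\tilde T,
\end{equation*}
the last inequality being Step~1. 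This is the desired bound, and the choice of $H$ completes the proof.

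\medskip

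\textbf{Main obstacle.} The delicate point is Step~1, and specifically the absence of any extra factor of $H$ on its right-hand side. This forces one to exploit the intrinsic parabolic $p$-scaling — $\tilde T\simeq(2^{-H})^{2-p}r^p$ is precisely the intrinsic time scale at the level $2^{-H}$ — and, since (as stressed in the introduction) no rescaling in the space variable is available in the \CC{} setting, the powers of $r$, $\tilde T$ and $2^{H(p-2)}$ have to be tracked by hand; it is only at the end, on comparing with the size of the cylinder, that the $H$-growth cancels. A secondary, purely technical, point is the degenerate endpoint $p=2$, handled by the classical logarithmic estimate, with constants remaining uniform as $p\to2$.
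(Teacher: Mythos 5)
Your overall route is viable and genuinely different from the paper's: the paper never uses a logarithmic (negative-exponent) Caccioppoli estimate. Instead, for each level $k_j=2^{-j}$ it applies the Caccioppoli inequality of Lemma \ref{lem_cacc} with $\xi=1$ to the truncation $(v-k_j)_-$ (admissible with $\delta=0$ by Remark \ref{lem3.2rem}, so no lower bound on $v$ is ever needed), obtaining $\iint|\X (v-k_j)_-|^p\lesssim k_j^p r^{-p}|Q|$, and then combines this with the slice-wise De Giorgi inequality (Lemma \ref{lem_degiorgi}), the positivity from Lemma \ref{lem_combo} and H\"older in time; summing the resulting inequalities raised to the power $p/(p-1)$ over $j=0,\dots,H-1$ gives decay $C/H^{(p-1)/p}$, after which $H$ is chosen. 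Your Step 2 is the same De Giorgi/telescoping mechanism, but you convert the annular gradient terms into $|\X\log v|^p$ and close with a single log-estimate, which buys the slightly better linear decay $C/H$ at the price of needing strict positivity of the function to which Lemma \ref{lem_cacc} with $\xi=1-p$ is applied. (Minor points: the $\xi=1-p=-1$ issue at $p=2$ and the set $\{v\le 2^{-H}\}$ versus $A_H(t)=|\{v<2^{-H}\}|$ are harmless, and your constants, like the paper's, pick up a dependence on $\delta$ through $\nu$, $\kappa$, $C_1$.)

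There is, however, one genuine error in Step 1 as written: $\bar v=\max(v,2^{-H})$ is \emph{not} in general a weak super-solution of a structurally similar equation. Truncation from above, $\min(v,k)$, preserves super-solutions; truncation from below by a constant does not (already for $-\Delta$: $u(x)=-|x|$ is a super-solution, while $\max(u,-1/2)$ has upward kinks and fails to be one), so Lemma \ref{lem_cacc} with $\xi=1-p$ cannot be applied to $\bar v$. The fix is standard and costless: work with $w=v+2^{-H}$, which \emph{is} a super-solution of an equation with the translated symbol $\tilde\A(x,t,s,F)=\A(x,t,s-2^{-H},F)$ (structurally similar, since \eqref{admissiblesym} does not involve the $s$-slot, and the added constant drops out of the weak formulation), satisfies $w\ge 2^{-H}$, and on $\{2^{-H}<v<1\}$ obeys $w\le 2v$, $\X w=\X v$, so $|\X v|^pv^{-p}\le 2^p|\X w|^pw^{-p}$. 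With this replacement your computation in Step 1 (the $w^{2-p}\le 2^{H(p-2)}$ bound, $2^{H(p-2)}r^p\le C\tilde T$ from the definition of $\tilde T$, doubling for $|B(x_0,4r)|$, and the one-sided time term $(\partial_t\phi^p)_+$) goes through verbatim, and the rest of your argument is correct.
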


\begin{proof} In the following $H\in \mathbb Z_+$ is a degree of freedom to be chosen. Given $H$ we let $k_j = 2^{-j}$, for $j = 0,1,\ldots, H$, and $$\tilde T = 2 r^p \frac{k_H^{2-p}}{\kappa^{p-2}C_1} < \frac{\Lambda(\hat T)}{4}.$$
Let $\phi_1 \in W_{\X,0}^{1,\infty}(B(x_0,4r))$, be as in Lemma \ref{lem_2.1} such that $\phi_1 = 1$ in $B(x_0,3r)$. With $\tau^\ast$ as in Lemma \ref{lem_combo} we let $\phi_2 \in C^\infty(\tau^\ast,4 \tilde T)$ be such that $\phi_2 = 1$ for $t \in (\tilde T, 4 \tilde T)$ and such that $\phi_2(\tau^\ast) = 0$. Then $\phi = \phi_1 \phi_2$ is a test-function vanishing on the parabolic boundary of $B(x_0,4r) \times (\tau^\ast, 4 \tilde T)$, with $\tau^\ast$ as in Lemma \ref{lem_combo}, $\phi = 1$ in $B(x_0,3r) \times (\tilde T, 4\tilde T)$ and $|\X \phi|\leq C/r$, $|\phi_t|\leq C / \tilde T$. Note that $\tilde T > 2 \tau^\ast$.
	Using Remark \ref{lem3.2rem} we have that
	\begin{eqnarray} \label{eqpreI1I2+}
		\int_{\tau^\ast}^{4 \tilde T} \int_{B(x_0,3r)} |\X (v-k_j)_{-}|^p d\mu dt &\leq& C \int_{\tau^\ast}^{4 \tilde T} \int_{B(x_0,4r)} (v-k_j)_{-}^{2} \left (\frac{\partial \phi^p}{\partial t} \right)_{+} d\mu dt \notag\\
		&&+ C\int_{\tau^\ast}^{4 \tilde T} \int_{B(x_0,4r)} (v-k_j)_{-}^{p} |\X \phi|^p d\mu dt\notag\\
& \equiv& C (I_1 + I_2).
	\end{eqnarray}
	Using the definition of $k_j$ and $\phi$ we get
	\begin{equation}
		\label{eqI1I2+}
		I_1+I_2 \leq \frac{C k_j^p}{r^p}\big | B(x_0,4r) \times (\tilde T, 4\tilde T) \big|.
	\end{equation}
	Using \eqref{eqpreI1I2+}, \eqref{eqI1I2+}, Lemma \ref{lem_degiorgi}, Lemma \ref{lem_combo} and H\"older's inequality we obtain
	\begin{align*}
		\big |\big \{&(x,t) \in B(x_0,3r) \times (\tilde T, 4 \tilde T): v(x,t) \leq k_H \big \}\big | \\
		&\leq C |B(x_0,4r) \times (\tilde T, 4 \tilde T)|^{1/p} \bigg ( \int_{\tilde T}^{4 \tilde T} \int_{B(x_0,3r)} \chi_{\{ k_{j+1} < v < k_j \}} d\mu dt\bigg )^{\frac{p-1}{p}}.
	\end{align*}
	Taking the power $p/(p-1)$ on both sides, summing over $j=0,\ldots, H-1$, and using \eqref{eq_D}, we see that
	\begin{equation*}
		\big |\big \{(x,t) \in B(x_0,3r) \times (\tilde T, 4 \tilde T): v(x,t) \leq k_H \big \}\big | \leq \frac{C}{H^{(p-1)/p}} |B(x_0,3r) \times (\tilde T, 4 \tilde T)|.
	\end{equation*}
	We now let $H$ be the smallest integer larger than $\left ( {C}/{\nu^\ast} \right )^{\frac{p}{p-1}}$. This choice of $H$ completes the proof.
\end{proof}

\begin{lemma}\label{lem_preexpofpos} Let $u,r, \hat T, M, \delta, C_1$ and $t^\ast$ be as in Lemma \ref{lem_3.3}. There exist constants $\hat C \geq 1$ and $\mu^\ast \in (0,1)$ depending only on $\data,\allowbreak\delta$ such that if $\hat C M^{2-p} r^p < \hat T$, then
	\begin{equation*}
		\inf_{Q} u \geq M \mu^\ast,
	\end{equation*}
	where $Q = B(x_0,2r) \times (\hat C M^{2-p}r^p/2, \hat C M^{2-p}r^p)$.
\end{lemma}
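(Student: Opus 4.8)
The plan is to combine Lemma \ref{lem_smallness} with a De Giorgi iteration run on the super-solution $v$ from Lemma \ref{lem_combo}, in order to upgrade the smallness-of-the-bad-set statement into a pointwise lower bound, and then translate this back through the change of variables $\Lambda$ to obtain the desired bound on $u$. First I would fix the free parameter $\nu^\ast$ in Lemma \ref{lem_smallness} to be the critical threshold $\nu^\ast = \nu^\ast(\data,\delta)$ that makes a standard De Giorgi lemma (applied to the sub-solution $(v-k)_-$ on the parabolic cylinder $B(x_0,3r)\times(\tilde T,4\tilde T)$) force the bad set to have measure zero. Concretely, for levels $k_j$ decreasing to some fixed level $\ell>0$, one uses the Caccioppoli inequality (Remark \ref{lem3.2rem} with $\xi=1$ applied to $(v-k_j)_-$), together with the parabolic Sobolev inequality Lemma \ref{cor:3.1}, to show that the sequence $Y_j$ of normalized measures of $\{v \le k_j\}$ on a shrinking family of sub-cylinders satisfies a recursive inequality of the form $Y_{j+1} \le C b^j Y_j^{1+\alpha}$ for some $\alpha>0$, $b>1$; by the fast-geometric-convergence lemma, if $Y_0 = |\{v \le 2^{-H}\}\cap (B(x_0,3r)\times(\tilde T,4\tilde T))|/|B(x_0,3r)\times(\tilde T,4\tilde T)| \le \nu^\ast$, then $Y_j \to 0$, i.e. $v \ge \ell$ a.e. on a smaller cylinder, say $B(x_0,2r)\times(2\tilde T,4\tilde T)$.

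Next I would unwind the definitions. From Lemma \ref{lem_combo}, $v(x,t) = \kappa M^{-1}\exp(\kappa^{p-1}r^{-p}t)\,u(x,\Lambda^{-1}(t))$, so $v \ge \ell$ on $B(x_0,2r)\times(2\tilde T,4\tilde T)$ gives $u(x,s) \ge \ell \kappa^{-1} M \exp(-\kappa^{p-1}r^{-p}\Lambda(s))$ for $x\in B(x_0,2r)$ and $s$ in the time interval $\Lambda^{-1}((2\tilde T,4\tilde T))$. Since $\Lambda(t) \le \Lambda(\hat T) = O(r^p/\kappa^{p-1})$ on the relevant range — more precisely the exponential factor $\exp(\kappa^{p-1}r^{-p}\Lambda(s))$ is comparable to the factor already appearing, and both $\tilde T$ and the resulting time window are comparable to $r^p\kappa^{2-p}C_1^{-1}2^{H(p-2)}$, which is $\asymp M^{2-p}r^p$ up to a constant depending on $\data,\delta$ — one reads off $u \ge \mu^\ast M$ on $Q = B(x_0,2r)\times(\hat C M^{2-p}r^p/2, \hat C M^{2-p}r^p)$ for suitable $\hat C = \hat C(\data,\delta)$ and $\mu^\ast = \mu^\ast(\data,\delta) \in (0,1)$. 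The hypothesis $\hat C M^{2-p}r^p < \hat T$ is exactly what is needed to ensure $\tilde T < \Lambda(\hat T)/4$, so that Lemma \ref{lem_smallness} is applicable with this choice of $H$, and to ensure the De Giorgi cylinder sits inside the domain of the super-solution.

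The main obstacle I anticipate is bookkeeping the interplay between the change of variables $\Lambda$ and the space-time scaling: one must check that the time interval $\Lambda^{-1}((2\tilde T,4\tilde T))$ in the original variable $s$ is genuinely of the form $(\hat C M^{2-p}r^p/2,\hat C M^{2-p}r^p)$ up to constants, which requires estimating $\Lambda^{-1}$ (an exponential) on this range and checking that the logarithmic compression does not destroy the comparability — here the fact that $\tilde T \asymp r^p\kappa^{2-p}C_1^{-1}2^{H(p-2)}$ with $H$ depending only on $\data,\delta$, together with $\Lambda^{-1}(t) = (\kappa(p-2)M^{p-2}r^{-p})^{-1}(\exp(\kappa^{p-1}(p-2)r^{-p}t)-1)$, must be massaged so all constants depend only on $\data,\delta$ and remain controlled as $p\to 2$. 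The De Giorgi iteration itself is by now routine given the Caccioppoli and Sobolev lemmas already in place, so the real work is this final change-of-variables estimate; everything else follows the now-standard template.
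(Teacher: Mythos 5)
Your proposal matches the paper's own argument: Lemma \ref{lem_smallness} with a critical choice of $\nu^\ast$, a De Giorgi iteration for $(v-k_j)_-$ on shrinking sub-cylinders of $B(x_0,3r)\times(\tilde T,4\tilde T)$ built from the Caccioppoli estimate (Remark \ref{lem3.2rem}) and Lemma \ref{cor:3.1}, fast geometric convergence to get $v\geq 2^{-H-1}$ on $B(x_0,2r)\times(2\tilde T,4\tilde T)$, and then unwinding through $\Lambda$ with $\hat C M^{2-p}r^p \equiv \Lambda^{-1}(4\tilde T)$, exactly as in the paper (which additionally performs a cosmetic rescaling of the time variable before iterating). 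The bookkeeping you flag as the main obstacle is handled the same way there, so your plan is essentially the paper's proof.
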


\begin{proof}
	By Lemma \ref{lem_smallness} we find for every $\nu^\ast$ a constant $H$ such that
	\begin{equation} \label{eqk0}
		\big |\big \{ (x,t) \in B(x_0,3r)\times (\tilde T,4 \tilde T) : v(x,t) \leq 2^{-H} \big \}\big | \leq \nu^\ast |B(x_0,3r)\times (\tilde T, 4 \tilde T)|,
	\end{equation}
	where $\tilde T = \frac{r^p}{\kappa^{p-2}C_1} 2^{1+H(p-2)}$. We define
	\begin{equation*}
		k_j	= 2^{-H-1}(1+2^{-j}), \quad r_j = (2+2^{-j})r,\quad \hat T_j = 2 \tilde T(1-2^{-(j+1)p}),
	\end{equation*}
	for $j = 0,1,2,\ldots$, and construct the cylinders
	\begin{equation*}
		Q_j = B_j \times \Gamma_j = B(x_0,r_j) \times (\hat T_j, 4 \tilde T).
	\end{equation*}
	The sequence $k_j$ satisfies
	\begin{equation*}
		k_j - k_{j+1} = 2^{-H - j - 2} \quad \text{and} \quad 2^{-H-1} \leq k_j \leq 2^{-H}.
	\end{equation*}
Let $\phi_j = \psi_j \zeta_j$, where $\psi_j \in W_{\X,0}^{1,\infty} (B_j)$ is the function from Lemma \ref{lem_2.1} such that $\psi_j = 1$ in $B_{j+1}$, and $\zeta_j \in C_0^\infty(0,\Lambda(\hat T))$ which vanishes at $\hat T_j$, $\zeta_j = 1$ in $\Gamma_{j+1}$, $0 \leq \zeta \leq 1$, and
	\begin{equation*}
		|\X \phi_j| \leq C \frac{2^j}{r}, \quad (\phi_t)_+ \leq C \frac{2^{jp}}{\tilde T} \leq C 2^{jp} k_j^{p-2} \frac{\kappa^{p-2}C_1}{r^p} = C \frac{2^{jp} k_j^{p-2}}{r^p}.
	\end{equation*}
	Note that
	\begin{equation*}
		(v-k_j)_{-}^2 \geq \frac{(v-k_j)_{-}^p}{k_j^{p-2}}.
	\end{equation*}
	Then from Remark \ref{lem3.2rem}
	\begin{align*}
		\int_{Q_j} &|\X (v-k_j)_{-}|^p \phi_j^p d\mu dt + k_j^{2-p} \sup_{\Gamma_j} \int_{B_j} (v-k_j)_{-}^p \phi_j^p d\mu \\
		&\leq
		C \int_{Q_j} (v-k_j)_{-}^p |\X \phi_j|^p d\mu dt + C \int_{Q_j} (v-k_j)_{-}^2( (\phi_j^p)_t)_{+} d\mu dt \\
		&\leq
		C \frac{2^{jp}}{r^p} \bigg (\int_{Q_j} (v-k_j)_{-}^p d\mu dt
		+ k_j^{p-2} \int_{Q_j}(v-k_j)_{-}^2 d\mu dt \bigg ).
	\end{align*}
	We now change variables in time as we let $z = \frac{t \kappa^{p-2} C_1}{2^{1+H(p-2)}}$. Then
	\begin{align}\label{eq_00+}
		\int_{Q^z_j} &|\X (w-k_j)_{-}|^p \phi_j^p d\mu dz + C \sup_{\Gamma^z_j} \int_{B_j} (w-k_j)_{-}^p \phi^p d\mu \notag\\
		&\leq
		C \frac{2^{jp}}{r^p} \bigg (\int_{Q^z_j} (w-k_j)_{-}^p d\mu dz
		+ k_j^{p-2} \int_{Q^z_j}(w-k_j)_{-}^2 d\mu dz \bigg ),
	\end{align}
	where $Q^z_j = B_j \times \Gamma^z_j = B_j \times (2(1-2^{-(j+1)p})r^p, 4r^p)$, $w(x,z) = u(x,z \tilde T)$.
	Let
	\begin{equation*}
		A_j = \int_{Q^z_j} \chi_{\{ w < k_j \}} d\mu dz.
	\end{equation*}
	Notice that at the first level we have, see \eqref{eqk0}, that
	$A_0 \leq 9 \nu^\ast r^p |B(x_0,3r)|$. Using Lemma \ref{cor:3.1} and \eqref{eq_00+} we see that
	\begin{eqnarray} \label{lem3.8eq1}
		\int_{\Gamma^z_j} \fint_{B_j} (w-k_j)_{-}^p \psi_j^p d\mu dt &\leq&
		C r^{p \frac{\kappa^\ast}{2\kappa^\ast-1}}\bigg ( \frac{A_j}{|B_j|} \bigg )^{\frac{\kappa^\ast-1}{2 \kappa^\ast-1}} \frac{1}{|B_j|} E_1 \notag\\
		&\leq&
		 C r^{p \frac{\kappa^\ast}{2 \kappa^\ast - 1} - p}\bigg ( \frac{A_j}{|B_j|} \bigg )^{\frac{\kappa^\ast-1}{2 \kappa^\ast-1}} \frac{1}{|B_j|} E_2 \notag\\
		&\leq& C 2^{jp} r^{p \frac{2 \kappa^\ast - 1}{\kappa^\ast}-1} k_j^p \bigg ( \frac{A_j}{|B_j|} \bigg )^{1+\frac{\kappa^\ast-1}{2 \kappa^\ast-1}}.
	\end{eqnarray}
where 	\begin{eqnarray*}
	E_1& \equiv & \bigg ( \int_{Q^z_j} |\X (w-k_j)_{-}|^p \phi_j^p d\mu dz + \sup_{\Gamma^z_j} \int_{B_j} (w-k_j)_{-}^p \phi^p d\mu \bigg ),\\
E_2& \equiv & 2^{jp} \bigg ( \int_{Q^z_j} (w-k_j)_{-}^p d\mu dz
		+ k_j^{p-2} \int_{Q^z_j}(w-k_j)_{-}^2 d\mu dz \bigg ).
	\end{eqnarray*}
	Furthermore,
	\begin{eqnarray} \label{lem3.8eq2}
		\int_{\Gamma^z_j} \fint_{B_j} (w-k_j)_{-}^p \psi_j^p d\mu dt
		&\geq&
		\frac{1}{C_D}\int_{\Gamma^z_{j+1}} \fint_{B_{j+1}} (w-k_j)_{-}^p d\mu dt \notag\\
& \geq& \frac{1}{C_D} (k_j - k_{j+1})^p \frac{A_{j+1}}{|B_{j+1}|} \notag \\
		&\geq&
		\frac{k_j^p}{C 2^{j p}} \frac{A_{j+1}}{|B_{j+1}|}.
	\end{eqnarray}
Using \eqref{lem3.8eq1} and \eqref{lem3.8eq2} we get the inequality
	\begin{equation} \label{eq_00it}
		\frac{A_{j+1}}{|B_{j+1}|} \leq C 4^{j p} r^{p \frac{\kappa^\ast}{2 \kappa^\ast - 1}-p} \bigg ( \frac{A_j}{|B_j|} \bigg )^{1+\frac{\kappa^\ast-1}{2 \kappa^\ast-1}}
	\end{equation}
for $j=0,1,...$.

\noindent Defining $Y_j = A_{j}/(r^p |B_j|)$ we obtain \eqref{eq_00it} in a dimensionless form
	\begin{equation*}
		Y_{j+1} \leq C 4^{j p} Y_{j}^{1+\frac{\kappa^\ast-1}{2 \kappa^\ast-1}}.
	\end{equation*}
	By fast geometric convergence (Lemma 4.1, \cite{DiBenedetto}),
	\begin{equation} \label{eq_iter:bdd}
		A_j \to 0 \quad \text{if} \quad Y_0 \leq C^{-\frac{2 \kappa^\ast-1}{\kappa^\ast-1}}4^{-p\big [\frac{2 \kappa^\ast-1}{\kappa^\ast-1}\big ]^2}.
	\end{equation}
	To satisfy \eqref{eq_iter:bdd} we can choose $\nu^\ast$ small enough, i.e. we choose $\nu^\ast$ as
	\begin{equation*}
		\nu^\ast \leq C^{-\frac{2 \kappa^\ast-1}{\kappa^\ast-1}}4^{-p\big [\frac{2 \kappa^\ast-1}{\kappa^\ast-1}\big ]^2}.
	\end{equation*}
	Then
	\begin{equation} \label{lem3.8vfin}
		v(x,t) \geq 2^{- H - 1},
	\end{equation}
	for almost every $(x,t) \in B_{\infty} \times \Gamma_\infty^z$. Going back to $u$, \eqref{lem3.8vfin} implies
	\begin{equation} \label{lem3.8fin}
		u(x,\Lambda^{-1}(t)) \geq C M \frac{1}{\kappa \exp(\kappa^{p-1}\frac{1}{r^p}4 \tilde T)} \equiv
		\mu^\ast M,
	\end{equation}
	for almost every $t \in (2 \tilde T, 4 \tilde T)$.
	Define
	\begin{equation*}
		\hat C M^{2-p} r^p \equiv \Lambda^{-1}(4 \tilde T) = \frac{\exp(\kappa^{p-1}\frac{1}{r^p} C r^p)-1}{\kappa^{p-1}\frac{1}{r^p} M^{p-2}},
	\end{equation*}
	then we see that \eqref{lem3.8fin} implies the conclusion of the lemma. Moreover note that all constants are stable as $p \to 2$.
\end{proof}

\subsection{Proof of Lemma \ref{lem_expofpos}}
	Without loss of generality we may assume $t_0 = 0$ and, as in the statement of Lemma \ref{lem_expofpos},
	\begin{equation*}
		\big \{x \in B(x_0,r): u(x,0) > M \big \} \geq \delta |B(x_0,r)|.
	\end{equation*}
	Then, applying Lemma \ref{lem_preexpofpos} we first obtain that
	\begin{equation*}
		u(x,t) \geq \mu^\ast M,
	\end{equation*}
whenever $x \in B(x_0,2r)$ and for all Lebesgue instants $t$ for $u$ such that
\begin{equation*}
	t \in (\hat C M^{2-p} r^p/2,\hat C M^{2-p} r^p),
\end{equation*}
provided $\hat C M^{2-p}r^{p} < T_0$.
So in order to obtain the estimate from below in $B(x_0,r_0)$ we need to iterate Lemma \ref{lem_preexpofpos}, $\gamma = \log_2(r_0/r)$ times. Assume, without loss of generality, that $\gamma$ is an integer. Let
	\begin{align*}
		T^\ast_1 &\equiv
		\hat C M^{2-p}r^p + \hat C (\mu^\ast)^{2-p}2^p M^{2-p} r^p + \ldots +
		\hat C (\mu^\ast)^{(2-p)(\gamma-1)} 2^{p(\gamma-1)} M^{2-p} r^p.
	\end{align*}
	Using the definition of $\gamma$ we see that instead of $T^\ast_1$ we can take
	\begin{equation*}
		T_1 \equiv
		C \left ( \frac{r}{r_0} \right )^{\theta (2-p)}  M^{2-p} r_0^p.
	\end{equation*}
	Assume now that $T_1 < T_0$.
	 Hence if we at each step use Lemma \ref{lem_preexpofpos}, and choose a Lebesgue instant for $u$ in the allowed interval, we will end up with a
Lebesgue instant $t$ for $u$ satisfying
	\begin{equation*}
		T_1/2 < t < T_1.
	\end{equation*}
	At $t$ we have
	\begin{equation*}
		u(x,t) \geq (\mu^\ast)^{\gamma} M = \left ( \frac{r}{r_0} \right )^{\theta} M,
	\end{equation*}
	for all $(x,t) \in B(x_0,r_0) \times (T_1/2, T_1)$ and this completes the proof of Lemma \ref{lem_expofpos}.

\subsection{A reverse H\"older estimate for super-solutions}
\begin{lemma}\label{lem_5.3}
	Assume that $\A$ satisfies the structure conditions \eqref{admissiblesym}. Let $B(x_0,8r) \Subset \Omega$ and $0 < r < R$. Suppose that $u \geq 1$ is a weak super-solution to \eqref{eq_theeq} in an open set containing $\overline{B(x_0, 8r)} \times [0, 2^pr^p].$ Let $\gamma=1+(\kappa^\ast-1)/\kappa^\ast$
 and let $G$ be defined by the relation $\gamma= 1+1/G$. Given $ q\in (p-2 , p-2+ \gamma)$ and $s= p-2 + \gamma^{-l}(q-p+2)$, for some $l\in\{1,2,....\}$, there exists a
constant $C = C(\data ,\allowbreak q,\allowbreak s) \geq 1$ such that
\begin{align*}
	\Bigg( \frac{1}{2r^p} \int_0^{r^p} \fint_{B(0, \rho r)} &u^{q} d\mu dt\Bigg)^{1/(q-p+2)}\\
&\leq C \Bigg(
	 \frac{1}{ (2-\rho)^{pG+p} }
	 \frac{1}{r^p} \int_0^{2^pr^p} \fint_{B(0, 2r)} u^{s} d\mu dt\Bigg)^{1/(s-p+2)},	
\end{align*}
for all $1<\rho<2$.
\end{lemma}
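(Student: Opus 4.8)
The statement is a reverse Hölder inequality for super-solutions $u\geq 1$, allowing one to bootstrap integrability from a low exponent $s$ to a higher exponent $q$, with a controlled power of $(2-\rho)^{-1}$ tracking the shrinking of the ball. The natural strategy is an iterative Moser-type scheme on a sequence of exponents interpolating between $s$ and $q$ through the relation $s = p-2+\gamma^{-l}(q-p+2)$: since $\gamma = 1 + (\kappa^\ast-1)/\kappa^\ast > 1$, define exponents $q_j$ by $q_j - p + 2 = \gamma^{-j}(q-p+2)$ for $j=0,1,\ldots,l$, so $q_0 = q$ and $q_l = s$. One sets up one step of the iteration passing from $q_{j+1}$ to $q_j$ (i.e., increasing the exponent by a factor $\gamma$ at the level of the shifted exponents), on a nested family of cylinders $B(x_0,\rho_j r) \times (0, t_j r^p)$ interpolating between $B(x_0,\rho r)$ and $B(x_0,2r)$ (and between the time intervals $(0,r^p)$ and $(0,2^p r^p)$), then composes the $l$ steps and sums the resulting geometric series in the exponents.

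\textbf{The single iteration step.} For the one-step estimate I would use the Caccioppoli inequality Lemma \ref{cor_cacc} applied to $u$ with $\xi$ chosen so that $1+\xi$ equals (a multiple of) the target shifted exponent — note that for a super-solution with $u\geq 1$ and $\xi<0$ the lemma is available, and the relevant negative-power estimates control $\int |\X u|^p u^{\xi-1}$; equivalently one works with $v = u^{\alpha}$ for a suitable $\alpha$ so that $|\X v|^p = \alpha^p u^{\alpha p - p}|\X u|^p$, turning the Caccioppoli bound into an $L^p$ bound on $\X(v\phi)$ in terms of $\int v^p$ over the larger cylinder, with the cutoff costs producing the $(2-\rho)^{-p}$ and $r^{-p}$ factors and the $|\phi_t|$ term producing a factor $k_j^{p-2}$-type time weight. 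Feeding this into the parabolic Sobolev inequality Lemma \ref{lem_parabolic:sobolev} (which is exactly where the exponent $\gamma = 1 + (\kappa^\ast-1)/\kappa^\ast$ and hence $G$ with $\gamma = 1+1/G$ enters, via $\kappa^\ast$) upgrades the $L^p$ gradient control together with the $\sup_t L^{(\cdot)}$ control into an $L^{\kappa p}$ bound, i.e. a gain of the factor $\gamma$ at the level of shifted exponents, at the price of a power of the cutoff constant that is a fixed power (around $pG$) of $(2-\rho_{j+1}/\rho_j)^{-1}$.

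\textbf{Composing and the main difficulty.} Iterating this step $l$ times from exponent $q_l = s$ down to $q_0 = q$ and choosing the radii $\rho_j$ to shrink geometrically from $2$ to $\rho$ (e.g. $\rho_j = \rho + (2-\rho)2^{-j}$ or similar), the cutoff losses at step $j$ are bounded by $C\, 2^{j\cdot pG}(2-\rho)^{-pG}$; after taking the appropriate roots dictated by the shifted exponents (the exponent at step $j$ on the cutoff loss is $1/(q_j - p+2) = \gamma^j/(q-p+2)$, which grows geometrically, but the $2^{jpG}$ also only grows geometrically) one must check that the composite exponent on $(2-\rho)^{-1}$ telescopes to the stated $pG+p$ and that the composite constant $C$ remains finite depending only on $\data, q, s$ (and not on $l$ — here one uses that $l$ is determined by $q$ and $s$ through the given relation, so "$C$ depends on $q,s$" legitimately absorbs $l$). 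The main obstacle is precisely this bookkeeping: ensuring that when the many one-step inequalities are raised to their respective powers and chained, the product of constants converges and the final power of $(2-\rho)^{-1}$ is exactly $pG+p$ rather than something that blows up with $l$; this requires choosing the intermediate radii and time-levels with the right geometric rate so that the geometric growth of the per-step exponents is exactly compensated by the geometric decay coming from the $2^{-j}$ weights in the cutoffs. A secondary technical point is the time direction: since for $p>2$ time and space scale together, the interpolation in the time interval from $(0,r^p)$ up to $(0,2^pr^p)$ must be coordinated with the spatial shrinking, and the $|\phi_t|$ terms must be estimated using $u\geq 1$ so that lower powers of $u$ are dominated by higher ones — this is routine but is where the hypothesis $u\geq 1$ is essential.
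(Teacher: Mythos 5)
Your proposal is correct and follows essentially the same route as the paper: a finite Moser-type iteration over the exponents $\alpha_j=p-2+(q-p+2)\gamma^{j-l}$ on nested cylinders shrinking from $B(x_0,2r)\times(0,2^pr^p)$ to $B(x_0,\rho r)\times(0,\mathcal R_l^p)$, where each step combines the Caccioppoli estimate of Lemma \ref{cor_cacc} with $\xi=\alpha_j-p+1\le 0$ (equivalently $1+\xi$ equal to the shifted exponent) and the parabolic Sobolev inequality of Lemma \ref{lem_parabolic:sobolev}, using $u\ge 1$ to absorb the time-derivative terms and cut-offs with $|\X\varphi_j|\le C2^j/((2-\rho)r)$ so that the composed constants yield the $(2-\rho)^{-(pG+p)}$ factor and a constant depending only on $\data,q,s$. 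The bookkeeping you flag is exactly what the paper handles before invoking the "straightforward iteration argument as in \cite{K}."
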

\begin{proof} In this proof $C$ denotes a constant such that $C = C(\data,\allowbreak q, \allowbreak s)\geq 1$.
	We let \begin{eqnarray*}
\alpha_j&=&p-2 +{(q-p+2) \gamma^{j-l}},\\
\mathcal R_j &=& \Big(2- (2-\rho)\frac{1-2^{-j}}{1-2^{-l}}\Big)r,
\end{eqnarray*}
for $j=0,...,l$. Note that $\alpha_0=s$, $\alpha_l=q$, that $\{\alpha_j\}$ is increasing with $j$ and that $\{\mathcal R_j\}$ is decreasing with $j$. Hence, to prove the lemma it suffices to establish the estimate
\begin{equation*}
	\frac{1}{r^p} \int_0^
	{2r^p} \fint_{B(x_0, \mathcal R_{l})} u^{\alpha_l} d\mu dt\leq \Bigg(\frac{C}{(2-\rho)^{pG+p}}\frac{1}{r^p}\int_0^{2^pr^p} \fint_{B(x_0, 2r)} u^{\alpha_0} d\mu dt\Bigg)^{\gamma^l}.
\end{equation*}
 For each $j=0, \ldots,l$, set
\begin{equation*}
	U_j= B(x_0,\mathcal R_j)\times (0, \mathcal R^p_j) \text{ and } B_j=B(x_0,\mathcal R_j),
\end{equation*}
and let $\psi_j \in W_{\X,0}^{1,\infty}(B_j)$ be a test function as in Lemma \ref{lem_2.1} such that $\psi_j = 1$ in $B_{j+1}$ and $\psi_j = 0$ on $\partial B_{j}$. Let $\zeta_j \in C^{\infty}(0,\mathcal R_{j}^p)$ such that $\zeta_j(\mathcal R_j^p) = 0$, $\zeta_j = 1$ on $(0,\mathcal R_{j+1}]$ and
\begin{equation*}
	\Big|\frac{\partial \zeta_j}{\partial t}\Big| \leq C \frac{2^{pj}}{(2-\rho)^pr^p}.
\end{equation*}
Set $\varphi_j = \phi_j \zeta_j$, then
\begin{equation*}
		|\X \varphi_j| \leq C \frac{2^j}{(2-\rho) r},
	\end{equation*}
for $j=0,1, \ldots, l$.
Denote \begin{equation*}
\kappa_j =\frac{\alpha_{j+1}}{\alpha_j}, \qquad \beta_j = \frac{p\alpha_j}{\alpha_j -p+2}.
	\end{equation*}
Using \eqref{eq_D}, Lemma \ref{lem_parabolic:sobolev} and the fact that $1 < \rho < 2$ we obtain
\begin{eqnarray}\label{eq_5.4}
\fint_{U_{j+1}}u^{\alpha_{j+1}}d\mu dt &=& \fint_{U_{j+1}}u^{\kappa_j\alpha_j}d\mu dt\notag\\
& \leq&
%C\frac{\mathcal R_j^p|B(x_0,\mathcal R_j)|}{\mathcal R_{j+1}^p|B(x_0,\mathcal R_{j+1})|}\fint_{U_{j}}\big(u^{\alpha_j/p} \varphi_j^{\beta_j/p}\big)^{\kappa_j p} d\mu dt \notag\\
C \fint_{U_{j}}\big(u^{\alpha_j/p} \varphi_j^{\beta_j/p}\big)^{\kappa_j p} d\mu dt \notag\\
&\leq& C \mathcal R_j^p\biggl ( \fint_{U_j} |\X (u^{\alpha_j/p} \varphi_j^{\beta_j/p})|^p d\mu dt\biggr )\notag\\
 &&\times\Big(\sup_{0<t<\mathcal R^p_j}\fint_{B_j} \big(u^{\alpha_j/p} \varphi_j^{\beta_j/p}\big)^{p (\kappa_j-1) G}d\mu \Big)^{1/G},
	\end{eqnarray}
	where $G = \frac{\kappa^\ast}{\kappa^\ast-1}$.
 Note that $\alpha_j(\kappa_j-1)G = \alpha_j -p +2$
 and $\beta_j(\kappa_j-1)G = p$. In view of this observation we can invoke Lemma \ref{cor_cacc}
 and apply it to the right hand side of \eqref{eq_5.4}, with\footnote{Note that $\xi\le 0$ for $j=0,...,l-1$}
 $\xi=\alpha_j -p +1$. This yields together with the fact that $u \geq 1$,
 \begin{align}\label{gga-}
	\sup_{0<t<\mathcal R^p_j}&\int_{B_j} \big(u^{\alpha_j/p} \varphi_j^{\beta_j/p}\big)^{p(\kappa_j-1)G} =
	\sup_{0<t<\mathcal R^p_j}\int_{B_j} u^{ \alpha_j -p +2} \varphi_j^p d\mu\notag \\
	%&\leq C \Bigg[\bigg(\frac{|\alpha_j-p+2|}{|\alpha_j-p+1|^{p}}\bigg)
	%\int_{U_j}u^{\alpha_j} |\X \varphi_j|^p d\mu dt + \int_{U_j}u^{\alpha_j -p +2}\Big|\frac{\partial \varphi_j^p}{\partial t}\Big|d\mu dt\Big]\notag \\
	&\leq C \bigg [ \int_{U_j}u^{\alpha_j} |\X \varphi_j|^p d\mu dt + \int_{U_j}u^{\alpha_j -p +2}\Big|\frac{\partial \varphi_j^p}{\partial t}\Big|d\mu dt\Big]\notag \\
	&\le C \Bigg(\frac{2^{-j}}{(2-\rho)r}\Bigg)^p \int_{U_j} u^{\alpha_j} d\mu dt.
\end{align}
Next, note that
\begin{eqnarray*}
	\int_{U_j}|\X (u^{\alpha_j/p} \varphi^{\beta_j/p}_j)|^p d\mu dt&\leq& \Big(\frac{\alpha_j}{p}\Big)^p \int_{U_j}|\X u|^p u^{\alpha_j - p}\varphi_j^p d\mu dt\notag\\
&& + \Big(\frac{\beta_j}{p}\Big)^p \int_{U_j}u^{\alpha_j}\varphi^{\beta_j}|\X \varphi_j|^p d\mu dt.
\end{eqnarray*}
Invoking once more Lemma \ref{cor_cacc} with $\xi = \alpha_j-p+1$ we obtain
\begin{align*}
 \int_{U_j}&|\X u|^p u^{\alpha_j - p}\varphi_j^p d\mu dt\notag
% &\leq C\Biggl ( \frac {1}{|\alpha_j -p+1|^p} \int_{U_j}u^{\alpha_j}|\X \varphi_j|^p d\mu dt\biggr )\notag\\
% &+C\Biggl (\frac{1}{|(\alpha_j-p+1)(\alpha_j-p+2)|} \int_{U_j}u^{\alpha_j-p+2}\Big|\frac{\partial\varphi^p_j}{\partial t}\Big|d\mu dt\biggr ).
	\leq C\Biggl ( \int_{U_j}u^{\alpha_j}|\X \varphi_j|^p d\mu dt
	+\int_{U_j}u^{\alpha_j-p+2}\Big|\frac{\partial\varphi^p_j}{\partial t}\Big|d\mu dt\biggr ).
\end{align*}
Putting the together the estimates in the last two display we can conclude that
\begin{align}\label{gga}
	\int_{U_j}\big |\X \big (u^{\alpha_j/p} \varphi^{\beta_j/p}_j\big )\big |^p d\mu dt\leq C \Bigg(\frac{2^{-j}}{(2-\rho)r}\Bigg)^p \int_{U_j} u^{\alpha_j} d\mu dt.
\end{align}
Substituting the estimates in \eqref{gga-} and \eqref{gga} in \eqref{eq_5.4} yields
\begin{align*}
	\int_{U_{j+1}}u^{\alpha_{j+1}} d\mu dt
&\le C \Bigg[\mathcal R_j^p \Bigg(\frac{2^{-j}}{(2-\rho)r}\Bigg)^p \fint_{U_j} u^{\alpha_j}|\X \varphi_j|^p d\mu dt\Bigg]^\gamma \\
&\leq C \Bigg[\Bigg(\frac{2^{-j}}{(2-\rho)}\Bigg)^p \fint_{U_j} u^{\alpha_j}|\X \varphi_j|^p d\mu dt\Bigg]^\gamma.
\end{align*}
The proof now follows from a straightforward iteration argument as in \cite{K}. \end{proof}

\section{Proof of Theorem \ref{theorem1.2a}}
\noindent
%In this section we will assume that we have a $p$-admissible structure $(\M,\mu,d)$ as in Definition \ref{admissible}. Throughout the section we will assume that $\Omega$ is a domain such that the closure of $\Omega$, $K=\bar\Omega$, is compact. The constants $C_D, C_P, N, R$ of Definition \ref{admissible} and Lemma \ref{lem_2.1} will hence depend on $K$.

\subsection{The Hot Alternative}

\begin{lemma}\label{hot} Assume that $\A$ satisfies the structure conditions \eqref{admissiblesym}. There exist constants $\sigma, \theta_h \in (0,1)$, and $T_h \geq 1$ all depending only on $\data$, such that the following holds. Let $B(x_0,8{\hat r}) \Subset \Omega$, $0 < 8{\hat r} < R$, and let $u$ be a super-solution to \eqref{eq_theeq} in an open set containing $\overline{B(x_0,8{\hat r})} \times [0, 2{\hat r}^p T_h]$. If for some $k > 8^{1/\sigma}$, and some Lebesgue instant $t_0$ for $u$ satisfying $0 < t_0 < 2^p {\hat r}^p$, one has
	\begin{equation*}
		|\{x \in B(x_0,2{\hat r}) : u(x,t_0) > 8 k^{1+\sigma} \}| > 8 k^{-\sigma} |B(x_0,2{\hat r})|,
	\end{equation*}
	then
	\begin{equation*}
		\inf_{B(x_0,2{\hat r}) \times ({\hat r}^p T_h, 2 {\hat r}^p T_h)} u \geq \theta_h.
	\end{equation*}
\end{lemma}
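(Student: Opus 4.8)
The idea is to convert the measure hypothesis at the instant $t_0$ into the hypothesis of the expansion-of-positivity Lemma~\ref{lem_expofpos}, using Lemma~\ref{lem_3.3} (which bundles the measure propagation of Lemma~\ref{lem_positivity} with the construction of a Sobolev test function carrying an a priori energy bound) and then the clustering Lemma~\ref{lem_local_clust}. Concretely, I would first translate $t_0$ to the origin and apply Lemma~\ref{lem_3.3} on $B(x_0,2\hat r)$ with $M=8k^{1+\sigma}$ and $\delta=8k^{-\sigma}$: the standing assumption $k>8^{1/\sigma}$ is exactly what guarantees $\delta<1$, and the largeness requirement $\hat T>r^p/(C_1M^{p-2})$ holds once $T_h$ is large, because $8k^{1+\sigma}\ge1$. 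This yields a Lebesgue instant $t^\ast$ with $t^\ast\le C\hat r^p$, a function $w\in W^{1,p}_{\X,0}(B(x_0,4\hat r))$ with $0\le w\le1$ and $\fint_{B(x_0,4\hat r)}|\X w|^p\,d\mu\le C_2/(2\hat r)^p$, equal to $1$ on $\{x\in B(x_0,2\hat r):u(x,t^\ast)\ge8k\}$ and to $0$ on $\{x\in B(x_0,4\hat r):u(x,t^\ast)\le4k\}$, together with $|\{x\in B(x_0,2\hat r):u(x,t^\ast)>8k\}|\ge k^{-\sigma}|B(x_0,2\hat r)|$.

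\textbf{Localizing and expanding in space.} Next I would apply Lemma~\ref{lem_local_clust} to $2w$, restricted to $B(x_0,2\hat r)$, with $\alpha=k^{-\sigma}$ and $\hat\gamma\simeq(C_2C_D)^{1/p}$; the decisive feature is that $\hat\gamma$ is a structural constant independent of $k$, so the admissible radius fraction takes the form $\hat\epsilon=c^{-1}k^{-\sigma\Theta}$ with $\Theta=\max\!\big(\tfrac{\kappa^\ast+1}{\kappa^\ast p},\tfrac1{\hat\delta}\big)$ depending only on the structure. Choosing the target parameters $\delta=\lambda=\tfrac12$ produces a point $y\in B(x_0,2\hat r)$ and a radius $\rho\simeq k^{-\sigma\Theta}\hat r$ with $|\{x\in B(y,\rho):u(x,t^\ast)>4k\}|>\tfrac12|B(y,\rho)|$, since $\{2w>\tfrac12\}\subset\{u(\cdot,t^\ast)>4k\}$. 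I would then invoke Lemma~\ref{lem_expofpos} at $(y,t^\ast)$ with $r=\rho$, $M=4k$, $\delta=\tfrac12$ and outer radius $r_0=\tfrac32\hat r$ --- chosen so that $B(y,4r_0)\subset B(x_0,8\hat r)$ while $B(y,2r_0)=B(y,3\hat r)\supset B(x_0,\hat r)$ --- obtaining $u\ge\ell:=c\,k\,(\rho/\hat r)^{\theta}\simeq c\,k^{1-\sigma\Theta\theta}$ on $B(x_0,\hat r)$, at all Lebesgue instants in a slab of length $\simeq\gamma\ell^{2-p}\hat r^p$ located at time $\le C\hat r^p$. At this stage $\sigma$ is fixed once and for all: small enough, in terms of the structure, that $1-\sigma\Theta\theta>0$ and $c\,8^{(1-\sigma\Theta\theta)/\sigma}\ge1$, which forces $\ell\ge1$ unconditionally and makes the clustering-induced loss of a power of $k$ harmless.

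\textbf{Expanding in time.} Starting from $u\ge\ell$ on $B(x_0,\hat r)$ at some Lebesgue instant $\bar t\le C\hat r^p$, I would iterate Lemma~\ref{lem_expofpos}, always centered at $x_0$, with $r=\hat r$ and $r_0=\tfrac32\hat r$ (so that $B(x_0,4r_0)\subset B(x_0,8\hat r)$ and $B(x_0,2r_0)\supset B(x_0,2\hat r)$), applying it at every Lebesgue instant of the currently available positivity slab. Since the inner/outer ratio is the fixed number $\tfrac23$, the lower bound degrades by a fixed factor $<1$ at each step while the generated slabs overlap; hence their union is a single time-interval whose left endpoint stays $\le C\hat r^p$ and whose right endpoint advances like the current $\gamma\nu^{2-p}\hat r^p$. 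Stopping the iteration when the bound first drops to a level $\theta_h$, and setting $\theta_h=(c\gamma/T_h)^{1/(p-2)}$ with $T_h$ large --- depending only on the structure, so that $\theta_h\in(0,1)$ and all the earlier largeness requirements on $T_h$ are in force --- this interval contains $(\hat r^pT_h,2\hat r^pT_h)$, so that $\inf_{B(x_0,2\hat r)\times(\hat r^pT_h,2\hat r^pT_h)}u\ge\theta_h$.

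\textbf{Where the difficulty lies.} Three points are genuinely delicate. First, the clustering lemma cannot be applied to $u$ directly, since no gradient bound is available on a single time slice; this is the whole reason Lemma~\ref{lem_3.3} is indispensable, as it extracts a good time slice through an energy estimate while simultaneously propagating the measure information. Second, one must arrange that the localization costs only a power of $k$ whose exponent is structural rather than tied to the free parameter $\sigma$; this dictates the particular smallness of $\sigma$ above and is what makes the factor $(\rho/\hat r)^\theta$ harmless. Third --- and this is precisely where $p>2$ is harder than $p=2$ --- Lemma~\ref{lem_expofpos} only delivers positivity on a ``ratio two'' slab $(\tilde T/2,\tilde T)$, and because time and space scales are rigidly coupled one must reach the fixed target cylinder by the overlapping-slab iteration with slow decay described above, all the while checking that the several lower bounds on $T_h$ are mutually compatible.
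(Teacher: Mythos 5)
Your overall architecture is the same as the paper's (propagate the measure information forward in time, extract a time slice with an energy bound, cluster to a small ball with positivity fraction $1/2$, expand positivity, and finally choose $\sigma$ to cancel the power of $k$ lost in the localization). But there is a genuine gap at the decisive quantitative step: you claim that the energy bound fed into Lemma~\ref{lem_local_clust} is structural, i.e.\ that $\hat\gamma\simeq (C_2C_D)^{1/p}$ is independent of $k$. This is false as you have set things up. In Lemma~\ref{lem_3.3} the constants $C_1,C_2$ depend on the positivity fraction $\delta$, and you are applying it with $\delta=8k^{-\sigma}$; the dependence enters through Lemma~\ref{lem_positivity}, whose usable time window is shortened by the factor $\gamma^{p/\hat\delta+1}$, and this shortening is exactly what inflates the gradient bound on the good time slice by a negative power of $\gamma$ (one can see it in the term $1/(\hat T k^{p-2})$ in the proof of Lemma~\ref{lem_3.3}). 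Consequently your exponent $\Theta=\max\big(\tfrac{\kappa^\ast+1}{\kappa^\ast p},\tfrac1{\hat\delta}\big)$ and the constraint ``$1-\sigma\Theta\theta>0$'' defining $\sigma$ are incomplete, and the claimed unconditional bound $\ell\ge1$ does not follow. This is precisely the point emphasized in the remark following the statement of the lemma: the whole difficulty of the hot alternative is to control how the constants degenerate as the positivity fraction tends to zero.

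The repair is what the paper actually does: bypass Lemma~\ref{lem_3.3}, run the Caccioppoli estimate directly on $(u-k^{1+\sigma})_-$ over the cylinder of length $T_1=Ck^{(1+\sigma)(2-p)}\gamma^{p/\hat\delta+1}\hat r^p$ supplied by Lemma~\ref{lem_positivity}, and track the powers of $\gamma=8k^{-\sigma}$ explicitly; this yields a clustering scale $\epsilon\simeq\gamma^{\frac{\kappa^\ast+1}{\kappa^\ast p}+\frac{p+\hat\delta}{\hat\delta p}+1}$, and then $\sigma$ is fixed by $\sigma\big(\tfrac{\kappa^\ast+1}{\kappa^\ast p}+\tfrac{p+\hat\delta}{\hat\delta p}+1\big)\theta=1$ so that $k(2\epsilon)^\theta$ is bounded above and below by structural constants. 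Since the $\delta$-dependence in Lemmas~\ref{lem_positivity} and~\ref{lem_3.3} is in fact power-like, your scheme could be salvaged by reproving Lemma~\ref{lem_3.3} with the explicit power of $\delta$ displayed and enlarging $\Theta$ accordingly, but as written the central claim is unjustified and the proposed choice of $\sigma$ need not work. Your end-game (iterating Lemma~\ref{lem_expofpos} with the fixed ratio $r=\hat r$, $r_0=\tfrac32\hat r$ to reach the fixed cylinder $B(x_0,2\hat r)\times(\hat r^pT_h,2\hat r^pT_h)$, and the more careful domain bookkeeping $B(y,4r_0)\subset B(x_0,8\hat r)$) is reasonable and, if anything, more scrupulous than the paper's one-shot application with $r_0=4\hat r$; that part is not where the problem lies.
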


\begin{remark}
As remarked in \cite{DGVbook}, this result would follow from Lemma \ref{lem_expofpos}, if we could control the dependency of the constants with respect to the amount of positivity, i.e. the constant $\delta$ in Lemma \ref{lem_expofpos}. Instead we will employ Lemma \ref{lem_local_clust} to obtain a scale where we have the amount of positivity independent of the initial amount, with the scale instead depending on this initial amount in a power-like fashion. This allows us to iterate the expansion of positivity from this small initial datum to gain information on a large scale independent of $k$.
\end{remark}

\begin{proof} Let $\gamma = 8k^{-\sigma}$. Using Lemma \ref{lem_positivity} we obtain that
	\begin{equation}\label{eqksigma}
		|\{x \in B(x_0,2{\hat r}) : u(x,t) > 8k \}| > k^{-\sigma} |B(x_0,2{\hat r})|,
	\end{equation}
	holds for a.e. $t\in (t_0,t_0 + C k^{(1+\sigma)(2-p)} \gamma^{p/\hat \delta+1}{\hat r}^p)$. Let
	\begin{equation*}
		Q_1 = B(x_0,2{\hat r}) \times (t_0 + {T_1}/{2},t_0+T_1), \quad
		Q_2 = B(x_0,4{\hat r}) \times (t_0,t_0 + T_1),
	\end{equation*}
	where $T_1 =C k^{(1+\sigma)(2-p)} \gamma^{p/\hat \delta+1}{\hat r}^p$. Let $\psi \in W_{\X,0}^{1,\infty}(B(x_0,4r))$ and
$\zeta \in C^\infty(t_0,t_0 + T_1)$ be such that $\psi = 1$ on $B(x_0,2r)$, $\zeta=1$ on $(t_0 + T_1/2,t_0 + T_1)$, $\zeta(t_0) = 0$, and
	\begin{equation*}
		0 \leq \zeta_t \leq \frac{C} {T_1} \quad \text{and} \quad |\X \psi| \leq \frac{C}{{\hat r}}.
	\end{equation*}
	Then, using Remark \ref{lem3.2rem} (with $(u-k^{1+\sigma})_-$, $\epsilon = 1$, $\phi = \psi \zeta$) and \eqref{eq_D}, we see that
	\begin{eqnarray}\label{eqgradbdd}
		\int\limits_{Q_1 \cap [u < k^{1+\sigma}]} |\X u|^p d\mu dt&\leq& C \int\limits_{Q_2} \bigg ( {\hat r}^{-p}  (u-k^{1+\sigma})_-^{p} +\frac{1}{T_1} (u-k^{1+\sigma})_-^2 \bigg ) d\mu dt \notag \\
		&\leq& C \bigg ({\hat r}^{-p} k^{(1+\sigma)p} + \frac{1} {T_1} k^{(1+\sigma)2} \bigg ) |Q_2| \notag \\
		&\leq& C \frac{k^{(1+\sigma)p} }{{\hat r}^p \gamma^{p/\hat \delta+1}}|Q_1|.
	\end{eqnarray}
	Let $w = (k^{1+\sigma}-u)_+/k^{1+\sigma}$ and $z = 8(1-w)/\gamma$. Then, using \eqref{eqksigma} we see that
	\begin{equation*}
		\{[z > 1] \cap B(x_0,2{\hat r})\} = \{[u > 8k] \cap B(x_0,2{\hat r})\} > \frac{\gamma}{8} |B(x_0,2{\hat r})|.
	\end{equation*}
	Rewriting \eqref{eqgradbdd} for $z$ we obtain the estimate
	\begin{equation*}
		\int_{Q_1} |\X z|^p d\mu dt %\leq \tilde C \frac{1}{k^{(1+\sigma)p}} \frac{1}{\gamma^p} \frac{k^{(1+\sigma)p} }{{\hat r}^p} \frac{1}{\gamma^{p/\hat \delta+1}}|Q_1| =
		\leq \frac{\tilde C}{\gamma^{p/\hat \delta+p+1}{\hat r}^p} |Q_1|.
	\end{equation*}
	Hence, for some Lebesgue instant $\tau_1$ for $z$ satisfying $\tau_1 \in (t_0+{T_1}/{2},t_0+T_1)$ we have
	\begin{equation*}
		\left ( \fint_{B(x_0,2{\hat r})} |\X z(\cdot, \tau_1)|^p d\mu \right )^{1/p} \leq \frac{C}{\gamma^{(p/\hat \delta+1)/p+1} {\hat r}}.
	\end{equation*}
	Using Lemma \ref{lem_local_clust} with $\hat \gamma = \frac{C}{ \gamma^{(p/\hat \delta+1)/p+1}}$, $\lambda = \delta = 1/2$, and $\alpha = \gamma/8$, we
obtain 	\begin{equation} \label{eqzhalf}
		\{[z(\cdot, \tau_1) > 1/2] \cap B(y_0,2\epsilon {\hat r}) \} > \frac{1}{2} |B(y_0,2\epsilon {\hat r})|,
	\end{equation}
	for some $y_0 \in B(x_0,2{\hat r})$, with
	\begin{equation*}
		\epsilon = \frac{1}{C_1} \gamma^{\frac{\kappa^\ast+1}{\kappa^\ast p}+\frac{p+\hat \delta}{\hat \delta p}+1},
	\end{equation*}
	where $C_1$ is to be chosen.
 Going back to $u$, \eqref{eqzhalf} becomes
	\begin{equation*}
		\{[u(\cdot, \tau_1) > 4k] \cap B(y_0,2 \epsilon {\hat r}) \} > \frac{1}{2} |B(y_0,2 \epsilon {\hat r})|.
	\end{equation*}
	Next we use Lemma \ref{lem_expofpos} with $\delta = 1/2$, $r = 2\epsilon {\hat r}$, $r_0 = 4{\hat r}$ $t_0 = \tau_1$ and $M = k$, to conclude that
	\begin{equation*}
		\inf_{Q'} u \geq k \left ( 4 \epsilon \right )^\theta,
	\end{equation*}
	where $Q' = B(y_0,4{\hat r}) \times (\tau_1 + T/2, \tau_1 + T)$, with $T = \tilde \gamma \left ( k \left ( 2 \epsilon \right )^\theta \right )^{2-p} {\hat r}^p$. Now let
	\begin{equation*}
		\sigma = \frac{1}{\big (\frac{\kappa^\ast+1}{\kappa^\ast p}+\frac{p+\hat \delta}{\hat \delta p}+1 \big )\theta}.
	\end{equation*}
	Then, since $\gamma = 8k^{-\sigma}$ we see that we can take $C_1 = C_1(\data) \geq 1$ large enough such that
	\begin{equation*}
		\frac{1}{C} \leq k(2\epsilon)^\theta \leq 1
	\end{equation*}
	independent of $k$. This completes the proof of the lemma.
\end{proof}

\subsection{The Cold Alternative}

\begin{lemma}\label{lem_5.1} Assume that $\A $ satisfies the structure conditions \eqref{admissiblesym}. Let $\sigma = \sigma(\data)$ be as in Lemma \ref{hot}. There exist positive
constants $T_c$, $M_c \geq 1$ and $\theta_c \in (0,1)$, all depending only on $\data$, such that the following holds. Let $B(x_0,8 \hat r) \Subset \Omega$, $0 < 8 \hat r < R$ , and let $u$ be a weak super-solution to \eqref{eq_theeq} in
an open set containing $\overline{B(x_0, 8{\hat r})} \times [0, 2^p{\hat r}^pT_c]$. Assume that $t= 0$ is a Lebesgue instant for $u$. Assume that,
\begin{equation}\label{6.0}
	\fint_{B(x_0,{\hat r})} u(x,0) d\mu \geq M_c,
\end{equation}
and that
\begin{equation}\label{eq_5.2}
	|\{ x\in B(x_0,2{\hat r}): u(x,t)> 8 k^{1+\sigma}\}|\leq 8 k^{-\sigma}|B(x_0,2{\hat r})|,
\end{equation}
for every $k\geq 8^{1/\sigma}$ and for almost every $0 < t < 2^p {\hat r}^p $. Then
\begin{equation*}
	\inf_{B(x_0,2{\hat r})\times ( {\hat r}^pT_c, 2 {\hat r}^pT_c)}u\geq \theta_c.
\end{equation*}
\end{lemma}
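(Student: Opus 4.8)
The statement will follow once we produce a single Lebesgue instant $t_\ast\in(0,C\hat r^p)$, $C=C(\data)$, and two constants $M_0,\delta_0\in(0,1)$ depending only on $\data$, such that
\begin{equation}\label{coldgoal}
	\big|\{x\in B(x_0,2\hat r):u(x,t_\ast)>M_0\}\big|\ \ge\ \delta_0\,|B(x_0,2\hat r)| .
\end{equation}
Indeed, granting \eqref{coldgoal}, Lemma \ref{lem_expofpos} (with $r\sim\hat r$, $r_0\sim\hat r$, $M=M_0$, $\delta=\delta_0$) yields $u\ge M_0'$, $M_0'=M_0'(\data)$, on a ball containing $B(x_0,2\hat r)$ over a time window of length $\sim (M_0')^{2-p}\hat r^p\sim\hat r^p$; since the resulting lower bound is a fixed constant, the spreading can be re-applied (now with $\delta$ close to $1$) a bounded number of times at the same spatial scale, which extends the time window up to $2\hat r^pT_c$ for any prescribed $T_c=T_c(\data)$, costing only a fixed factor at each step. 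Taking $T_c$ large enough and $\theta_c$ the constant so obtained gives the claim, and the stability of the constants in Lemma \ref{lem_expofpos} as $p\to2$ is inherited.

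\textbf{Step 1: re-reading the cold hypothesis.} Writing $\lambda=8k^{1+\sigma}$ in \eqref{eq_5.2}, the cold assumption is equivalent to a uniform-in-time decay of the distribution function of $u(\cdot,t)$ on $B(x_0,2\hat r)$: there are $\beta=\sigma/(1+\sigma)\in(0,1)$ and $\Lambda_0=\Lambda_0(\data)$ so that, for a.e.\ $t\in(0,2^p\hat r^p)$ and all $\lambda\ge\Lambda_0$,
\[
	\big|\{x\in B(x_0,2\hat r):u(x,t)>\lambda\}\big|\ \le\ C\,\lambda^{-\beta}\,|B(x_0,2\hat r)| .
\]
In particular, enlarging $\Lambda_0$ if necessary, $u(\cdot,t)$ exceeds the fixed level $\Lambda_0$ on at most a hundredth of $B(x_0,2\hat r)$ for a.e.\ $t$, and $\fint_{B(x_0,2\hat r)}u(\cdot,t)^q\,d\mu\le C(\data,q)$ for every $q\in(0,\beta)$ and a.e.\ $t$.

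\textbf{Step 2: persistence of the average and smallness of the energy at moderate heights.} Here the hypotheses $\fint_{B(x_0,\hat r)}u(\cdot,0)\,d\mu\ge M_c$ and ``$t=0$ is a Lebesgue instant'' enter. Viewing $u$ (which we may take $\ge1$, replacing it by $u+1$, a super-solution of a structurally similar equation) as a super-solution and testing \eqref{eq_sol}, after mollification in time and in the sense of Remark \ref{lem3.2rem}, against $\psi(x)^p$ with $\psi\in W_{\X,0}^{1,\infty}(B(x_0,2\hat r))$, $\psi\equiv1$ on $B(x_0,\hat r)$, $|\X\psi|\le C\hat r^{-1}$, one obtains, for $0<t<\tau$,
\[
	\int_{B(x_0,2\hat r)}u(x,t)\,d\mu\ \ge\ M_c\,|B(x_0,\hat r)|\ -\ C\,\hat r^{-1}\!\int_0^t\!\!\int_{B(x_0,2\hat r)\setminus B(x_0,\hat r)}|\X u|^{p-1}\,d\mu\,ds .
\]
The dissipation term is estimated by a Caccioppoli estimate (Lemma \ref{cor_cacc}, Remark \ref{lem3.2rem}, with a suitably chosen — possibly negative — exponent $\xi$), using that by Step 1 the bulk of $|\X u|$ in the annulus sits at moderate heights: choosing $\tau=\tau(\data)\,\hat r^p<2^p\hat r^p$ (here $T_c\ge1$ is used) and then $M_c=M_c(\data)$ large, the dissipation is at most $\tfrac12 M_c|B(x_0,\hat r)|$, whence by the doubling property
\[
	\fint_{B(x_0,2\hat r)}u(x,t)\,d\mu\ \ge\ c_0\,M_c\qquad\text{for a.e.\ }t\in(0,\tau),\qquad c_0=c_0(\data)>0 .
\]
Parallel Caccioppoli estimates, applied to truncations $\min(u,\ell)$ (bounded super-solutions) and to $\min((u-\Lambda_0)_+,\ell)$, show that the time-averaged energy at a fixed moderate height $\ell$ is of order $\ell^p\hat r^{-p}$ and that the energy at heights between $\Lambda_0$ and $\ell$ is controlled by the small measure of the corresponding level set — the quantitative form of ``the Sobolev norm of the super-solution is small''.

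\textbf{Step 3 and the main obstacle.} Fixing a Lebesgue instant $t_\ast\in(0,\tau)$ for $u$ at which all the averaged estimates above hold (possible by Chebyshev in $t$), one combines the lower bound $\fint_{B(x_0,2\hat r)}u(\cdot,t_\ast)\ge c_0M_c$ with the cold decay of Step 1 and the energy bounds of Step 2 — through the parabolic Sobolev and De Giorgi estimates (Lemmas \ref{lem_parabolic:sobolev}, \ref{cor:3.1}, \ref{lem_degiorgi}) and the Poincaré inequality \eqref{eq_P} applied to $u(\cdot,t_\ast)$ minus an appropriate truncation — to deduce that $u(\cdot,t_\ast)$ must exceed a fixed level $M_0=M_0(\data)$ on a fixed fraction $\delta_0=\delta_0(\data)$ of $B(x_0,2\hat r)$, i.e.\ \eqref{coldgoal}; a covering argument, or a relocation of the centre, then puts this into the form directly usable in Lemma \ref{lem_expofpos}. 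This last step is the technical heart: one has to exclude that the whole space-average concentrates in a thin, tall spike, and it is precisely the cold decay together with the super-solution energy estimates that rules this out. Carrying this through in the present generality — with no spatial rescaling and no group structure available, and while tracking the dependence of all constants on $\data$ and their stability as $p\to2$ — is where the Euclidean arguments of \cite{K}, \cite{DiBenedetto} and \cite{DGVbook} must be genuinely modified, and I expect it to be the most delicate part of the proof.
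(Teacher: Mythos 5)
Your overall skeleton coincides with the paper's: persist the initial average forward in time by bounding the dissipation $\iint|\X u|^{p-1}$, locate a Lebesgue instant where $u$ exceeds a fixed level on a fixed fraction of the ball, and then invoke the expansion of positivity (your remark about re-applying the expansion to cover the whole window $(\hat r^pT_c,2\hat r^pT_c)$ is fine, and if anything more explicit than the paper's final sentence). However, the quantitative core is missing, and the route you sketch for it would not close. The cold hypothesis \eqref{eq_5.2} only gives, per time slice, the distribution decay $|\{u>\lambda\}|\le C\lambda^{-\beta}|B|$ with $\beta=\sigma/(1+\sigma)<1$, hence $u(\cdot,t)\in L^q$ uniformly only for $q<\beta<1$. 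Since $\beta<1$, this decay is not integrable at infinity: a slice average $\fint u(\cdot,t_\ast)\,d\mu\ge c_0M_c$ is perfectly compatible with all of the mass sitting in a thin, arbitrarily tall spike, so your Step 3 (Chebyshev/Poincar\'e/De Giorgi at the single instant $t_\ast$, using only the slice-wise $L^q$ bound with $q<1$ and ``energy bounds'') cannot produce a level set of fixed measure fraction. Likewise, in Step 2 the truncation Caccioppoli estimates you invoke only control $\X u$ on sublevel sets $\{u<\ell\}$; they give no information on the gradient over the superlevel (spike) region, however small its measure, so the bound on $\iint|\X u|^{p-1}$ needed to make the average persist is not obtained from Step 1 plus truncations.

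What bridges this gap in the paper is precisely the higher space-time integrability of the super-solution: the reverse H\"older estimate of Lemma \ref{lem_5.3}, combined with the slice-wise bound on $\fint u^\delta$ coming from \eqref{eq_5.2}, yields (Lemma \ref{lem_5.5}) $\fint_0^{2^p\hat r^p}\fint u^q\,d\mu\,dt\le C$ for \emph{all} $q\in(p-2,p-2+\gamma)$, $\gamma=1+(\kappa^\ast-1)/\kappa^\ast>1$. This is the ingredient your outline never produces, and it is used twice: with $q$ slightly above $p-1$ it gives, via H\"older and a Caccioppoli estimate with small negative $\xi$, the dissipation bound $\fint_0^{\hat r^p}\fint|\X u|^{p-1}\le C\hat r^{1-p}$ (Lemma \ref{lem_5.7}), which makes Lemma \ref{lem_5.8} (persistence of the average) work; and with $q>1$ it rules out the spike scenario on the space-time cylinder, since then H\"older/Chebyshev applied to $\frac{M_c}{2}\le\fint\fint u$ forces the set $\{u\ge M_c/4\}$ to occupy a fixed fraction of the cylinder, whence a good time slice exists and Lemma \ref{lem_expofpos} applies. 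Without an argument of this type (a parabolic Moser/reverse-H\"older bootstrap exploiting the super-solution property in time, not just the per-slice measure decay), your Steps 2 and 3 remain assertions rather than proofs, so the proposal as it stands has a genuine gap at exactly the point you yourself flag as the technical heart.
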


\begin{lemma} \label{lem_5.5} Assume that $\A$ satisfies the structure conditions \eqref{admissiblesym} and let $u$ be a weak super-solution to \eqref{eq_theeq} in an open set compactly containing $B(x_0, 2{\hat r}) \times (0, 2^p{\hat r}^p)$, $B(x_0,2\hat r) \Subset \Omega$, with $t=0$ a Lebesgue instant. Set $\gamma=1+({\kappa^\ast-1})/{\kappa^\ast}$. If $u$ satisfies \eqref{eq_5.2} then for all $q\in (p - 2 ,p - 2+\gamma)$ there exists a constant $C = C(\data,\allowbreak q) \geq 1$ such that
\begin{equation}\label{conclusion-5.2}
	\fint^{2^p{\hat r}^p}_0\fint_{B(x_0,3/2{\hat r})}u^q d\mu dt \leq C.
\end{equation}
\end{lemma}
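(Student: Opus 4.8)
\emph{Overview of the plan.} The idea is to first squeeze out of \eqref{eq_5.2} a bound for $u$ at a \emph{small} exponent, and then bootstrap it up to the stated range by the reverse Hölder inequality of Lemma \ref{lem_5.3}. First I would normalise: since $\X(u+1)=\X u$, the function $u+1$ is a weak super-solution of \eqref{eq_theeq} for a symbol structurally similar to $\A$, and \eqref{eq_5.2} survives for $u+1$ (with the constant $8$ replaced by a larger structural one, since $\{u+1>8k^{1+\sigma}\}=\{u>8k^{1+\sigma}-1\}$), so we may assume $u\ge 1$. Putting $\lambda=8k^{1+\sigma}$, hypothesis \eqref{eq_5.2} becomes $|\{x\in B(x_0,2\hat r):u(x,t)>\lambda\}|\le C\lambda^{-p_0}|B(x_0,2\hat r)|$ for a.e.\ $t\in(0,2^p\hat r^p)$ and all $\lambda$ above a structural threshold, where $p_0:=\sigma/(1+\sigma)\in(0,1)$. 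Integrating the distribution function then gives, for every fixed $s_0\in(0,p_0)$, the base estimate $\fint_{B(x_0,2\hat r)}u(\cdot,t)^{s_0}\,d\mu\le C$ for a.e.\ $t$, hence $\fint_0^{2^p\hat r^p}\fint_{B(x_0,2\hat r)}u^{s_0}\,d\mu\,dt\le C$.

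\emph{Crossing the exponent $p-2$.} If $p=2$, the base estimate already lives at an exponent in the admissible range and one passes straight to the boosting step. If $p>2$, the exponent $s_0<p_0$ sits below $p-2$, and the boosting step requires a bound at some exponent $q_1$ slightly \emph{above} $p-2$; producing it is the heart of the matter. Here I would run a De Giorgi iteration on super-level sets: with levels $k_j=k_0(2-2^{-j})$ and cylinders $Q_j$ shrinking to $B(x_0,\tfrac74\hat r)\times(0,2^p\hat r^p)$, I test (the mollified form of) \eqref{eq_sol} against $\zeta(t)\psi_j(x)^p(u-k_j)_+$ to obtain an energy inequality for the super-solution truncation $(u-k_j)_+$, and then combine it with the Sobolev inequality with small support, Lemma \ref{cor:3.1}, whose gain factor $\bigl(|\{u>k_j\}|/|B(x_0,\cdot)|\bigr)^{(\kappa^\ast-1)/(2\kappa^\ast-1)}$ is precisely what absorbs the borderline contribution sitting at exponent $p-2$. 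The initial smallness $|\{u>k_0\}|\le Ck_0^{-p_0}|B(x_0,2\hat r)|\,|T|$ that ignites the iteration is exactly what the first paragraph furnishes, once $k_0$ is chosen structurally large; and the non–scale-invariant lower-order terms carried along the iteration involve only powers of $u$ below $p_0$, hence are again controlled by \eqref{eq_5.2}. The output is $\fint_0^{2^p\hat r^p}\fint_{B(x_0,\frac74\hat r)}u^{q_1}\,d\mu\,dt\le C$ for $q_1$ in a small right neighbourhood of $p-2$.

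\emph{Boosting and covering.} Given any $q\in(p-2,p-2+\gamma)$, I fix $l\ge 1$ so that $s:=p-2+\gamma^{-l}(q-p+2)$ does not exceed the exponent obtained above ($s_0$ if $p=2$, $q_1$ if $p>2$); as $u\ge1$, the bound at the larger exponent implies one at $s$. Then I apply Lemma \ref{lem_5.3} on small sub-cylinders $B(y,r')\times(\tau,\tau+2^p(r')^p)$ with $r'\approx\hat r/16$ and $y\in B(x_0,\tfrac32\hat r)$, chosen so that the enlargement $B(y,8r')\subset B(x_0,2\hat r)$ and $[\tau,\tau+2^p(r')^p]$ stays inside the interval on which $u$ is defined — this is where one uses that the domain \emph{compactly} contains $B(x_0,2\hat r)\times(0,2^p\hat r^p)$, so there is a little room beyond $t=0$ and $t=2^p\hat r^p$. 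On each such cylinder Lemma \ref{lem_5.3} bounds $\fint_0^{(r')^p}\fint_{B(y,\rho r')}u^q\,d\mu\,dt$ by a power of $\fint\fint_{B(y,2r')}u^s\,d\mu\,dt\le C$, with $\rho\in(1,2)$. Finitely many of these cylinders cover $B(x_0,\tfrac32\hat r)\times(0,2^p\hat r^p)$, and summing the local bounds yields \eqref{conclusion-5.2}.

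\emph{Main obstacle.} The delicate step is the second one, i.e.\ traversing the critical exponent $p-2$ when $p>2$: the weighted Caccioppoli estimates (Lemma \ref{lem_cacc}, Lemma \ref{cor_cacc}) all degenerate there, the forbidden value $\xi=-1$ being exactly the weight that would yield the exponent $p-2$, so the smallness of the super-level sets encoded in \eqref{eq_5.2} is genuinely indispensable; moreover, unlike for sub-solutions, the truncation $(u-k)_+$ of a super-solution only enjoys the Caccioppoli inequality with an $L^2$-in-time lower-order term rather than the fully $p$-homogeneous one, so the De Giorgi iteration must be set up to carry these extra terms and to exploit (via Lemma \ref{cor:3.1}) that they concentrate on sets of small measure.
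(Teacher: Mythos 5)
Your first and third steps track the paper closely: the normalization $v=u+1$, the observation that \eqref{eq_5.2} alone gives $\sup_{0<t<2^p\hat r^p}\fint_{B(x_0,2\hat r)}v^{\delta}\,d\mu\le C$ for a small structural exponent (the paper takes $\delta=\sigma/(2(1+\sigma))$, cf. \eqref{eq_5.6}), and the reduction of the whole range $q\in(p-2,p-2+\gamma)$ to one exponent slightly above $p-2$ via Lemma \ref{lem_5.3} and H\"older. The gap is in your second step, which you yourself call the heart of the matter: a De Giorgi iteration on the upper truncations $(u-k_j)_+$ is not available for a \emph{super}-solution. Testing the super-solution inequality (non-negativity of the left-hand side of \eqref{eq_sol}) with $\eta=\zeta\psi_j^p(u-k_j)_+$ yields $\int \A(x,t,u,\X u)\cdot\X u\,\chi_{\{u>k_j\}}\psi_j^p\zeta\,d\mu dt\ \ge\ \cdots$, i.e. a \emph{lower} bound on the energy of $(u-k_j)_+$; the coercivity in \eqref{admissiblesym} then gives no upper bound on $\int|\X(u-k_j)_+|^p\psi_j^p$. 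This is precisely the sign restriction built into Lemma \ref{lem_cacc} and Lemma \ref{cor_cacc}: upward truncations and positive powers ($\xi>0$) require a sub-solution, while for super-solutions only $(u-k)_-$ and exponents $\xi\in(-1,0)$ are admissible. So your caveat that $(u-k)_+$ ``only enjoys a weaker Caccioppoli inequality'' understates the problem: it enjoys none, and the iteration that is supposed to carry you across the critical exponent $p-2$ cannot be started. (It would be fine if $u$ were a sub-solution, as in Lemma \ref{lem_Mos1}, but not here.)

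For comparison, the paper crosses $p-2$ without ever estimating upper level sets: with $\alpha=p-2+\delta$ and $\xi=-1+\delta\in(-1,0)$ (legitimate for super-solutions), Lemma \ref{lem_cacc}/Lemma \ref{cor_cacc} control $\int|\X v|^p v^{\delta-2}\varphi^p$ and $\sup_t\int v^{\delta}\varphi^p$ by $\int v^{p-2+\delta}|\X\varphi|^p$ plus a time-derivative term at exponent $\delta$; the parabolic Sobolev inequality (Lemma \ref{lem_parabolic:sobolev}) then produces $\int v^{p-2+\gamma\delta}$, the sup-in-time factor being exactly $\sup_t\fint v^{\delta}\le C$, which is where \eqref{eq_5.2} enters. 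The one term above $p-2$ on the right, $\int v^{p-2+\delta}|\X\varphi|^p$, is not bounded a priori but absorbed, via Young's inequality $v^{p-2+\delta}\le\epsilon v^{p-2+\gamma\delta}+C_\epsilon$ and the iteration lemma of \cite[Lemma 5.1]{Giaquinta} applied over the radii $7/4\,\hat r\le s<S\le 2\hat r$. If you replace your second step by this absorption argument, the rest of your outline goes through; your covering variant of the boosting step is acceptable (with the usual care near $t=0$, using that $t=0$ is a Lebesgue instant and the compact containment in time), though the paper simply applies Lemma \ref{lem_5.3} on the original cylinder together with H\"older.
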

\begin{proof} Consider the super-solution $v = u + 1$ and set
$\delta = \frac{\sigma}{2 (1+ \sigma)}.$ where $\sigma > 0$ is as in Lemma \ref{hot}. The hypothesis \eqref{eq_5.2} yields that there exists $C= C(\delta) \geq 1$, such that
\begin{equation}\label{eq_5.6}
	\int_{B(x_0,2{\hat r})}v^{\delta}(x, t) d\mu \leq C\Big ( 1+\sum_{j=1}^\infty \int_{8 k^{j(1+\sigma)}<u< 8 k^{(j+1)(1+\sigma)}}u^{\delta}(x, t) d\mu\Big )
\end{equation}
\begin{equation*}
	\le C|B(x_0,2{\hat r})| \Big(1+ \sum_{j=0}^\infty k^{-\sigma j/2} \Big) \leq C |B(x_0,2{\hat r})|,
\end{equation*}
for $k$ sufficiently large and for almost every $0 < t < 2^p {\hat r}^p$.
 In view of Lemma \ref{lem_5.3} and H\"older's inequality it suffices to prove that \eqref{conclusion-5.2} holds for
\begin{equation*}
	q=p-2+\gamma\delta.
\end{equation*}
Let $U(s) = B(x_0, s{\hat r}) \times (0, 2^p{\hat r}^p)$,
and consider the function from Lemma \ref{lem_2.1}, $\varphi \in W_{\X,0}^{1,\infty} (B(x_0, S{\hat r}))$, $0 \leq \varphi \leq 1$,
$\varphi = 1$ in $B(x_0,s\hat r)$ and $|\X \varphi| \leq C{\hat r}^{-1}/(S - s)$,
where $7/4 \leq s < S \leq 2$. Set\footnote{Note that $\alpha (\kappa-1)G=\delta$}
\begin{equation*}
\alpha = p - 2 + \delta, \kappa = \frac{p - 2 + \gamma \delta}{p - 2 + \delta}
.
\end{equation*}
Using Lemma \ref{lem_parabolic:sobolev} (see also \eqref{eq_5.4}), and together with \eqref{eq_5.6}, we see that the following holds
\begin{align}
	\int_{U(s)} v^q d\mu dt &\le
	\int_{U(S)}v^{q} \varphi^{\kappa p} d\mu dt
 	=\int_{U(S)} (u^{\alpha/p} \varphi)^{\kappa p} d\mu dt \notag \\
	&\leq C{\hat r}^p \int_{U(S)} |\X (v^{\alpha/p} \varphi)|^p d\mu dt \Big( \sup_{0<t<2^p {\hat r}^p}\fint_{B(x_0,2{\hat r})} \big(v^{\alpha/p} \varphi \big)^{p(\kappa-1)G} \Big)^{1/G} \notag \\
&\leq C{\hat r}^p \int_{U(S)} |\X (v^{\alpha/p} \varphi)|^p d\mu dt \Big( \sup_{0<t<2^p {\hat r}^p}\fint_{B(x_0,2{\hat r})} v^{\delta} d\mu \Big)^{1/G} \notag \\
&\leq C {\hat r}^{p} \int_{U(S)} |\X (v^{\alpha/p} \varphi)|^p d\mu dt, \label{last-1}
\end{align}
where $C\geq 1$ also depends on $\delta$. Using Lemma \ref{lem_cacc} with $\xi = -1 + \delta$, H\"older's inequality and Young's inequality we obtain
\begin{eqnarray}
	\int_{U(S)}|\X (v^{\alpha/p} \varphi)|^p d\mu dt &\leq& C \int_{U(S)} v^\alpha |\X \varphi|^p d\mu dt+C
	\int_{U(S)} v^{(\frac{\alpha}{p}-1)p}|\X v|^p \varphi^p d\mu dt \notag\\
	&\leq& C\int_{U(S)} v^\alpha |\X \varphi|^p d\mu dt + C\sup_{0<t<2^p{\hat r}^p}\int_{B(X_0,S{\hat r})}v^\delta d\mu \notag \\
	&\leq& \frac{1}{2 C \hat r^p} \int_{U(S)} v^q d\mu dt + C \hat r^{\frac{p\alpha}{q-\alpha}} \int_{U(S)} |\X \varphi|^{\frac{pq}{q-\alpha}} d\mu dt\notag\\
 &&+ C\sup_{0<t<2^p{\hat r}^p}\int_{B(X_0,S{\hat r})}v^\delta d\mu , \label{last-2}
\end{eqnarray}
	again $C \geq 1$ depends on $\delta$.	Using \eqref{eq_5.6}, \eqref{last-1}, \eqref{last-2} we obtain
\begin{equation*}
	\int_{U(s)} v^q d\mu dt \le \frac{1}{2} \int_{U(S)} v^q d\mu dt
	+ C \frac{ |B(x_0,2{\hat r})| {\hat r}^p }{(S-s)^{\frac{p q}{q-\alpha}}}+ C |B(x_0,2{\hat r})|{\hat r}^p.
\end{equation*}
Now, using the ``iteration lemma'' in \cite[Lemma 5.1]{Giaquinta} we deduce that
\begin{equation*}
	\int_{U(7/4)} v^q d\mu dt \le C |B(x_0,2{\hat r})|{\hat r}^p.
\end{equation*}
This proves \eqref{conclusion-5.2} for $q = p-2+\gamma \delta$, with a constant $C$ which also depends on $\delta$.
\end{proof}

\begin{lemma}\label{lem_5.7} In the same hypothesis as Lemma \ref{lem_5.5}, there exists a constant
$C \geq 1$ depending only on $\data$ such that
\begin{equation*}
	\fint^{{\hat r}^p}_0\fint_{B(x_0,\frac{5{\hat r}}{4})}|\X u|^{p-1} d\mu dt \leq C{\hat r}^{1-p}.
\end{equation*}
\end{lemma}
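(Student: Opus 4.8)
The strategy is to estimate $|\X u|^{p-1}$ by interpolating between a Caccioppoli energy for the super-solution $v=u+1\ge1$, taken with a \emph{fixed} negative exponent close to $0$, and the a priori integrability bound
\[
 \fint_0^{2^p{\hat r}^p}\fint_{B(x_0,3{\hat r}/2)}v^{q}\,d\mu dt\le C,\qquad q\in(p-2,\,p-2+\gamma),
\]
established for $v$ in the proof of Lemma \ref{lem_5.5}. Throughout, $v$ is a weak super-solution to an equation whose symbol is structurally similar to $\A$, $v\ge1$, and $|\X v|=|\X u|$. First I would fix $\xi_0=\xi_0(\data)\in(-1,0)$ small enough that $p-1+\xi_0<p-2+\gamma$ and $(1-\xi_0)(p-1)<p-2+\gamma$; this is possible because $\gamma>1$, and because $\gamma$ stays bounded away from $1$ as $p\to2$ the choice --- and hence all the constants below --- is stable in that limit.

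Next I would apply the Caccioppoli inequality of Lemma \ref{lem_cacc} to $v$ with $\xi=\xi_0$, a purely spatial cut-off $\psi\in W_{\X,0}^{1,\infty}(B(x_0,3{\hat r}/2))$ with $\psi=1$ on $B(x_0,5{\hat r}/4)$ and $|\X\psi|\le C/{\hat r}$ (using Remark \ref{lem3.2rem}, so that no $\partial_t$-term appears), and time endpoints $t_1=0$ (a Lebesgue instant by hypothesis) and $t_2\in({\hat r}^p,2{\hat r}^p)$. Since $\frac{p}{\A_0\xi_0(1+\xi_0)}<0$, rearranging leaves only the terminal boundary term $\int_{B(x_0,3{\hat r}/2)}v(\cdot,t_2)^{1+\xi_0}\psi^p\,d\mu$ with an unfavorable sign; choosing $t_2$ to be a Lebesgue instant at which $\fint_{B(x_0,3{\hat r}/2)}v(\cdot,t_2)^{q}\,d\mu\le C$ --- which exists since the average of this quantity over $({\hat r}^p,2{\hat r}^p)$ is $\le C$ by Lemma \ref{lem_5.5} --- and using $v\ge1$ to dominate each power that occurs by $v^{q}$, the right-hand side (which carries $v^{p-1+\xi_0}|\X\psi|^p$) is controlled because $p-1+\xi_0<p-2+\gamma$. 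One thus obtains
\[
 \int_0^{{\hat r}^p}\int_{B(x_0,5{\hat r}/4)}|\X u|^p\,v^{\xi_0-1}\,d\mu dt\;\le\;C\,|B(x_0,{\hat r})| .
\]

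Finally I would write $|\X u|^{p-1}=\bigl(|\X u|^p v^{\xi_0-1}\bigr)^{(p-1)/p}\,v^{(1-\xi_0)(p-1)/p}$ and apply H\"older's inequality with exponents $\tfrac{p}{p-1}$ and $p$, so that
\[
 \int_0^{{\hat r}^p}\int_{B(x_0,5{\hat r}/4)}|\X u|^{p-1}\,d\mu dt\le
 \Bigl(\int_0^{{\hat r}^p}\int_{B(x_0,5{\hat r}/4)}|\X u|^p v^{\xi_0-1}\Bigr)^{\frac{p-1}{p}}
 \Bigl(\int_0^{{\hat r}^p}\int_{B(x_0,5{\hat r}/4)}v^{(1-\xi_0)(p-1)}\Bigr)^{\frac1p}.
\]
The first factor is bounded by $(C|B(x_0,{\hat r})|)^{(p-1)/p}$ by the previous step, while the second is bounded by $(C{\hat r}^p|B(x_0,{\hat r})|)^{1/p}$ because $(1-\xi_0)(p-1)<p-2+\gamma$ and $v\ge1$, invoking Lemma \ref{lem_5.5} once more. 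Multiplying gives $\int_0^{{\hat r}^p}\int_{B(x_0,5{\hat r}/4)}|\X u|^{p-1}\,d\mu dt\le C\,{\hat r}\,|B(x_0,{\hat r})|$, and dividing by ${\hat r}^p|B(x_0,5{\hat r}/4)|$ and using the doubling property \eqref{eq_D} yields the claimed $\fint_0^{{\hat r}^p}\fint_{B(x_0,5{\hat r}/4)}|\X u|^{p-1}\,d\mu dt\le C{\hat r}^{1-p}$.

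The main obstacle is really the bookkeeping with exponents: $\xi_0$ must be a fixed negative number depending only on $\data$, so that the factors $(\A_1 p/(\A_0|\xi_0|))^p$ and $1/|\xi_0(1+\xi_0)|$ in the Caccioppoli estimate remain bounded (and stable as $p\to2$), and yet close enough to $0$ that $(1-\xi_0)(p-1)$ does not exceed $p-2+\gamma$ --- the naive choice $\xi=-1+\delta$ used in Lemma \ref{lem_5.5} would produce the too-large H\"older-conjugate power $(2-\delta)(p-1)$. A secondary point is that one cannot use the $\sup_t$-version of Caccioppoli (Lemma \ref{cor_cacc}) here, since its test function would be forced to vanish at $t=0$ and no sup-in-time bound is available for powers of $v$ exceeding $\delta$; instead one extracts one good time slice $t_2$ for the boundary term, relying on the fact that the sign of that term's coefficient leaves only the $t_2$-contribution, and not the $t=0$-one, to be estimated.
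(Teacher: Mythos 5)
Your proposal is correct and takes essentially the same route as the paper: the identical H\"older splitting $|\X u|^{p-1}=\big(|\X v|^{p}v^{\xi-1}\big)^{(p-1)/p}v^{(1-\xi)(p-1)/p}$ with a small fixed negative $\xi$ (the paper's explicit choice is $\xi=-\tfrac{\kappa^\ast-1}{2\kappa^\ast(p-1)}$, which satisfies exactly your constraint $(1-\xi)(p-1)<p-2+\gamma$), Lemma \ref{lem_cacc} for the energy factor, and Lemma \ref{lem_5.5} for all remaining powers of $v$. The only cosmetic difference is in handling the unfavorable terminal boundary term: the paper uses a time cutoff $\zeta$ vanishing at $t=2^p\hat r^p$ and absorbs it into the $\big(-\partial_t\varphi^p\big)_+$ term, whereas you take a purely spatial cutoff and pick a good terminal Lebesgue instant by a mean-value argument; both are valid.
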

\begin{proof} Set $\xi=-\frac{\kappa^\ast-1}{2\kappa^\ast (p-1)}$, $q= (p-1)(1-\xi)$ and $v = u + 1$.
Choose a test function as in Lemma \ref{lem_2.1}, $\phi \in W_{\X,0}^{1,\infty}(B(x_0, 3/2{\hat r}))$ such that $\phi = 1$ in $B(x_0,5/4\hat r)$. Next choose $\zeta \in C^\infty (0, 2^p{\hat r}^p)$, $0 \leq \zeta \leq 1$ such that $\zeta = 1$ on $(0,\hat r^p)$, $\zeta(2^p \hat r^p) = 0$ and such that for $\varphi = \phi \zeta$
\begin{equation} \label{eq_testbdd}
	|\X \varphi| + \bigg(-\frac{\partial\varphi}{\partial t}\bigg)_+\leq \frac{C}{\hat r^p}.
\end{equation}
Using H\"older's inequality and Lemma \ref{lem_5.5} we first see that
\begin{align} \label{eq_gradbdd}
	\fint^{{\hat r}^p}_0 &\fint_{B(x_0,5/4{\hat r})}|\X u|^{p-1} d\mu dt = \fint^{{\hat r}^p}_0\fint_{B(x_0,5/4{\hat r})}|\X v|^{p-1} v^{-q/p} v^{q/p} d\mu dt \notag \\
	&\leq
\bigg( \fint^{{\hat r}^p}_0\fint_{B(x_0,5/4{\hat r})}|\X v|^p v^{-1+\xi} d\mu dt\bigg)^{(p-1)/p}
\bigg( \fint^{{\hat r}^p}_0\fint_{B(x_0,5/4{\hat r})}v^q d\mu dt\bigg)^{1/p} \notag \\
&\le C \bigg( \fint^{2^p{\hat r}^p}_0\fint_{B(x_0,3/2{\hat r})}|\X v|^p v^{-1+\xi} \varphi^p d\mu dt\bigg)^{(p-1)/p}.
\end{align}
Furthermore, using \eqref{eq_testbdd}, Lemma \ref{lem_cacc} and H\"older's inequality we conclude that
\begin{align} \label{eq_gradbdd2}
\fint^{2^p{\hat r}^p}_0 \fint_{B(x_0,3/2{\hat r})}|\X v|^p v^{-1+\xi} &\varphi^p d\mu dt \notag \\
\leq &C \fint^{2^p{\hat r}^p}_0\fint_{B(x_0,3/2{\hat r})} v^{p-1+\xi}|\X \varphi|^p d\mu dt\notag \\
&+ C\fint^{2^p{\hat r}^p}_0\fint_{B(x_0,3/2{\hat r})} v^{1+\xi}\bigg(
-\frac{\partial\varphi^p}{\partial t}\bigg)_+
d\mu dt \notag \\
\leq &C{\hat r}^{-p}.
\end{align}
Putting together \eqref{eq_gradbdd} and \eqref{eq_gradbdd2} we see that the proof of Lemma \ref{lem_5.7} is complete.
\end{proof}

\begin{lemma} \label{lem_5.8} In the same hypothesis as Lemma \ref{lem_5.5}, there exists a constant $M_c \geq 1$, depending only on $\data$, such that if \eqref{6.0} is satisfied then
\begin{equation*}
	\inf_{0<t<{\hat r}^p}\fint_{B(x_0,5/4{\hat r})}u(x, t) d\mu \geq \frac{M_c}{2}.
\end{equation*}
\end{lemma}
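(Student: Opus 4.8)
The plan is to run the standard ``almost conservation of mass'' argument for super-solutions of the parabolic $p$-Laplacian: test the weak formulation with a purely spatial cut-off, monitor the time evolution of the weighted mass of $u$ over the ball, and absorb the resulting flux term by means of the gradient estimate of Lemma \ref{lem_5.7}.

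I would first fix the cut-off: let $\psi \in C_0(B(x_0,5\hat r/4)) \cap W_{\X,0}^{1,\infty}(B(x_0,5\hat r/4))$ be the function of Lemma \ref{lem_2.1}(2) with $\psi \equiv 1$ on $B(x_0,\hat r)$, $0 \le \psi \le 1$ and $|\X \psi| \le C/\hat r$; note that $\X\psi$ is supported in $B(x_0,5\hat r/4) \setminus B(x_0,\hat r)$, which is exactly the region where Lemma \ref{lem_5.7} provides control. Since $u$ is a super-solution on an open set compactly containing $\overline{B(x_0,2\hat r)} \times [0,2^p\hat r^p]$, hence on a set reaching slightly before $t=0$, I would insert in \eqref{eq_sol} the test functions $\eta(x,t) = \psi(x)\zeta(t)$, with $\zeta$ Lipschitz, $0 \le \zeta \le 1$, equal to $0$ for $t \le -\epsilon$ and for $t \ge t_1+h$, equal to $1$ on $[0,t_1]$, and affine on $[-\epsilon,0]$ and on $[t_1,t_1+h]$. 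Letting $\epsilon, h \to 0$, using that $t=0$ is a Lebesgue instant in order to pass to the initial value $\int_{B(x_0,5\hat r/4)} u(x,0)\psi\,d\mu$, and using Lemma \ref{lem_5.7} to guarantee that $t \mapsto \int \A(x,t,u,\X u)\cdot \X\psi\,d\mu$ is integrable near $0$, one obtains for a.e. $t_1 \in (0,\hat r^p)$
\begin{equation*}
	\int_{B(x_0,5\hat r/4)} u(x,t_1)\,\psi\,d\mu \ \ge\ \int_{B(x_0,5\hat r/4)} u(x,0)\,\psi\,d\mu \ -\ \left| \int_0^{t_1}\int_{B(x_0,5\hat r/4)} \A(x,t,u,\X u)\cdot \X\psi\,d\mu\,dt \right|.
\end{equation*}

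Next I would estimate the three terms. By the structure bound $|\A| \le \A_1 |\X u|^{p-1}$, by $|\X\psi| \le C\hat r^{-1}\chi_{B(x_0,5\hat r/4)}$, by $t_1 \le \hat r^p$, and by Lemma \ref{lem_5.7},
\begin{equation*}
	\left| \int_0^{t_1}\int \A\cdot \X\psi\,d\mu\,dt \right| \ \le\ \frac{C}{\hat r}\int_0^{\hat r^p}\int_{B(x_0,5\hat r/4)} |\X u|^{p-1}\,d\mu\,dt \ \le\ C\,|B(x_0,5\hat r/4)|,
\end{equation*}
where $C$ depends only on $\data$ and, crucially, not on $M_c$. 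Since $\psi \equiv 1$ on $B(x_0,\hat r)$, $u \ge 0$ and \eqref{6.0} holds, one has $\int_{B(x_0,5\hat r/4)} u(x,0)\psi\,d\mu \ge \int_{B(x_0,\hat r)} u(x,0)\,d\mu \ge M_c\,|B(x_0,\hat r)|$; and since $0 \le \psi \le 1$, $\int_{B(x_0,5\hat r/4)} u(x,t_1)\psi\,d\mu \le \int_{B(x_0,5\hat r/4)} u(x,t_1)\,d\mu$. Combining the three displays, dividing by $|B(x_0,5\hat r/4)|$, and using the doubling property $|B(x_0,5\hat r/4)| \le |B(x_0,2\hat r)| \le C_D |B(x_0,\hat r)|$ to compare the two ball volumes, one arrives at a bound of the form
\begin{equation*}
	\fint_{B(x_0,5\hat r/4)} u(x,t_1)\,d\mu \ \ge\ c_0\, M_c \ -\ C , \qquad c_0 = c_0(\data)\in(0,1],
\end{equation*}
valid for a.e. $t_1 \in (0,\hat r^p)$. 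Choosing $M_c = M_c(\data) \ge 1$ large enough that the right-hand side is at least $M_c/2$, and passing the a.e. inequality to the (essential) infimum over $t$, completes the argument.

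The main obstacle I anticipate is not any individual estimate but the careful justification of the integrated identity across the initial time $t=0$: one has to work with the time mollifications $u_h$ of Definition \ref{lebesgue-instant}, use the Lebesgue-instant hypothesis to move the initial value through the limit, and verify that the two ``ramp'' contributions coming from $\zeta$ vanish as $\epsilon, h \to 0$, the vanishing of the one near $t=0$ being precisely where the integrability of the flux furnished by Lemma \ref{lem_5.7} enters. The remaining point requiring some care is purely geometric, namely keeping track of the doubling factor lost in passing from the ball of radius $\hat r$ in \eqref{6.0} to the slightly larger ball $B(x_0,5\hat r/4)$ in the conclusion; this is absorbed into the threshold defining $M_c$.
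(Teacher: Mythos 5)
Your proposal follows essentially the same route as the paper's own proof: test the weak formulation with the spatial cutoff $\varphi$ of Lemma \ref{lem_2.1} equal to $1$ on $B(x_0,\hat r)$, integrate in time from the Lebesgue instant $t=0$, bound the flux term by $\A_1|\X u|^{p-1}|\X\varphi|$ and Lemma \ref{lem_5.7}, and then choose $M_c$ large; you are in fact more explicit than the paper about the time-mollification step. The only caveat (present in the paper's write-up as well) is that the multiplicative loss $c_0=|B(x_0,\hat r)|/|B(x_0,5\hat r/4)|\ge 1/C_D$ cannot literally be absorbed by enlarging $M_c$ when $c_0\le 1/2$ --- enlarging $M_c$ only kills the additive constant $C$ --- so strictly speaking the lower bound one gets is $c_0 M_c-C\ge M_c/(2C_D)$, a harmless adjustment of the constant in the conclusion that propagates without consequence into the proof of Lemma \ref{lem_5.1}.
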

\begin{proof} Consider a test function as in Lemma \ref{lem_2.1}, $\varphi \in W_{\X,0}^{1,\infty} (B(x_0, 5/4{\hat r}))$, with
$0 \le \varphi \le 1$, $\varphi = 1$ in $B(x_0, {\hat r})$ and
$|\X \varphi | \leq C/{\hat r}$. Using the hypothesis that the time $t=0$ is a Lebesgue instant and applying a standard approximation argument (see for instance \cite[Remark 2.4]{K}) to the definition of super-solution one easily obtains that for all Lebesgue instants $t\in (0,{\hat r}^p)$
\begin{align}
	\fint_{B(x_0,5/4{\hat r})}&u(x, t)\varphi(x) d\mu - \fint_{B(x_0,5/4{\hat r})}u(x, 0)\varphi(x) d\mu\notag \\
	\geq& \int_0^t \fint_{B(x_0,5/4{\hat r})}\A(x,t,u,Xu) \cdot X\varphi d\mu dt\notag \\
	\geq& - \A_1 \int_0^t \fint_{B(x_0,5/4{\hat r})} |Xu|^{p-1}|X\varphi|d\mu dt \notag \\
	\geq& -C{\hat r}^{p-1} \fint_0^t \fint_{B(x_0,5/4{\hat r})} |Xu|^{p-1}d\mu dt \ge -C, \label{last-3}
\end{align}
where in the last line we have used Lemma \ref{lem_5.7}.
From \eqref{last-3} it follows that if $M_c$ is taken to satisfy $M_c \geq \frac{2}{3} C$, we obtain the lemma.
\end{proof}

\noindent{\it Proof of Lemma \ref{lem_5.1}}. In view of \eqref{eq_5.2} and Lemma \ref{lem_5.5} we have
\begin{equation} \label{bdd++}
	\fint_0^{2^p{\hat r}^p} \fint_{B(x_0,3/2 {\hat r})} u^q d\mu dt \le C,
\end{equation}
for all $q\in (p-2,p-2+\gamma)$ and $\gamma=1+\frac{\kappa^\ast-1}{\kappa^\ast}$. From \eqref{bdd++}, Lemma \ref{lem_5.8} and H\"older's inequality we see
 that,
\begin{eqnarray}
	\frac{M_c}{2} &\leq& \fint_0^{2^p{\hat r}^p} \fint_{B(x_0, 5/4 {\hat r})} u(x,t) d\mu dt\notag\\
 &\leq& \frac{1}{|Q|} \int_{\{(x,t)\in Q | u\ge M_c/4\} } \!\!\!\!\!\! u(x,t) d\mu dt + \frac{M_c}{4} \notag\\
&\leq& C \Bigg( \frac{| \{(x,t)\in Q | u\ge M_c/4\}|}{ 2^p{\hat r}^p|B(x_0,5/4{\hat r})|}\Bigg)^{\frac{q-1}{q}} + \frac{M_c}{4}, \label{last-4}
\end{eqnarray}
where $Q=B(x_0,5/4{\hat r})\times (0,2^p {\hat r}^p)$. Using \eqref{last-4} we can conclude that there must exists a Lebesgue instant $t_0$ for $u$ satisfying $t_0\in (0,2^p {\hat r}^p)$ and
\begin{equation*}
	\Big\lvert \Big\{ x\in B\Big(x_0,\frac{5}{4} {\hat r}\Big) \Big | u(x,t_0)\ge \frac{M_c}{4}\Big\}\Big \rvert \ge \frac{1}{C} \Big|B\Big(x_0,\frac{5}{4} {\hat r}\Big) \Big|.
\end{equation*}
Lemma \ref{lem_5.1} now follows by choosing $T_c$ sufficiently large so that one can apply the expansion of positivity, Lemma \ref{lem_expofpos}.

\subsection{Final argument}
Let $t_0 < t_1 < t_0+T_0$ be a Lebesgue instant of $u$. Consider the constants $ T_h,T_c, M_c$ from Lemma \ref{hot} and Lemma \ref{lem_5.1}, set $\bar T=\max\{T_h,T_c\}$ and let
\begin{equation*}
%	N =\fint_{B(x_0,r)}\Big[u(x, t_1)+ \Big(\frac{2M_c^{p-2} \bar T r^p}{T_0+t_0-t_1}\Big)^{\frac{1}{p-2}} \Big]d\mu.
	N =\fint_{B(x_0,r)}u(x, t_1) d\mu.
\end{equation*}
Assume that
%Note that with such a choice one has
% \begin{equation*}
% 	N\ge \Big(\frac{2M_c^{p-2} \bar T r^p}{T_0+t_0-t_1}\Big)^{\frac{1}{p-2}} \quad \text{ and } \quad 2\bar Tr^p \le \Big(\frac{N}{M_c}\Big)^{p-2} (T_0+t_0-t_1).
% \end{equation*}
\begin{equation} \label{Nbig}
	N \ge \Big(\frac{2M_c^{p-2} \bar T r^p}{T_0+t_0-t_1}\Big)^{\frac{1}{p-2}},
\end{equation}
since otherwise there is nothing to prove. Next, consider the time-rescaled function
% \begin{equation*}
% 	v(x, \tau) =\frac{M_c}{N} \Bigg[u(x, t_1 + (M_c/N)^{p-2} \tau)+\Big(\frac{2M_c^{p-2} \bar T r^p}{T_0+t_0-t_1}\Big)^{\frac{1}{p-2}}\Bigg].
% \end{equation*}
\begin{equation*}
	v(x, \tau) =\frac{M_c}{N} u(x, t_1 + (M_c/N)^{p-2} \tau).
\end{equation*}
	Note that assumption \eqref{Nbig} implies
	\begin{equation*}
		 2\bar Tr^p \le \Big(\frac{N}{M_c}\Big)^{p-2} (T_0+t_0-t_1),
	\end{equation*}
 and that $v$ is a super-solution of an equation whose symbol $\tilde \A$ which is structurally similar to $\A$, in the cylinder
\begin{equation*}
	B(x_0, 8r)\times(0, 2\bar T r^p) \subset B(x_0, 8r)\times\bigg(0, \bigg(\frac{N}{M_c}\bigg)^{p-2}(T_0 + t_0 - t_1)\bigg),
\end{equation*}
with
\begin{equation*}
	\fint_{B(x_0,r)}v(x, 0) d\mu = M_c.
\end{equation*}
At this point we can invoke the hot and cold alternatives. Indeed, applying Lemma \ref{hot} and Lemma \ref{lem_5.1} to the super-solution $v$, in the cylinder $B(r, 8r)\times(0, 2\bar T r^p)$, we obtain
\begin{equation*}
	\inf_{B(x_0,2r)\times(T_hr^p,2T_hr^p)}v \ge \theta_h \text{ or } \inf_{B(x_0,2r)\times(T_cr^p,2T_cr^p)}v \ge \theta_c.
\end{equation*}
Applying Lemma \ref{lem_expofpos} once more gives
\begin{equation*}
	\inf_{B(x_0,2r)\times(\bar T r^p,2\bar T r^p)}v \ge \min\{\theta_h, \theta_c\}.
\end{equation*}
Returning to the original variables yields the conclusion of Theorem \ref{theorem1.2a}.

\section{Proof of Theorem \ref{theorem1.2b}}
\noindent
%In this section we will assume that we have a $p$-admissible structure $(\M,\mu,d)$ as in Definition \ref{admissible}. Throughout the section we will assume that $\Omega$ is a domain such that the closure of $\Omega$, $K=\bar\Omega$, is compact.. The constants $C_D, C_P, N, R$ of Definition \ref{admissible} and Lemma \ref{lem_2.1} will hence depend on $K$.
We begin by establishing local boundedness of sub-solutions.

\begin{lemma} \label{lem_Mos1}
	Assume that $\A$ satisfies the structure conditions \eqref{admissiblesym}. Let $B(x_0,{\hat r}) \Subset \Omega$, $0 < {\hat r} < R$, let
$u$ be a sub-solution in an open set containing the closure of $Q=B(x_0,{\hat r}) \times (t_0 - T_0, t_0)$, and assume that almost everywhere in $Q$,
	\begin{equation}\label{assump}
		u \geq \bigg ( \frac{{\hat r}^p}{T_0} \bigg )^{\frac{1}{p-2}} > 0.
	\end{equation}
If $\delta_0 > 0$ and $G = \frac{\kappa^\ast}{\kappa^\ast - 1}$, then there exists a constant $C = C(\data) \geq 1$ such that
	\begin{equation*}
		\sup_{B(x_0,\sigma {\hat r})\times (t_0-\sigma^p T_0,t_0)} u \leq \bigg ( \frac{T_0}{{\hat r}^p} \frac{C}{(1-\sigma)^{pG+p}} \fint_Q u^{p-2+\delta} d\mu dt\bigg )^{1/\delta},
	\end{equation*}
	for every $\delta \geq \delta_0$ and $0 < \sigma < 1$.
\end{lemma}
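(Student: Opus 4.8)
The plan is to run a Moser-type iteration on sup-powers of $u$, using the Caccioppoli estimate Lemma \ref{lem_cacc} with positive exponent $\xi$, combined with the parabolic Sobolev inequality Lemma \ref{lem_parabolic:sobolev}. First I would reduce to the case where the cube is normalized: set $\theta = ({\hat r}^p/T_0)^{1/(p-2)}$, so that $u \ge \theta$ a.e. in $Q$ by hypothesis \eqref{assump}, and note that this lower bound is exactly what makes the intrinsic scaling work — the hypothesis $u \ge ({\hat r}^p/T_0)^{1/(p-2)}$ guarantees that $u^{2-p} T_0 \le {\hat r}^p$, which ties the natural time scale of the equation to the geometric scale ${\hat r}^p$. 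I would introduce a sequence of nested cylinders $Q_j = B(x_0, r_j) \times (t_0 - r_j^p T_0/{\hat r}^p, t_0)$ with $r_j \downarrow \sigma {\hat r}$, e.g. $r_j = (\sigma + (1-\sigma)2^{-j}){\hat r}$, along with cutoff functions $\phi_j = \psi_j \zeta_j$ drawn from Lemma \ref{lem_2.1}(2), satisfying $|\X\phi_j|\le C 2^j/((1-\sigma){\hat r})$ and $|\partial_t \phi_j| \le C 2^{jp}/((1-\sigma)^p T_0)$.

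Second, for a sequence of exponents $p_j = p - 2 + \delta \gamma^j$ where $\gamma = 1 + (\kappa^\ast-1)/\kappa^\ast$ and $G$ is defined by $\gamma = 1 + 1/G$ (so $G = \kappa^\ast/(\kappa^\ast-1)$), I would apply Lemma \ref{lem_cacc} to the sub-solution $u$ with $\xi = p_j - p + 1 \ge 1 > 0$, which controls $\int\int |\X u|^p u^{p_j - p}\phi_j^p$ and $\sup_t \int u^{p_j}\phi_j^p$ by the right-hand side involving $u^{p_j}|\X\phi_j|^p$ and $u^{p_j-p+2}(\partial_t\phi_j^p)_+$. Using $u \ge \theta$ to absorb the lower-order term $u^{p_j - p + 2} = u^{p_j} u^{2-p} \le u^{p_j}\theta^{2-p} = u^{p_j} T_0/{\hat r}^p$ into the leading term, and feeding the resulting bound on the energy $\int\int|\X(u^{p_j/p}\phi_j)|^p + \sup_t \int (u^{p_j/p}\phi_j)^p$ into Lemma \ref{lem_parabolic:sobolev} (with $\kappa = \gamma$, exactly as in the computation \eqref{eq_5.4}–\eqref{gga} in Lemma \ref{lem_5.3}), I would obtain a reverse-Hölder-type recursion
\begin{equation*}
	\Bigl(\fint_{Q_{j+1}} u^{p_{j+1}} d\mu dt\Bigr)^{1/p_{j+1}} \le \Bigl(\frac{C^j\, T_0}{{\hat r}^p (1-\sigma)^{pG+p}}\Bigr)^{1/p_j}\Bigl(\fint_{Q_j} u^{p_j}\, d\mu dt\Bigr)^{1/p_j}.
\end{equation*}

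Third, I would iterate: since $p_j \to \infty$ geometrically, the product $\prod_j (C^j \cdots)^{1/p_j}$ converges, and the left side converges to $\sup_{B(x_0,\sigma{\hat r})\times(t_0-\sigma^p T_0, t_0)} u$; tracking the powers of $T_0/{\hat r}^p$ and $(1-\sigma)^{-(pG+p)}$ through the telescoping sum $\sum 1/p_j$, whose total is $1/\delta + O(1)\cdot(\text{bounded})$, yields the claimed bound with final exponent $1/\delta$ and the stated dependence on $(1-\sigma)$. For the statement with general $\delta \ge \delta_0$ rather than just the dyadic values $p - 2 + \delta\gamma^j$, I would start the iteration at level $\delta$ itself (which is legitimate since $\xi = \delta - 1$ could be negative only if $\delta < 1$, in which case one first passes from $\delta$ to some $\delta' \ge 1$ via Hölder and the $u \ge \theta$ bound, or simply notes that for the first step one may use $\xi \ge 1$ by enlarging the exponent and paying a harmless constant — the point being that $\int u^{p-2+\delta} \le \theta^{\delta-\delta'}\int u^{p-2+\delta'}$ when $\delta' \ge \delta$, and $\theta$-powers are controlled since $\theta \le (\fint u^{p-2+\delta})^{1/\delta}$ up to a constant by Jensen). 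The main obstacle I anticipate is the careful bookkeeping of constants so that the single aggregate power $(1-\sigma)^{-(pG+p)}$ emerges — each iteration step contributes a factor $(1-\sigma)^{-p}$ raised to the power $1/p_j$, and one must check that $\sum_j p/p_j$ telescopes to a quantity of the form $(G+1)$ times the leading $1/\delta$, together with verifying the $p$-stability of all constants as $p \to 2$ (which follows since $\gamma$, $G$, $\kappa^\ast$ stay bounded and the geometric series converge uniformly).
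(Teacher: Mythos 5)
Your core iteration is essentially the paper's argument (which follows Kuusi's thesis, \cite[Lemma 4.6]{K1}): Caccioppoli (Lemma \ref{lem_cacc}) with $\xi=p_j-p+1$, the parabolic Sobolev inequality (Lemma \ref{lem_parabolic:sobolev}), absorption of the time term via \eqref{assump} in the form $u^{1+\xi}\le \frac{T_0}{{\hat r}^p}u^{p-1+\xi}$, and exponents $p-2+\delta\gamma^j$ with $\gamma=1+1/G$; that part is sound. The genuine gap is the range $\delta_0\le\delta\le 1$ (and in fact any $\delta$ not bounded away from $1$). For a sub-solution, Lemma \ref{lem_cacc} requires $\xi>0$, and its constants blow up like $|\xi|^{-p}$ as $\xi\to 0$, so the iteration cannot be started at the exponent $p-2+\delta$ when $\xi=\delta-1\le 0$, nor with constants depending only on the structural data when $\delta$ is merely close to $1$. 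Both of your proposed remedies fail: the inequality $\int u^{p-2+\delta}\le\theta^{\delta-\delta'}\int u^{p-2+\delta'}$ (with $\theta=({\hat r}^p/T_0)^{1/(p-2)}$, $\delta'\ge\delta$) bounds the \emph{small}-exponent integral by the \emph{large}-exponent one, i.e. it runs in the wrong direction — what you need is an upper bound for $\fint u^{p-2+\delta'}$ in terms of $\fint u^{p-2+\delta}$, which cannot come from H\"older or from $u\ge\theta$ alone; and the claim that $\theta\le(\fint u^{p-2+\delta})^{1/\delta}$ ``by Jensen'' is false in general, since Jensen only gives $(\fint u^{p-2+\delta})^{1/\delta}\ge\theta^{1+(p-2)/\delta}$, and when $\theta<1$ the extra factor $\theta^{(p-2)/\delta}$ can be arbitrarily small.

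The missing ingredient is exactly why the paper's proof works with the two-parameter family of radii $\sigma{\hat r}\le s<S<{\hat r}$ instead of your fixed nested cylinders between $\sigma{\hat r}$ and ${\hat r}$. One first runs the Moser iteration for a fixed starting exponent with $\xi\ge 1$ (say $\delta'\ge 2$), obtaining the sup-bound over the cylinder of radius $s$ in terms of the integral over the cylinder of radius $S$, with a constant blowing up like a power of $(S-s)^{-1}$; then one interpolates $\fint u^{p-2+\delta'}\le(\sup_{U(S)}u)^{\delta'-\delta}\fint u^{p-2+\delta}$, applies Young's inequality to split off $\tfrac12\sup_{U(S)}u$, and absorbs that term through the iteration-over-radii lemma (as in \cite[Lemma 5.1]{Giaquinta}, or the iteration procedure of \cite[Lemma 4.6]{K1}, which is what the paper invokes); this is also where the dependence on $\delta_0$ enters. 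Without this interpolation-and-absorption step your proposal only proves the lemma for $\delta$ bounded away from $1$ from above, not for all $\delta\ge\delta_0$.
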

\begin{proof}
	This proof is similar to \cite[Lemma 4.6]{K1}, but we point out the relevant changes.
	Let $\sigma {\hat r} \leq s < S < {\hat r}$. We set
	\begin{equation*}
		r_0 = S, \quad r_j = S-(S-s)(1-2^{-j}), \quad j=1,2,\ldots
	\end{equation*}
	and
	\begin{eqnarray*}
		U_j &=& B_j \times \Gamma_j = B(x_0,r_j) \times (t_0-(r_j/{\hat r})^p T_0, t_0), \\
		U(S) &=& B(x_0,S) \times (t_0 - (S/{\hat r})^p T_0, t_0).
	\end{eqnarray*}
	We choose test functions $\psi_{j} \in W_{\X,0}^{1,\infty}(B_j)$, and $\zeta_j \in C^{\infty}(\Gamma_j)$ such that for $\phi_j = \psi_j \zeta_j$ we have
	\begin{equation*}
		0 \leq \phi_j \leq 1, \quad \phi_j=1 \text{ in } U_{j+1}, \phi_j = 0 \text{ on } \partial_p U_j,
	\end{equation*}
	and
	\begin{equation} \label{eqphi}
		|\X \phi_j| \leq \frac{C}{S-s}2^j, \quad \Big \lvert \frac{\partial \phi_j}{\partial t}\Big \rvert \leq \frac{{\hat r}^p}{T_0} \frac{C}{(S-s)^p}2^{pj}.
	\end{equation}
	From Lemma \ref{lem_parabolic:sobolev} we obtain
	\begin{align}
		\fint_{U_j} &\big (u^{\alpha/p} \phi_j^{\beta/p}\big )^{\kappa p} d\mu dt \notag \\
		&\leq C r_j^p \fint_{U_j} \big |\X \big (u^{\alpha/p} \phi_j^{\beta/p}\big )\big |^p d\mu dt \bigg ( \sup_{\Gamma_j} \fint_{B_j} \big (u^{\alpha/p} \phi_j^{\beta/p}\big )^{p(\kappa-1)G} d\mu \bigg )^{\frac{1}{G}} \notag \\
		&\equiv C r_j^p \frac{I_1}{|U_j|}\left ( \frac{I_2}{|B_j|} \right )^{\frac{1}{G}}, \label{eqI1I2}
	\end{align}
	where
	\begin{equation*}
		\alpha = p-1+\xi,\quad \kappa = 1 + \frac{(1+\xi)}{G\alpha},\quad \beta = \frac{p \alpha}{1+\xi},
	\end{equation*}
	for $\xi \geq 1$. First, using Lemma \ref{lem_cacc} we see that
	\begin{align}
		I_2 = \sup_{\Gamma_j} \int_{B_j} u^{1+\xi} \phi_j^p d\mu
		\leq C \frac{1+\xi}{\xi^{p-1}}\int_{U_j} u^{p-1+\xi}|\X \phi_j|^p + u^{1+\xi} \Big \lvert \frac{\partial \phi_j}{\partial t}\Big \rvert \phi_j^{p-1} d\mu dt. \label{eqI2}
	\end{align}
	Next, using that $\phi_j^{\beta} \leq \phi_j^p$, Young's inequality and Lemma \ref{lem_cacc} we deduce that
	\begin{align}
		I_1 &\leq
		C \left ( \alpha^p \int_{U_j} |\X u|^p u^{-1+\xi} \phi_j^{p} d\mu dt+ \int_{U_j} |\X \phi_j|^p u^{p-1+\xi} d\mu dt \right ) \notag \\
		&\leq C \left ( 1 + \frac{\alpha^p}{\xi^p} \right )\int_{U_j} |\X \phi_j|^p u^{p-1+\xi} d\mu dt\notag\\
 &+ C \frac{\alpha^p}{\xi (1+\xi)} \int_{U_j} u^{1+\xi} \Big \lvert \frac{\partial \phi_j}{\partial t} \Big \rvert \phi_j^{p-1} d\mu dt. \label{eqI1}
	\end{align}
Using the assumption in \eqref{assump} we see that almost everywhere in $Q$
	\begin{equation} \label{u1eps}
		u^{1+\xi} \leq \frac{T_0}{{\hat r}^p} u^{p-1+\xi}.
	\end{equation}
	Furthermore, using \eqref{u1eps}, \eqref{eqI1}, \eqref{eqI2} and \eqref{eqphi} we can conclude that
	\begin{align}
		\fint_{U_j} &\big |\X \big (u^{\alpha/p} \phi_j^{\beta/p}\big )\big |^p d\mu dt \bigg ( \sup_{\Gamma_j} \frac{1}{T_j}\fint_{B_j} \big (u^{\alpha/p} \phi_j^{\beta/p}\big )^{p(\kappa-1)G} d\mu \bigg )^{\frac{1}{G}} \notag \\
		&\leq C\Bigg ( \xi^p \fint_{U_j} \bigg (|\X \phi_j|^p u^{p-1+\xi} + u^{1+\xi} \bigg |\frac{\partial \phi_j}{\partial t}\bigg | \bigg ) d\mu dt \Bigg )^\gamma \notag \\
		&\leq C \bigg ( \frac{ 2^{jp} \xi^p}{(S-s)^p} \fint_{U_j} u^{p-1+\xi} d\mu dt \bigg )^\gamma, \label{eqbdd+++}
	\end{align}
	where $\gamma=1+\frac{\kappa^\ast-1}{\kappa^\ast}$. From \eqref{eqI1I2}, \eqref{eqbdd+++} and the definition of $T_j$, we also obtain that
	\begin{eqnarray}
		\int_{U_{j+1}} u^{\kappa \alpha} d\mu dt &\leq&
		|U_j| \fint_{U_j} u^{\kappa \alpha} d\mu dt \notag \\
		&\leq& C r_j^p |U_j| T_j^{\frac{1}{G}}
		 \bigg ( \frac{ 2^{jp} \xi^p}{(S-s)^p} \fint_{U_j} u^{p-1+\xi} d\mu dt \bigg )^\gamma \notag \\
		&=& C r_j^{p\gamma} |U_j| \bigg ( \frac{T_0}{{\hat r}^p} \bigg )^{\frac{1}{G}}\bigg ( \frac{ 2^{jp} \xi^p}{(S-s)^p} \fint_{U_j} u^{p-1+\xi} d\mu dt \bigg )^\gamma. \label{eqkalpha}
	\end{eqnarray}
	Next, using \eqref{eq_D} we see that we can rewrite \eqref{eqkalpha} as
	\begin{align*}
		\fint_{U_{j+1}} u^{\kappa \alpha} d\mu dt &\leq C r_j^{p\gamma} \bigg ( \frac{r_j}{r_{j+1}} \bigg )^{N+p} \bigg ( \frac{T_0}{{\hat r}^p} \bigg )^{\frac{1}{G}}\bigg ( \frac{ 2^{jp} \xi^p}{(S-s)^p} \fint_{U_j} u^{p-1+\xi} d\mu dt \bigg )^\gamma \\
		&\leq C \Bigg ( \frac{r_j^{N+p}}{r_{j+1}^{(N+p)/\gamma}} \bigg ( \frac{T_0}{{\hat r}^p} \bigg )^{\frac{\gamma-1}{\gamma}} \frac{ 2^{jp} \xi^p}{(S-s)^p} \fint_{U_j} u^{p-1+\xi} d\mu dt \Bigg )^\gamma.
	\end{align*}
	 Now the $r_j$,$r_{j+1}$, $T_0/{\hat r}^p$ dependence is exactly as in \cite[(4.8)]{K1} with the difference that now $\gamma=1+\frac{\kappa^\ast-1}{\kappa^\ast}$. This implies that we can proceed with the iteration procedure as in \cite[Lemma 4.6]{K1} obtain the conclusion of the lemma.
\end{proof}

\begin{lemma} \label{lem_weak:Harnack:sub}
	Let $u$ be as in Lemma \ref{lem_Mos1}. There exists a constant $C = C(\data) \geq 1$ such that
	\begin{equation*}
		\sup_{B(x_0,\sigma {\hat r})\times (t_0-\sigma^p T_0,t_0)} u \leq \frac{T_0}{{\hat r}^p} \frac{C}{(1-\sigma)^{p+p^2 G}} \left ( \sup_{t_0-T_0 < t < t_0} \fint_{B(x_0,{\hat r})} u\ d\mu\right )^{p-1},
	\end{equation*}
	with $G = \kappa^\ast/(\kappa^\ast - 1)$,
	for every $0 < \sigma < 1$.
\end{lemma}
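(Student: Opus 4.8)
\medskip

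\noindent The plan is to combine the Moser iteration of Lemma~\ref{lem_Mos1} with an energy estimate for $u$, along the lines of \cite{K1} (see also \cite{DGVbook}). It is convenient to first normalize: since $u\ge(\hat r^p/T_0)^{1/(p-2)}$, the function $w(x,s)=(\hat r^p/T_0)^{-1/(p-2)}\,u\big(x,\,t_0+\tfrac{T_0}{\hat r^p}(s-\hat r^p)\big)$ is a weak sub-solution, on $B(x_0,\hat r)\times(0,\hat r^p)$, of an equation whose symbol is structurally similar to $\A$, it satisfies $w\ge 1$, and the asserted estimate for $u$ is equivalent to the same estimate for $w$ with $T_0$ replaced by $\hat r^p$. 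So we may assume $T_0=\hat r^p$ and $u\ge 1$, and must bound $\sup_{B(x_0,\sigma\hat r)\times(t_0-\sigma^p\hat r^p,t_0)}u$ by $C(1-\sigma)^{-(p+p^2G)}M^{p-1}$, where $M=\sup_{t_0-T_0<t<t_0}\fint_{B(x_0,\hat r)}u\,d\mu$.

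\medskip

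\noindent First I would invoke Lemma~\ref{lem_Mos1}, rescaled between nested radii $\sigma\le s<S\le 1$ (replacing $\hat r$ by $S\hat r$ and $T_0$ by $S^p\hat r^p$, so the ratio $T_0/\hat r^p$ is untouched), with the exponent $\delta=1$; this is the choice for which $1/\delta=1$, so that the outcome is the clean inequality $\sup_{Q_s}u\le C(S-s)^{-(pG+p)}\fint_{Q_S}u^{p-1}\,d\mu dt$, with a single power of $T_0/\hat r^p$ in front, where $Q_s=B(x_0,s\hat r)\times(t_0-s^p\hat r^p,t_0)$. It then remains to control $\fint_{Q_S}u^{p-1}\,d\mu dt$, up to an absorbable multiple of $\sup_{Q_S}u$ to a power strictly less than $1$, by $C(1-S)^{-c}M^{p-1}$. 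This is precisely where hypothesis \eqref{assump} is used: exactly as in the proof of Lemma~\ref{lem_Mos1}, the pointwise bound $u^{1+\xi}\le(T_0/\hat r^p)u^{p-1+\xi}$ makes the $\partial_t\varphi$ contributions in the Caccioppoli estimates of Lemma~\ref{lem_cacc} and Lemma~\ref{cor_cacc} of the same order as the $|\X\varphi|^p$ contributions; one then runs a reverse-H\"older iteration --- built from the parabolic Sobolev inequality of Lemma~\ref{lem_parabolic:sobolev} applied to $u^{\alpha/p}\varphi^{\beta/p}$ together with those Caccioppoli estimates, in the spirit of the proof of Lemma~\ref{lem_5.3} --- in which the quantity $\sup_t\fint_{B(x_0,\hat r)}u\,d\mu$ enters only through the $\sup_t$-term of Lemma~\ref{lem_parabolic:sobolev}, the exponents being arranged so that this term is a power of $M$.

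\medskip

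\noindent The resulting inequality $\sup_{Q_s}u\le\tfrac12\sup_{Q_S}u+C(S-s)^{-(p+p^2G)}M^{p-1}$, valid for all $\sigma\le s<S\le 1$, is then closed by the iteration lemma \cite[Lemma~5.1]{Giaquinta} (legitimate because $u$ is already known to be bounded on $Q_S$, $S<1$, by Lemma~\ref{lem_Mos1}), and undoing the normalization yields the statement. I expect the main difficulty to be closing this iteration with the \emph{sharp} power $p-1$ of $M$, especially when $p\ge 3$: the naive route --- pulling $(\sup_{Q_S}u)^{p-2}$ out of $\fint_{Q_S}u^{p-1}$ and trying to absorb it --- produces a super-linear, non-convergent recursion, so one must run the reverse-H\"older reduction carefully, keeping the power of $\sup u$ negligible at each step and recovering the full power $p-1$ of $M$ only from the Sobolev $\sup_t$-term. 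This is the concrete manifestation here of the time/space scaling rigidity for $p>2$ mentioned in the introduction, and it is also what forces the exponent $p+p^2G$ to exceed the exponent $p+pG$ of Lemma~\ref{lem_Mos1}.
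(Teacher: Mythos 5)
Your overall skeleton (normalize by intrinsic scaling so that $T_0=\hat r^p$ and $u\ge 1$, apply Lemma \ref{lem_Mos1} between nested cylinders, reduce matters to an inequality of the form $\sup_{Q_s}u\le\tfrac12\sup_{Q_S}u+C(S-s)^{-(p+p^2G)}M^{p-1}$, and close with the iteration lemma of \cite[Lemma 5.1]{Giaquinta}) is consistent with what the paper intends: its proof consists of the doubling property together with the argument of \cite[Lemma 4.9]{K1}, which has exactly this architecture, and your remarks on the role of \eqref{assump} (making the $\partial_t\varphi$ terms comparable to the $|\X\varphi|^p$ terms) and on the need for local boundedness before invoking Giaquinta's lemma are correct. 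The problem is that the decisive middle inequality is precisely where the whole content of the lemma lies, and your proposal leaves it as an assertion (``the exponents being arranged so that this term is a power of $M$''), while the one concrete mechanism you point to does not work as described.

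The gap is this: a reverse-H\"older chain ``in the spirit of Lemma \ref{lem_5.3}'' is a \emph{super-solution} tool. In Lemma \ref{lem_5.3} the Caccioppoli estimate of Lemma \ref{cor_cacc} is invoked with $\xi=\alpha_j-p+1\le 0$, which is admissible only for super-solutions; here $u$ is a sub-solution, so Lemmas \ref{lem_cacc} and \ref{cor_cacc} are available only for $\xi>0$, i.e.\ they control $\int|\X u|^pu^{\xi-1}\phi^p$ only when the associated moment exponent $\alpha=p-1+\xi$ is strictly larger than $p-1$. In particular, a Sobolev--Caccioppoli step aimed at $\fint u^{p-1}$ itself would require $\xi=-1/G<0$, and all the intermediate exponents between $1$ (where $\fint_Q u\le CM$ would start the chain) and $p-1$ are equally inaccessible; so with your choice $\delta=1$ the quantity $\fint_{Q_S}u^{p-1}$ cannot be reached by that chain, and the only remaining bound, $\fint u^{p-1}\le(\sup_{Q_S} u)^{p-2}M$, is exactly the superlinear one you rule out. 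There is also a quantitative obstruction you do not address: if the reduction leaves any residual factor $(\sup_{Q_S}u)^{\theta}$ with $0<\theta<1$, absorbing it by Young's inequality converts a factor $M^{p-1-\theta}$ into $M^{(p-1-\theta)/(1-\theta)}$, which is strictly larger than $M^{p-1}$ for every $\theta>0$ when $p>2$. Hence the sharp power $p-1$ forces an exact matching of exponents, which in turn forces running Lemma \ref{lem_Mos1} with $\delta$ of size comparable to $p-1$ (so that the moment $p-2+\delta$ lies in the range $\alpha>p-1$ where the sub-solution Caccioppoli applies), descending by finitely many parabolic Sobolev steps --- each extracting a factor $M^{1/G}$ from the $\sup_t$-term --- to an exponent just above $p-1$, and only then interpolating against $\sup u$ and applying Young; this finite descent with its exact bookkeeping is the substance of \cite[Lemma 4.9]{K1} and is where the exponent $p+p^2G$ comes from. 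As written, your proposal does not supply this step, so the proof is incomplete at its crux.
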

\begin{proof}
	By using \eqref{eq_D} and Lemma \ref{lem_Mos1}, we can argue as in \cite[Lemma 4.9]{K1} to obtain the conclusion of the lemma.
\end{proof}
Theorem \ref{theorem1.2b} now follows as a simple consequence of Lemma \ref{lem_weak:Harnack:sub}.
%\bibliographystyle{acm}
%\bibliography{test}
\def\cprime{$'$}

\end{document}